\theoremstyle{plain}
\newtheorem{theorem}{Theorem}[section]
\newtheorem{lemma}[theorem]{Lemma}
\newtheorem{proposition}[theorem]{Proposition}
\newtheorem{corollary}[theorem]{Corollary}
\theoremstyle{definition}
\theoremstyle{remark}
\newtheorem{rem}[theorem]{Remark}
\numberwithin{equation}{section}
\numberwithin{figure}{section}
\newcommand{\bd}{\begin{description}}
\newcommand{\ed}{\end{description}}
\newcommand{\ba}{\begin{array}}      \newcommand{\ea}{\end{array}}
\newcommand{\bc}{\begin{center}}     \newcommand{\ec}{\end{center}}
\newcommand{\be}{\begin{enumerate}}  \newcommand{\ee}{\end{enumerate}}
\newcommand{\beq}{\begin{eqnarray}}  \newcommand{\eeq}{\end{eqnarray}}
\newcommand{\beQ}{\begin{eqnarray*}} \newcommand{\eeQ}{\end{eqnarray*}}
\newcommand{\bi}{\begin{itemize}}    \newcommand{\ei}{\end{itemize}}
\newcommand{\ov}{\overline}
\newcommand{\nbpt}{18}
\newcommand{\figtotext}[3]{\begin{array}{c}\includegraphics{#3}\end{array}}
\newcommand{\rbis}{\figtotext{\nbpt}{\nbpt}{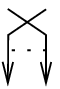}}
\newcommand{\ra}{\rightarrow}
\newcommand{\hra}{\hookrightarrow}
\newcommand{\xra}{\xrightarrow}
\newcommand{\inv}{\mathcal{\iota}}
\newcommand{\Teich}{Teichm\"uller }
\newcommand{\Poin}{Poincar\'e }
\def\ra{{\rightarrow}}
\def\bZ{{\mathbb Z}}
\def\bQ{{\mathbb Q}}
\def\inv{{^{-1}}}
\def\T1g{{\mathcal T}_{g,1}}
\def\M1g{{\mathcal M}_{g,1}}
\def\I1g{{\mathcal I}_{g,1}}
\newcommand\Mor{\operatorname{Mor}}
\newcommand\Aut{\operatorname{Aut}}
\newcommand\Pt{\mathfrak{Pt}}
\newcommand\mc{\mathcal}
\newcommand{\ftt}[1]{\begin{array}{c}\includegraphics{#1}\end{array}}
\renewcommand{\bigcirc}{\!\!\!\ftt{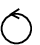}\!\!\!}
\newcommand{\mb}{\mathbf}
\begin{document}
%
%
%
\title[Finite type Invariants and Fatgraphs ]{Finite type  Invariants and Fatgraphs}
\author[J.E. Andersen]{J{\o}rgen Ellegaard Andersen}
\address{Center for the Topology and Quantization of Moduli Spaces\\
Department of Mathematics\\
Aarhus University\\
DK-8000 Aarhus C, Denmark\\}
\email{andersen{\char'100}imf.au.dk}
\author[A.J. Bene]{Alex James Bene}
\address{Departments of Mathematics \\
University of Southern California\\
Los Angeles, CA 90089\\
USA\\}
\email{bene{\char'100}usc.edu}
\author[J.B. Meilhan]{Jean-Baptiste Meilhan}
\address{Institut Fourier \\
Universit\'e Grenoble 1 \\
38402 St Martin d'H\`eres\\
France}
\email{jean-baptiste.meilhan@ujf-grenoble.fr}
\author[R.C. Penner]{R. C. Penner}
\address{Departments of Mathematics and Physics/Astronomy\\
University of Southern California\\
Los Angeles, CA 90089\\
USA\\
~{\rm and}~Center for the Topology and Quantization of Moduli Spaces\\
Department of Mathematics\\
Aarhus University\\
DK-8000 Aarhus C, Denmark\\}
\email{rpenner{\char'100}math.usc.edu}
\begin{abstract}
We define an invariant $\nabla _G(M)$ of pairs $M,G$, where $M$ is a
3-manifold obtained by surgery on some framed link in
the cylinder $\Sigma\times I$, $\Sigma$ is a connected surface with at least
one boundary component, and $G$ is a fatgraph spine of $\Sigma$.
In effect, $\nabla_G$ is the composition with the $\iota_n$ maps of Le-Murakami-Ohtsuki of the link invariant of 
Andersen-Mattes-Reshetikhin computed relative to choices determined by the fatgraph $G$; this provides a
basic connection between 2d geometry and 3d quantum topology.
For each fixed $G$, this invariant is shown to be universal for homology cylinders,
i.e., $\nabla _G$ establishes an isomorphism from an appropriate vector space $\overline{\mathcal H}$
of homology cylinders to a certain algebra of Jacobi diagrams.  Via composition
$\nabla_{G'}\circ\nabla _G^{-1}$ for any pair of fatgraph spines $G,G'$ of $\Sigma$, we derive
 a representation of the Ptolemy groupoid, i.e., the combinatorial model for the fundamental path groupoid of Teichm\"uller space, as a group of automorphisms of this algebra. 
  The space $\overline{\mathcal H}$ comes equipped 
 with a geometrically natural product induced by stacking cylinders on top of one another
 and furthermore supports related operations which arise by gluing a homology 
 handlebody to one end of a cylinder or to another homology handlebody.
 We compute how $\nabla _G$ interacts with all three operations explicitly in terms of natural 
 products on Jacobi diagrams and certain diagrammatic constants.  Our main result gives an explicit extension of the 
 LMO invariant of 3-manifolds to the Ptolemy groupoid in terms of these operations, and this groupoid extension nearly 
 fits the paradigm of
a TQFT.  We finally re-derive the Morita-Penner cocycle representing the first Johnson homomorphism
using a variant/generalization of $\nabla_G$.
\end{abstract}

\maketitle

\section{Introduction}\label{sec:intro}

In \cite{LMO}, Le, Murakami and Ohtsuki constructed an invariant $Z^{LMO}(M)$
of a closed oriented $3$-manifold $M$ from the Kontsevich integral $Z$ (see $\S$\ref{sect:Kontsevich}) of a framed 
link with $k$ components, where $Z$ takes values in the  space ${\mathcal A}(\bigcirc^k)$ of Jacobi diagrams with core $\bigcirc^k$,   a 
collection of $k$ oriented circles  (see $\S$\ref{defjac}).  
The Kontsevich integral $Z$ is universal among rational-valued Vassiliev invariants, i.e., any 
other factors through it.
The LMO invariant $Z^{LMO}(M)\in \mathcal{A}(\emptyset)$ takes values in Jacobi diagrams with empty core and  arises as 
a suitably normalized post-composition of $Z$ with mappings $$\iota_n: \mathcal{A}(\bigcirc^k)\ra 
\mathcal{A}(\emptyset),$$
which are of key importance for LMO and effectively ``replace circles by sums of  trees'' (see \S\ref{sec:iota}).
The LMO invariant is universal among rational-valued finite type invariants of integral and of rational homology spheres.

In \cite{AMR}, Mattes, Reshetikhin and the first-named author defined  a universal Vassiliev  invariant of links (see 
$\S$\ref{amr} for a partial review) in the product manifold $\Sigma\times I$, where $\Sigma=\Sigma_{g,n}$ is a fixed 
oriented surface of genus $g\ge 0$ with $n\ge 1$ boundary components and $I$ is the closed unit interval, which 
generalizes the Kontsevich integral.  Actually, the determination of this AMR invariant depends on a certain 
decomposition of the surface $\Sigma$ into polygons.

In \cite{penner,penner93,Penner04}, the last-named author described an ideal cell decomposition of the decorated Teichm\"uller 
space of a bordered surface in terms of marked fatgraphs $G$ embedded in $\Sigma$ (see $\S$\ref{reviewfatgraphs} for 
the definitions) and introduced the Ptolemy groupoid $\Pt (\Sigma)$ and its canonical presentation in terms of 
Whitehead moves (see $\S$\ref{ptolemy} for both the moves and the presentation).  A key point is that the natural 
quotient of $\Pt(\Sigma )$ contains the mapping class group $MC(\Sigma )$ of $\Sigma$ as the stabilizer of any object.
A more speculative point (discussed further in $\S$\ref{conclusions}) is that the Whitehead moves which
generate $\Pt(\Sigma)$ may themselves be interpreted as triangulated cobordisms of triangulated surfaces

In fact, the specification of a marked fatgraph $G$ in $\Sigma$ suffices to determine a polygonal decomposition (see $\S$\ref{polydec} for this 
construction) as required for the definition of the AMR invariant.  This is a basic connection between decorated Teichm\"uller theory and finite 
type invariants which we exploit here.

Indeed, we define an invariant $ \nabla _G$ (see $\S$\ref{defnabla} for the definition and Theorem \ref{thm:inv} for 
its invariance)  taking values in the space $\mc{A}_h$ of $h$-labeled Jacobi diagrams without strut components (see 
$\S$\ref{defjac} for the definitions), where $h=2g+n-1$ is the rank of the first homology group of $\Sigma$ if 
$\Sigma=\Sigma_{g,n}$ has genus $g$ and $n$ boundary components.  
 Specifically, our invariant is defined for any ``cobordism'' $M$, i.e., $\nabla _G(M)$ is defined for any 
 3-manifold $M=(\Sigma \times I)_L$ arising  from Dehn surgery on a framed link $L\subset \Sigma \times I$ and for any 
 marked fatgraph $G$ in $\Sigma$.  In fact, the fatgraph $G$ determines not only the polygonal decomposition necessary 
 for an AMR invariant but also other choices which are required for our new invariant (see $\S$\ref{kpair} for these 
 other choices called systems of  ``latches'' and ``linking pairs'').

The invariant $\nabla _G$ is defined in analogy to $Z^{LMO}$ in the sense that it arises as a suitably normalized 
post-composition of the AMR invariant determined by $G$ with $\iota_n$, so the AMR invariant (actually, a weakened forgetful version of it) plays for 
us the role of the Kontsevich integral in LMO.  We show (see Theorem \ref{thm:univ}) that $\nabla_G$ is universal for 
so-called ``homology cylinders'', which arise for surgeries along a particular class of links called claspers (see 
$\S$\ref{sec:thmunivnabla} for the definitions of homology cylinders and claspers).

Since $\nabla_G$ is universal for homology cylinders, it induces an isomorphism
$$\nabla_G: \overline{\mathcal{H}}_\Sigma\ra \mathcal{A}_h, $$
where $\overline{\mathcal{H}}_\Sigma$ is a quotient of the vector space freely generated by homology cylinders over 
$\Sigma$ (see $\S$\ref{fti} for the precise definition of $\overline {\mathcal H}_\Sigma$).
 It is this manifestation of universality that has useful consequences for the Ptolemy groupoid $\Pt (\Sigma )$ since 
 given two marked fatgraphs $G$ and $G'$ in $\Sigma$, there is the composition
$$\nabla _{G'}\circ \nabla _G^{-1}: {\mathcal A}_h\to{\mathcal A}_h.$$
For essentially formal reasons, this turns out to give a representation
$$\xi :\Pt(\Sigma _{g,1})\to \Aut({\mathcal A}_h)$$
of the Ptolemy groupoid in the algebra automorphism group of ${\mathcal A}_h$.

There are several well-known and geometrically natural operations on $\overline{\mathcal H}$.  Firstly, there is the 
``stacking'' induced by gluing homology cylinders top-to-bottom.  Secondly,  given a homology cylinder over the 
once-bordered surface $\Sigma_{g,1}$ and a genus $g$ homology handlebody (see \S\ref{handlebodies} for the definition), 
we can take their ``shelling product'' by identifying the boundary of the latter with the bottom of the former.  
Thirdly and finally, we can glue two homology handlebodies along their boundaries to get a closed 3-manifold in the 
spirit of Heegaard decompositions which is called the ``pairing'' between the homology handlebodies (see 
\S\ref{stdpairings} for details on all three operations).

In $\S$\ref{pairings} we explicitly define three algebraic maps
$$
\bullet:\mathcal{A}_{2g}\times \mathcal{A}_{2g}\ra \mathcal{A}_{2g},~~
\star:\mathcal{A}_{2g}\times \mathcal{A}_{g}\ra \mathcal{A}_{g},~~
{\rm and}~~   \langle \; , \; \rangle:\mathcal{A}_{g}\times \mathcal{A}_{g}\ra
\mathcal{A}(\emptyset),
$$
which respectively correspond (under conjugation with a normalized version of $\nabla_G$ explained in 
\S\ref{sec:reduced}) to the stacking product, the shelling product, and the pairing (as proved in Theorem 
\ref{thmpairings}).  Furthermore, these operations are computed in terms of  a basic  ``concatenation product'' $\odot$ 
(see $\S$\ref{sec:odot}) with three particular tangles $T_g,R_g,S_g$ (see $\S$\ref{pairings} and Figure \ref{tangles} 
for the definitions of these tangles) respectively corresponding to the three operations; this gives a purely 
diagrammatic interpretation and scheme of computation for each operation.

Our penultimate result relies on a groupoid representation
  \[ \rho: \Pt(\Sigma_{g,1})\ra \mc{A}_{2g}, \]
defined by combining our invariant with a representation from \cite{ABP}, to extend the LMO invariant of integral 
homology spheres to the Ptolemy groupoid in the following sense.
Let $f$ be an element of the Torelli group of $\Sigma_{g,1}$ and let
 $$ G\xra{W_1} G_1\xra{W_2}...\xra{W_k}G_k=f(G) $$
be a sequence of Whitehead moves representing $f$ in the sense of decorated Teichm\"uller theory (see 
$\S$\ref{ptolemy}).  Our  result then states  that the LMO invariant of the integral homology $3$-sphere $S^3_f$ 
obtained by the Heegaard construction via $f$  is given by
\[  Z^{LMO}(S^3_f)=\left\langle v_0 , \left(\rho(W_1)\bullet
\rho(W_2)\bullet \dotsm  \bullet \rho(W_k)\right)\star v_0
\right\rangle\in \mc{A}(\emptyset), \]
where $v_0$ is an explicit diagrammatic constant (see Theorem \ref{thm:lmo} for the precise statement) and the 
operations are fully determined diagrammatically as discussed before.  This formalism shows the sense in which the LMO 
invariant extends to the Ptolemy groupoid as a kind of weakened version of TQFT; whereas the Ptolemy groupoid has not 
made contact with the LMO invariant previously, similar TQFT phenomena and remarks are reported in \cite{MO,CL}.

Finally (in \S\ref{sec:lifttau1}), we use our invariant (actually, a variation/generalization $\nabla_G^{I_g}$ of $\nabla_G$, which takes values in Jacobi diagrams with core $2g$ intervals and depends upon a  ``general system of latches'' $I_g$, in order to associate to the Whitehead move $G\xra {W}G'$ the quotient
$\mc{J}(W)=\nabla_G^{I_g}(\Sigma _{g,1}\times I)/\nabla_{G'}^{I_g}(\Sigma _{g,1}\times I)$) and
define a representation
$$ \mc{J}^\mathsf{Y}\colon \Pt(\Sigma_{g,1})\ra  \Lambda^3 H_1(\Sigma_{g,1};\mathbb{Q}) $$
which coincides with that defined by Morita and Penner \cite{MP} to give a canonical cocycle extension of the first 
Johnson homomorphism \cite{johnson}; see \cite{BKP} for analogous
cocycles extending all of the higher Johnson homomorphisms.
It thus seems reasonable to expect that higher-order calculations should provide a corresponding formula for 
the second Johnson homomorphism and, in light of \cite{morita-casson}, also for the Casson invariant.
In fact, one motivation for the present work was to investigate whether the known extensions to the Ptolemy groupoid of 
the Johnson homomorphisms \cite{MP,ABP,BKP} might be special cases of a more general extension of $Z^{LMO}$, cf.\ 
\cite{GL,Habegger,massuyeau}.

We have learned here that $Z^{LMO}$ indeed extends to the Ptolemy groupoid,
and in particular have derived an explicit purely diagrammatic  extension of $Z^{LMO}$ which is ``nearly a TQFT, '' 
but whose formulas are not particularly simple or natural largely owing to their  dependence upon certain combinatorial 
algorithms from \cite{ABP}.

On the other hand by a related construction (in $\S$\ref{general}), we have in the context of finite type invariants derived an elegant and 
natural Ptolemy groupoid representation which may give a simpler extension of $Z^{LMO}$.  We expect that there is a 
precursor for this in the early days of development of  \cite{MP,ABP,BKP}, where explicit unpleasant formulas were 
ultimately replaced by simpler and more conceptual ones; see $\S$\ref{conclusions} for a further discussion.

\vskip .2cm

\noindent{\it Standard Notation.}~
We shall fix a compact connected and oriented surface $\Sigma=\Sigma_{g,n}$ of genus $g\ge 0$ with $n\ge 1$ boundary 
components, fix a basepoint $p\in \partial\Sigma$ and let $h:= 2g+n-1$ denote the rank of $H_1(\Sigma;\mathbb{Z})$.  We 
shall often write $1_\Sigma=\Sigma\times I$, where $I=[0,1]$.


\section{Definitions}\label{sec:definitions}

\subsection{Jacobi diagrams} \label{jacobi}

We first recall the spaces of diagrams in which the Kontsevich, LMO and our new invariants take values.

\subsubsection{Definitions} \label{defjac}
A \emph{Jacobi diagram} is a finite graph with only univalent and trivalent vertices, or a so-called  ``uni-trivalent'' 
graph, such that each trivalent vertex is equipped with a cyclic ordering of its three incident half-edges. In other 
words, a Jacobi diagram is exactly a uni-trivalent {``fatgraph''} as discussed separately in $\S$\ref{reviewfatgraphs}. 
The \emph{Jacobi degree} or simply \emph{$J$-degree} of a Jacobi diagram is half its number of  vertices.

Let $S=\{s_1,...,s_m\}$ be some finite linearly ordered set and be $X$ a $1$-manifold, where we tacitly assume that $X$ 
is compact and oriented and that its components come equipped with a linear ordering.    A Jacobi diagram $G$  {\it 
lies on} ($X,S$) if the set of univalent vertices of $G$ partitions into two disjoint sets, where elements of one of
these sets are labeled by elements of $S$, and elements of the other  are disjointly embedded in $X$; $X$ is called the 
{\it core} of the Jacobi diagram.
As usual \cite{BN,O} for figures, we use bold lines to depict the $1$-manifold $X$ and dashed ones to depict the Jacobi 
diagram (though fatgraphs will sometimes also be depicted with bold lines), and we take the cyclic ordering at a vertex 
given by the counter-clockwise orientation in the plane of the figure, which is used to determine the ``blackboard 
framing''.

Let $\mathcal{A}(X,S)$ denote the $\mathbb{Q}$-vector space generated by Jacobi diagrams on $(X,S )$, subject to the 
AS, IHX and STU relations depicted in Figure \ref{relations}.
\begin{figure}[!h]
\includegraphics{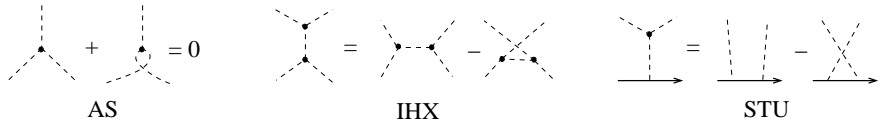}
\caption{The relations AS, IHX and STU. } \label{relations}
\end{figure}
Consider the respective vector subspaces $\mathcal{A}_k(X,S)$ and $\mathcal{A}_{\le k}(X,S)$ generated by Jacobi 
diagrams lying on $(X,S)$ of $J$-degree $k$ and $\le k$, with respective projections of $x\in \mathcal{A}(X,S)$ denoted  
$x_{k}$ and $x_{\le k}$.
Abusing notation slightly, let  $\mathcal{A}(X,S)$ furthermore denote the $J$-degree completion of $\mathcal{A}(X,S)$ 
with its analogous projections to $\mathcal{A}_k(X,S)$ and $\mathcal{A}_{\le k}(X,S)$.  The empty diagram in  
$\mathcal{A}(X,S)$ is often denoted simply $1$. 

We shall primarily be interested in certain  specializations of this vector space:

\vskip .2cm

\leftskip .5cm

\noindent $\bullet$~When  $S=\emptyset$, we write simply $\mathcal{A}(X)=\mathcal{A}(X,\emptyset)$.
If $X$ is the disjoint union of $m$ copies of $S^1$, respectively, $m$ copies of the unit interval, then 
$\mathcal{A}(X)$ is also respectively denoted by $\mathcal{A}(\bigcirc^m)$ and\ $\mathcal{A}(\uparrow^{m})$.  There is 
an obvious surjective ``closing map'' $$\pi: \mathcal{A}(\uparrow^m)\rightarrow \mathcal{A}(\bigcirc^m)$$ which 
identifies to a distinct point the boundary of each component of $X$.

\vskip .1cm

\noindent $\bullet$~ When $X=\emptyset$, we write simply $\mathcal{B}(S)=\mathcal{A}(\emptyset,S)$, called the vector space of 
\emph{$S$-colored Jacobi diagrams}, and when $S=\{1,...,m\}$, we write $\mathcal{B}(m)=\mathcal{B}(S)$, called the 
space of \emph{$m$-colored Jacobi diagrams}.

\leftskip=0ex

\vskip .2cm

\noindent In fact, ${\mathcal A}(X,S)$ has the structure of a Hopf algebra
provided $X=\emptyset, \bigcirc$, or $\uparrow^m$, cf.\ the next 
section.

When $S=\{ s_1,\ldots ,s_m\}$ is a linearly ordered set with cardinality $m$, there is a standard \cite{BN} graded 
isomorphism
  \begin{equation*}
    \chi_S : \mathcal{B}(S)\to \mathcal{A}(\uparrow^m),
  \end{equation*}
called the Poincar\'e--Birkhoff--Witt isomorphism, which maps a diagram to the average of all possible combinatorially 
distinct ways of attaching its $s_i$-colored vertices to the $i^{th}$ interval, for $i=1,\ldots , m$.
When $S=\{1,...,m\}$, we simply write $\chi=\chi_S$;
more generally, given a $1$-manifold $X$ with a submanifold $X'\subset X$ which is isomorphic to and identified with $\uparrow^m$, we have 
the isomorphism
\[ \chi_{X',S}: \mathcal{A}(X- X',S) \to \mathcal{A}(X), \]
which arises by applying $\chi_S$ only to the $S$-labeled vertices.

The \emph{internal degree} or \emph{$i$-degree} of a Jacobi diagram is its number of trivalent vertices.  We call a 
connected Jacobi diagram of $i$-degree zero a {\it strut}, and we denote by $\mathcal{B}^\mathsf{Y}(m)$ the vector space 
generated by $m$-colored Jacobi diagrams without strut components modulo the AS and the IHX relations.  As these two 
relations (unlike STU) are homogeneous with respect to the internal degree, $\mathcal{B}^\mathsf{Y}(m)$ is graded by the 
$i$-degree.  The $i$-degree completion is also denoted $\mathcal{B}^\mathsf{Y}(m)$ and is canonically isomorphic to the 
$J$-degree completion.  

In the rest of this paper, we shall use the simplified notation $\mc{A}_m=\mc{B}^\mathsf{Y}(m)$.
\subsubsection{Operations on Jacobi diagrams} \label{opjac}
There are several basic operations \cite{BN} on Jacobi diagrams as follows:

First of all, disjoint union of  $1$-manifolds $X_1$ and $X_2$ gives a tensor product
$$\otimes\colon  \mc{A}(X_1)\times \mc{A}(X_2)\ra \mc{A}(X_1\sqcup X_2),$$
where the linear ordering on the components of $X_1\sqcup X_2$ is the lexicographic one with components of $X_1$ 
preceding those of $X_2$.  Secondly, if $V_i\subseteq \partial X_i$ for $i=1,2$, and $V_1$ is identified with the reversal of $V_2$ as 
linearly ordered sets of points to form a new $1$-manifold $X$ from $X_1$ and $X_2$, then the {\it stacking product} 
$$\cdot\colon \mc{A}(X_1)\times \mc{A}(X_2)\ra \mc{A}(X)$$ arises by gluing together pairs of identified points and 
combining Jacobi diagrams in the natural way.

Suppose that $Y\subseteq X$ is a connected component of a $1$-manifold $X$.
$Y^{(n)}$ denotes the union of $n$ ordered parallel copies of $Y$.
The \emph{comultiplication map} $\Delta_Y: \mathcal{A}(X)\to  \mathcal{A}(Y^{(2)}\cup X- Y)$
is defined as follows.  Given a diagram $D\in \mathcal{A}(X)$ with $c$ univalent vertices on $Y$, replace $Y$ by 
$Y^{(2)}$ and take the sum of all $2^c$ possible ways of distributing these $c$ vertices to the components of 
$Y^{(2)}$.   More generally, we can recursively define maps
 \[ \Delta^{(n)}_Y: \mathcal{A}(X)\to  \mathcal{A}(Y^{(n)}\cup X- Y) \]
by $\Delta^{(n)}_Y=\Delta_{Y^{(n-1)}_1}\circ\Delta^{(n-1)}_{Y}$, where $Y^{(n-1)}_1$ denotes the first copy of $Y$ in 
$Y^{(n-1)}$.

If $Y \subseteq X$ is a union of components, let  $\ov{Y}$ denote the result of reversing the orientation on $Y$.  The 
\emph{antipode map} $$S_Y: \mathcal{A}(X)\to  \mathcal{A}(\ov{Y}\cup X- Y)$$
is defined by $S_Y(D)=(-1)^c\ov{D}$, where the diagram
$D\in \mathcal{A}(X)$ contains  $c$ univalent vertices attached to $Y$, and $\ov{D}$ arises from $D$  by reversing the orientation of 
$Y$.

\subsection{The Kontsevich integral of framed tangles} \label{sect:Kontsevich}
Let $M$ be a compact connected oriented $3$-manifold whose boundary is endowed with an identification to the boundary 
of the standard cube $C:=[0,1]^3$, and let $X$ be a $1$-manifold possibly with boundary. A \emph{tangle with core $X$ 
in $M$} is a proper embedding of $X$ in $M$ such that all boundary points of $X$ lie on the segments $[0,1]\times
\frac{1}{2}$ in the upper and lower squares $[0,1]^2\times \{1\}$ and $[0,1]^2\times \{0\}$ of $C$.
We shall identify such an embedding with the (isotopy class relative to the boundary) of its image.
A \emph{framed tangle} is a tangle together with a non-vanishing normal vector field.
A \emph{q-tangle} is a framed tangle enhanced with a ``bracketing'', i.e., a consistent collection of
parentheses on each of the naturally linearly ordered sets of boundary points
in the segments $[0,1]\times \{\frac{1}{2}\} \times \{\varepsilon\}$; $\varepsilon=0,1$.

We define two operations on q-tangles in $C$ as follows.
The tensor product $T\otimes T'$ of two q-tangles $T$ and $T'$
is obtained by horizontal juxtaposition and natural bracketing, with $T$ to the left of $T'$ (and reparametrization of 
the ambient cube).
If the upper end of $T$ coincides with the lower end of $T'$, i.e., they coincide as bracketed sets of dots, then
the composition $T\cdot T'$ is obtained by stacking $T'$ on top of $T$ (and reparameterizing the ambient cube).

A fundamental fact \cite{O} is that any q-tangle in $C$ can be (non-uniquely) decomposed as a composition of tensor 
products of (oriented) copies of the elementary q-tangles $I$, $X_{\pm}$, $C_{\pm}$ and $\Lambda_{\pm}$ of Figure 
\ref{elementary}  together with those obtained by orientation-reversal on certain components.
\begin{figure}[!h]
\bc
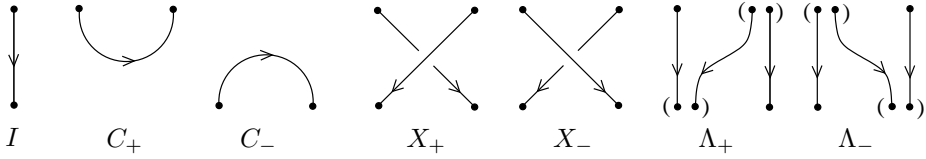
\caption{The elementary q-tangles $I$, $C_{\pm}$, $X_{\pm}$ and $\Lambda_{\pm}$.  } \label{elementary}
\ec
\end{figure}

The \emph{framed Kontsevich integral} $Z(T)$ of a $q$-tangle $T$ with core $X$ in the standard cube $C$ lies in the 
space $\mathcal{A}(X)$ of Jacobi diagrams \cite{BN,O}.  Insofar as $Z(T \cdot T')=Z(T)\cdot Z(T')$ and $Z(T\otimes 
T')=Z(T)\otimes Z(T')$, for any two tangles $T,T'$, it is enough to determine
$Z$ on any tangle by specifying its values on the elementary  q-tangles of Figure \ref{elementary}
by the fundamental fact.
We set $Z(I)=1\in \mathcal{A}(\uparrow)$, and
\begin{equation}\label{Z1}
 \quad Z(C_\pm)=\sqrt{\nu},
 \end{equation}
where $\nu\in \mathcal{A}(\bigcirc)\simeq \mathcal{A}(\uparrow)$ is the Kontsevich integral of the $0$-framed unknot 
(computed in \cite{BNGRT2}).

Recall that $\nu$ is invariant under the antipode map 
and that projecting away the non-strut components of
$\chi(\nu)$ produces zero.

Define
\begin{equation}\label{Z2}
Z(X_\pm)=\textrm{exp}\big( \frac{\pm 1}{2} \rbis \big) = 1 + \sum_{k=1}^{\infty} \frac{(\pm 1)^k}{2^k k!}\big( \rbis
\big)^{k},
\end{equation}
where the $k^{th}$ power on the right-hand side denotes the diagram with $k$ parallel dashed chords.
and set
\begin{equation}\label{Z3}
 Z(\Lambda_\pm)=\Phi^{\pm 1},
\end{equation}
where $\Phi\in \mathcal{A}( \uparrow^3)$ is the choice of an associator (see for example \cite[Appendix D]{O}).

While there are many associators that one may choose to define the Kontsevich integral, we shall
restrict our choice to an even associator (see \cite[$\S$3]{LMpar} for a definition), which necessarily satisfies
  \begin{equation} \label{equevenass}
Z\left( r(T) \right) = r\left( Z(T) \right),
\end{equation}
for any q-tangle $T$ and for any mirror reflection $r$ of its planar projection with respect to any horizontal or 
vertical line \cite{LMpar};
moreover, if $T^{(k)}_i$ is obtained by taking $k$ parallel copies of the $i^{th}$ component of a q-tangle $T$, then we 
have \begin{equation}\label{evencable}
Z(T^{(k)}_i)=\Delta^{(k)}_i(Z(T)).
\end{equation}

\subsection{Fatgraphs}\label{reviewfatgraphs}

A \emph{fatgraph} is a finite graph endowed with a ``fattening'', i.e., a cyclic ordering on each set of half-edges 
incident on a common vertex.  When depicting a fatgraph in a figure, the fattening is given by the counter-clockwise 
orientation in the plane of the figure.
A fatgraph $G$ determines a corresponding ``skinny surface'' with boundary in the natural way, where
polygons of $2k$ sides corresponding to $k$-valent vertices of $G$ have alternating bounding arcs identified in pairs 
as determined by the edges of $G$.
We shall be primarily concerned with the case where such graphs are connected and uni-trivalent with only one univalent 
vertex, and by a slight abuse of terminology, we shall call such a fatgraph a \emph{bordered fatgraph}.   The edge 
incident on the uni-valent vertex of a bordered fatgraph is called the \emph{tail}.

Suppose that $\mb{e}$ is an oriented edge that points towards the vertex $v$ of $G$.  There is a succeeding oriented 
edge $\mb{e}'$ gotten by taking the oriented edge pointing away from $v$ whose initial half edge follows the terminal 
half edge of $\mb{e}$.  A sequence of iterated successors gives an ordered collection of oriented edges starting from 
any oriented edge called a \emph{boundary cycle} of $G$, which we take to be cyclically ordered and evidently 
corresponds to
a boundary component of the associated skinny surface.  We shall call any subsequence of a boundary cycle of $G$ a 
\emph{sector}.  By a \emph{once bordered fatgraph}, we mean a bordered fatgraph with only one boundary cycle, which 
canonically begins from the tail.

Thus, the oriented edges of any once bordered fatgraph $G$ come in a natural linear ordering, namely, in the order of 
appearance in the boundary cycle starting from the tail.
For a connected bordered fatgraph $G$, we can also linearly order the oriented edges by defining the \emph{total 
boundary cycle} as follows.  Let the total boundary cycle begin at the tail and continue until it returns again to the
tail.  If every oriented edge has not yet been traversed, then there is a first oriented edge $\mb{e}$ in this sequence 
such that the oppositely oriented edge $\mb{\bar e}$ has not yet been traversed by connectivity.  We then extend  the 
total boundary cycle by beginning again at $\mb{\bar e}$ and continuing as before until the boundary cycle containing 
$\mb{\bar e}$ has been fully traversed.   By iterating this procedure, we eventually traverse every oriented edge of 
$G$ exactly once.   According to our conventions for figures, the total boundary cycle is oriented with $G$ on its 
left.

Finally, a \emph{marking} of a bordered fatgraph $G$ in a surface $\Sigma=\Sigma_{g,n}$ of genus $g$ with $n>0$ 
boundary components with basepoint $p$ in its boundary,  is a homotopy class of embeddings $G\hra \Sigma$ such that the 
tail of $G$ maps to a point $q\neq p$ on the same component of the boundary of $\Sigma_{g,n}$ as $p$ and the complement 
$\Sigma- G$ consists of a disc (corresponding to the boundary component containing $p$) and $n-1$ annuli (corresponding 
to the remaining boundary components).
The relative version \cite{Penner04} of decorated Teichm\"uller theory \cite{penner} shows that the natural space of all marked 
fatgraphs in a fixed bordered surface is identified with a trivial bundle over its Teichm\"uller space.

\subsection{The polygonal decomposition associated to a fatgraph}\label{polydec}

By a \emph{bigon}, \emph{square} or \emph{hexagon} in a surface $\Sigma$ with boundary, we mean a (topologically) 
embedded closed disc $D^2\hra \Sigma$ such that the intersection $D^2\cap \partial \Sigma$ is the union of one, two or 
three disjoint closed intervals, respectively, called the \emph{bounding edges};  the closures of the components of the 
remainder of $\partial D^2$ are called the \emph{cutting edges}.

Given a marked  bordered fatgraph  $G\hra\Sigma$, its corresponding skinny surface is naturally diffeomorphic to 
$\Sigma$ itself thus providing a polygonal decomposition
\[ \Sigma=B \cup (\cup_i S_i) \cup (\cup_j H_j), \]
where each trivalent vertex corresponds to a hexagon $H_j$, each non-tail edge corresponds to a square $S_i$, and the 
tail corresponds to a bigon $B$, such that the intersection of any two of these components consists of a (possibly 
empty) union of cutting edges.   We refer to any $S_i\times I$ or $B\times I$ as a \emph{box} and to the box $B\times 
I$ associated to the the tail of $G$ as the \emph{preferred box}.  The faces of the boxes corresponding to cutting
edges are called the {\it cutting faces}.  This decomposition of $\Sigma$ is the \emph{polygonal decomposition 
associated to the fatgraph} $G$ marking in $\Sigma$ and is denoted $P_G$.

Such a decomposition $P_G$ of $\Sigma$ into 2-, 4-, and 6-gons, together with a specification of one bounding edge for 
each hexagon, provides sufficient data to define the AMR invariant of \cite{AMR}, which is discussed in the next 
section.  We call the specified bounding edge of each hexagon (as well as the corresponding sector of $G$)  its 
\emph{forbidden sector}.  One can check that for any choice of forbidden sectors for $P_G$, any framed link $L$ in 
$1_{\Sigma}$  can be isotoped  in $1_{\Sigma}$ and endowed with a bracketing of its intersection with the cutting faces 
so that:

\begin{itemize}
\item  For each square $S_i$, the tangle $L^S_i:=L\cap (S_i\times I)$ is a q-tangle in the cube $S_i\times I\cong
    C$ as in $\S$\ref{sect:Kontsevich}.  Similarly, the tangle $L^B:=L\cap (B\times I)$ is a q-tangle in $B\times
    I$.
\item For each hexagon $H_j$, the tangle $L^H_j:=L\cap (H_j\times I)$ is a ``trivial'' q-tangle in the sense that:
    there are no crossings of strands of $L^H_j$;
no strand of $L^ H_j$ connects the two edges of $\partial H_j$ adjacent to the forbidden sector;
the bracketing of the intersection of $L$ with the cutting face opposite the forbidden sector is the concatenation
of the bracketings for the other two cutting faces in the natural way, cf.\ Figure \ref{2torus}.
\item Pairs of bracketings corresponding to the two sides of a single cutting face must coincide (as follows from
    their definition).
\end{itemize}

If a link satisfies these conditions, then we say that it is in \emph{admissible position} with respect to the 
polygonal decomposition $P_G$ associated to the marked fatgraph $G$ in $\Sigma$.
An example is given in Figure \ref{2torus}, where we have labeled each forbidden sector by $\ast$.

\begin{figure}[!h]
\includegraphics{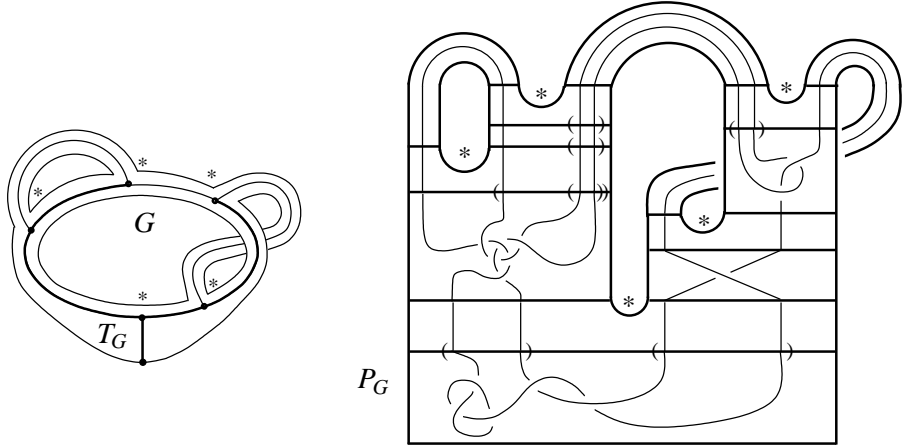}
\caption{A fatgraph $G$ marked in the twice-punctured torus $\Sigma_{1,2}$, the maximal tree $\tau_G$, and a knot in
admissible position with respect to the polygonal decomposition $P_G$.} \label{2torus}
\end{figure}

In fact, a marked fatgraph $G$ in a surface $\Sigma$ not only determines the required polygonal decomposition $P_G$ of 
$\Sigma$ as already discussed, it furthermore determines a collection of forbidden sectors as follows.

By the \emph{greedy algorithm} of \cite{ABP}, there is a canonical maximal tree $\tau_G$ in $G$ built by traversing the 
total boundary cycle of $G$ starting from the tail and ``greedily'' adding every traversed edge  to $\tau_G$ provided the 
resulting graph is simply connected.  See Figure \ref{2torus}.  Note that during this process, the corresponding subset 
of $\tau_G$ is always a connected tree, and that the tail and all vertices of $G$ are included in $\tau_G$.  See 
\cite[$\S$3]{ABP} for a detailed exposition of the greedy algorithm as well as its other manifestations and 
applications.

Given a bordered fatgraph $G$, its  \emph{generators} are the edges in the complement $X_G=G - \tau_G$ of the maximal tree 
$\tau_G$.  Note that  there is a natural linear ordering on the set of generators, and each generator comes equipped with 
an orientation, where the ordering and orientation are determined as the first encountered during the traversal of the 
total boundary cycle.

By general principles about maximal trees, each vertex $v$ of $G$ is connected to the tail by a unique embedded path in 
$\tau_G$, and this path contains a unique edge of $G$ incident to $v$.     As each hexagon of the decomposition of 
$\Sigma$ corresponds to a vertex of $G$, we may define the forbidden sector of a hexagon to be the one opposite the 
edge contained in the path initiating from the corresponding vertex.  See Figure \ref{2torus}.

\begin{lemma}\label{comb}
For any marked fatgraph $G$ in $\Sigma$, the specified forbidden sectors and the corresponding polygonal decomposition 
$P_G$ have the property that any link $L$ in $\Sigma\times I$ can be isotoped so that it intersects each box except the 
preferred one in a trivial q-tangle.
\end{lemma}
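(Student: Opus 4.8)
The plan is to show that, starting from any link $L\subset\Sigma\times I$, a sequence of isotopies puts $L$ into admissible position with respect to $P_G$ and the forbidden sectors determined by the greedy maximal tree $\tau_G$; the conclusion about trivial q-tangles in every non-preferred box is then immediate from the definition of admissible position. The key geometric input is that each box $S_i\times I$ or $B\times I$ is a cube, and each hexagon-times-$I$ piece $H_j\times I$ is also a $3$-ball whose boundary is partitioned into the three cutting faces and three ``bounding'' faces lying over $\partial\Sigma$; a link may be isotoped within $\Sigma\times I$ freely, in particular across the cutting faces.

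First I would fix, for each hexagon $H_j$, its forbidden sector as prescribed by $\tau_G$: it is the bounding edge opposite the unique tree-edge on the embedded $\tau_G$-path from the corresponding vertex $v_j$ to the tail. The first step is a general-position/transversality argument: isotope $L$ so that it meets every cutting face transversally in finitely many points and meets none of the codimension-two corners, so that $L\cap(H_j\times I)$ and $L\cap(S_i\times I)$ are genuine tangles with endpoints on the cutting faces. Second, I would work hexagon by hexagon to make each $L^H_j=L\cap(H_j\times I)$ \emph{trivial} in the stated sense. Because $H_j\times I$ is a ball and the link is unconstrained there, one can push all strands away from the forbidden bounding edge: remove all crossings of $L^H_j$ by a sequence of $R2$/$R3$-type isotopies supported in $H_j\times I$, and push strands so that none connects the two cutting faces adjacent to the forbidden sector — any such strand can be slid ``around'' through the interior toward the opposite cutting face, possibly increasing the number of intersection points with that opposite face. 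The orientation of this pushing is exactly what the choice of forbidden sector encodes, and this is where the greedy tree is used: the forbidden sectors are chosen coherently (opposite the edges pointing ``toward the tail'') so that the pushing directions in adjacent hexagons are compatible and the procedure does not run in circles.

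Third, once all the $L^H_j$ are trivial, I would choose bracketings. On each cutting face opposite a forbidden sector, take the bracketing to be the concatenation of the bracketings on the other two cutting faces of that hexagon; on the cutting faces adjacent to forbidden sectors, the bracketing is free, but it must be made consistent across shared faces. Traversing the hexagons along $\tau_G$ from the tail outward (a tree, hence no cycles), one propagates a well-defined bracketing to every cutting face: at each step the new hexagon has at most one already-bracketed adjacent cutting face, so its remaining bracketings — and in particular the concatenated one on the face opposite its forbidden sector — are determined without conflict. This yields bracketed intersections matching on both sides of every cutting face. Fourth, with bracketings in place, each $L^S_i=L\cap(S_i\times I)$ and $L^B=L\cap(B\times I)$ is automatically a q-tangle in a cube as in \S\ref{sect:Kontsevich}: nothing further need be done, since a framed tangle in a cube with bracketed boundary \emph{is} a q-tangle. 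Hence $L$ is in admissible position with respect to $P_G$, and in particular $L$ meets every box other than the preferred one $B\times I$ in a trivial q-tangle — here ``trivial'' in the weakest sense needed, namely that it is a genuine q-tangle; for the boxes $S_i\times I$ there is no further triviality claimed, only that the hexagonal pieces between them are trivial, which is what makes the local Kontsevich integrals compose.

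The main obstacle I expect is the coherence of the pushing/bracketing procedure across the hexagons: a priori, forcing $L^H_j$ to be trivial (no strand joining the two cutting faces flanking the forbidden sector) while simultaneously keeping bracketings consistent on shared faces could conflict between neighboring hexagons. The resolution — and the reason the statement is true — is that the greedy tree $\tau_G$ organizes the hexagons into a rooted tree with the preferred box at the root, so both the strand-pushing and the bracketing can be carried out by a single outward sweep along $\tau_G$, each hexagon being processed after exactly one of its neighbors (its parent), so there is never a two-sided constraint to satisfy. All of this is already implicit in the paragraph preceding the lemma, where it is asserted that ``one can check'' a link can be put in admissible position for \emph{any} choice of forbidden sectors; the lemma is the specialization to the greedy choice, and the proof is just the bookkeeping of that sweep.
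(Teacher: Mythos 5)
There is a genuine gap: you have proved the wrong statement. Your proposal establishes only that $L$ can be put in \emph{admissible position} with respect to $P_G$ (transversality, trivializing the hexagon pieces $L^H_j$, propagating consistent bracketings along $\tau_G$), and you then assert that the conclusion ``is immediate from the definition of admissible position,'' adding explicitly that ``for the boxes $S_i\times I$ there is no further triviality claimed.'' But admissible position permits arbitrary q-tangles in the square boxes $S_i\times I$; the lemma asserts something strictly stronger, namely that after a further isotopy the link meets \emph{every} box other than the preferred bigon box $B\times I$ in a trivial q-tangle, i.e.\ all of the nontrivial topology of $L$ is concentrated in the single preferred box. This stronger conclusion is exactly what is used later (e.g.\ in Lemma \ref{reducedform} and in the universality argument), so it cannot be weakened away. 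In effect you have re-proved the ``one can check'' assertion in the paragraph preceding the lemma, not the lemma itself.

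The missing idea is the pushing argument organized by the greedy maximal tree. The paper distinguishes the G-boxes (coming from the generators $X_G$) from the T-boxes (coming from edges of $\tau_G$). First, any nontrivial tangle sitting in a G-box is isotoped out of that box, through an adjacent hexagon, and into a neighboring T-box; the choice of forbidden sector (opposite the tree edge pointing toward the tail) guarantees this can be done without creating arcs parallel to the forbidden sector, so the hexagons stay trivial. Second, any nontrivial tangle in a T-box is slid into the neighboring T-box that is closer to the tail along $\tau_G$, again an isotopy permitted by the forbidden-sector choice, and iterating this rooted-tree sweep deposits all nontrivial tangles into the preferred box. Your observation that $\tau_G$ organizes the hexagons into a rooted tree with the preferred box at the root is the right structural picture, but you use it only to propagate bracketings; the lemma requires you to use it to transport the tangles themselves down to the root.
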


\begin{proof}
For the purposes of this proof, we distinguish between the T-boxes, coming from the edges of $\tau_G$, and the G-boxes, 
coming from the edges of $X_G$.
To begin the isotopy, for any G-box $S_x$ containing a non-trivial tangle $L_x^S$, isotope $L_x^S$ out of $S_x$ in 
either direction through the adjacent hexagon and then into an adjacent T-box in a way which avoids producing arcs 
parallel to the forbidden sector of
the hexagon.  This results in a link which is trivial in all G-boxes.  Next, for any T-box $S_t$ containing a 
non-trivial tangle $L_t^S$, similarly isotope $L_t^S$ into a neighboring T-box which is closer to the tail via the path 
in the maximal tree $\tau_G$.  Note that such an isotopy can be performed by our choice of forbidden sectors.  Repeated 
application of this last step results in a link which is trivial in all boxes except the preferred one.
 \end{proof}

\subsection{The AMR invariant}\label{amr}

Andersen, Mattes and Reshetikhin \cite{AMR} defined a universal Vassiliev invariant of links in $1_\Sigma=\Sigma\times 
I$, for $\Sigma$ a surface with boundary, which generalizes the Kontsevich integral; we shall only require a weak
version of their more general construction in this paper.  These invariants depend on the choice of a polygonal 
decomposition of the surface $\Sigma$ together with other essentially combinatorial choices in order to decompose the 
link into suitable sub-links (as for the Kontsevich integral), and these
choices (and more) are provided by a marked fatgraph $G$ in $\Sigma$ as discussed in the previous section.

Let $\mathcal{L}(\Sigma;m)$ be the set of isotopy classes of oriented framed $m$-component links in the thickened 
surface $1_\Sigma$.
Fixing an even associator $\Phi\in \mathcal{A}( \uparrow^3)$ for the Kontsevich integral $Z$ once and for all and 
choosing a fatgraph $G$ marking in $\Sigma$, we  define the {\it AMR invariant}
\[ V_G: \mathcal{L}(\Sigma;m) \to \mathcal{A}(\bigcirc^m) \]
as follows.
Given a link $L\in \mathcal{L}(\Sigma;m)$ in admissible position, we apply $Z$ to each q-tangle $L^B$ and $L^S_i$ and 
map each trivial q-tangle $L^H_j$ to the empty Jacobi diagram in $\mathcal{A}(\uparrow^{|L^H_j|} )$, where $|L^H_j|$ is 
the number of connected components of $L^H_j$.  By choosing an even associator, we do not need to distinguish between 
the top and the bottom of the tangles $L^B$ and $L_i^S$.  We finally compose the resulting Jacobi diagrams as 
prescribed by the polygonal decomposition $P_G$ associated to $G$ to produce the desired $V_G(L)\in 
\mathcal{A}(\bigcirc^m)$.
The invariant depends on the choice of associator and the fatgraph $G$.  We shall also make use of natural extensions 
of  this invariant to certain framed tangles in $1_\Sigma$ with endpoints on $(\partial \Sigma) \times 
\{\frac{1}{2}\}$.

Our
definition of $V_G$ differs from \cite{AMR}  insofar as the original invariant  takes values in sums of diagrams on the 
surface, and we are post-composing with the map that forgets the homotopy data of how these diagrams lie in the surface.
This is of course a 
dramatic loss of information, and we wonder what would be the induced equivalence relation on 
$\mathcal{L}(\Sigma, m)$
assuming faithfulness of the original invariant  \cite{AMR}, which gives not only an isotopy invariant but also a universal Vassiliev invariant of links in $1_\Sigma$.  See $\S$\ref{conclusions} for a further discussion.

\subsection{Linking pairs and latches}\label{kpair}

It is a satisfying point that a marked fatgraph $G$ suffices to conveniently determine the choices required to define 
the AMR invariant $V_G(L)\in \mathcal{A}(\bigcirc^m)$ of an $m$ component link $L$.  The fatgraph furthermore 
determines several other ingredients required for the definition of our new invariants as we finally describe.

Let $M$ be a closed $3$-manifold, possibly with boundary.  A \emph{linking pair} in $M$ is a 2-component link $K$ 
arising from an embedding of a standard torus into $M$, where the first component of $K$ is the core of the torus, 
called the ``longitude'' of the pair,  and the second is a small null-homotopic $0$-framed meridian of it, called the 
``meridian'' of the pair.  We say a link $L$ is \emph{disjoint} from a linking pair $K$ in $M$ if $K$ is a linking pair 
in $M-L$.

In particular in $S^3$, any two framed links $L$ and $L'=L\sqcup K$,  with $K$ a linking pair and $L$ disjoint from $K$, are 
related by Kirby I and Kirby II moves.  However, this is no longer the case for 3-manifolds with boundary, and one must 
introduce a third move, called Kirby III, where a linking pair may be added  or removed from a surgery link without 
changing the resulting 3-manifold.  The precise statement of the theorem of  \cite{roberts} is that surgery on two 
framed links in a 3-manifold with boundary determine homeomorphic 3-manifolds if and only if the two links are related 
by a finite composition of the three Kirby moves, which are sometimes denoted simply KI-III.

Consider the ordered set of generators $X_G=\{x_1,...,x_h\}$ of $G$.
For each $x_i\in X_G$, the two paths from its endpoints to the tail in $\tau_G$ combine with $x_i$ to form a closed loop 
based at the tail.  By construction, these based loops comprise a (linearly ordered) set of generators for the 
fundamental group of $\Sigma$.
Let $l_i$ denote a simple closed curve, representing the free homotopy classes of
the $i$th loop, framed along $\Sigma\times\{ 1\}$ and pushed off in the $I$ direction in $1_\Sigma=\Sigma\times I$ to 
height $1-i\epsilon$, for some small  $\epsilon>0$ fixed independently  of  $i$, and pick a small $0$-framed meridian $m_i$ 
of $l_i$.  This 
provides a collection of linking pairs
$$ K_G:=\cup_i (l_i\cup m_i)\subset 1_\Sigma, $$
called the \emph{system of linking pairs} for $1_\Sigma$ determined by the fatgraph $G$.

\begin{lemma} \label{reducedform}
Let $G$ be a marked fatgraph in $\Sigma$ and let  $L\subset 1_\Sigma$ be a framed link disjoint from $K_G$.
There exists a (non-unique) framed link $L_0$ in $1_\Sigma - K_G$
contained in the preferred box of $P_G$, such that $L\cup K_G$ is equivalent under isotopy and Kirby II moves to 
$L_0\cup K_G$ in $1_\Sigma$.
\end{lemma}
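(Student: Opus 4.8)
The plan is to put $L$ in admissible position and then to push it into the preferred box by a finite sequence of handle slides over the longitudes $l_i$ of $K_G$, these being precisely the moves designed to absorb the nontrivial homotopy of $L$ in $\Sigma$. First, by Lemma~\ref{comb} we isotope $L$ so that it meets every box other than the preferred one, and every hexagon of $P_G$, in a trivial q-tangle; since within each cell of $P_G\times I$ the curves of $K_G$ appear as finitely many standard parallel strands at heights near $1$, this isotopy --- and all those used below --- may be chosen disjoint from $K_G$. Let $N(L)\ge 0$ be the resulting finite number of strands of $L$ running through the ``$G$-boxes'' $S_i\times I$ associated with the generators $x_i\in X_G$, and argue by induction on $N(L)$.

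If $N(L)=0$, then $L$ is contained in the union of the preferred box, the ``$T$-boxes'', and the cylinders $H_j\times I$ over the hexagons, which together form a regular neighbourhood $\mathcal{N}$ of $\tau_G$ in $1_\Sigma$, hence a $3$-ball. Since each $l_i$ is freely homotopic off $\tau_G$ --- realize it by crossing the band $S_i$ of $x_i$ and returning along the boundary of this neighbourhood --- and each $m_i$ is small, we may take $K_G$ disjoint from $\mathcal{N}$. An ambient isotopy of the ball $\mathcal{N}$ then carries $L$ into the preferred box, and we let $L_0$ be the image, which is disjoint from $K_G$; no Kirby moves are needed in this case.

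If $N(L)>0$, choose a $G$-box through which $L$ runs, say $S_i\times I$, and a strand $s$ of $L$ in it. The longitude $l_i$ passes exactly once through $S_i\times I$ and through no other $G$-box. Choose a band, contained in $S_i\times I$ together with an adjacent hexagon, joining a point of $L$ near $s$ to a parallel copy of $l_i$, and slide the component of $L$ containing $s$ over $l_i$ along this band to obtain a new link $L'$: this is a Kirby II move over the component $l_i$ of $L\cup K_G$, and it leaves $K_G$ unchanged. With the orientation of $l_i$ for the slide chosen correctly, $s$ together with the freshly inserted passage of $l_i$ through $S_i\times I$ forms a cancelling cap, which we isotope out of the $G$-box; after re-straightening $L'$ by Lemma~\ref{comb}, the strand $s$ has been traded for an arc running only through $T$-boxes, and no new $G$-box strand has been created because $l_i$ meets no other $G$-box. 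Hence $N(L')=N(L)-1$ and $L'$ is again disjoint from $K_G$, so the induction hypothesis applied to $L'$, together with the one additional Kirby II move, yields the claim.

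The main obstacle is the bookkeeping in the inductive step: one must verify that the handle slide over $l_i$, followed by the cancellation of the cap and the re-straightening of Lemma~\ref{comb}, together \emph{strictly decrease} $N$. This rests on the facts that $l_i$ meets the $G$-boxes only in its single strand through $S_i\times I$ (so the slide introduces no $G$-box strand away from $S_i\times I$); that the slide-orientation can be chosen so that this new strand cancels $s$ rather than doubling it, forming a cap that pulls out of the box; and that re-straightening by Lemma~\ref{comb} never creates new $G$-box strands. Keeping every isotopy disjoint from $K_G$ throughout is arranged by realizing each longitude as the core of the band $S_i$ closed up along the boundary of a regular neighbourhood of $\tau_G$, with each meridian a small circle disjoint from that neighbourhood.
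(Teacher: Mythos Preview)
Your argument follows the same overall strategy as the paper's: put $L$ in admissible position via Lemma~\ref{comb}, use Kirby~II slides over the longitudes $l_i$ to empty the $G$-boxes, and then isotope into the preferred box. Your inductive bookkeeping on the strand count $N(L)$ is a perfectly reasonable way to organise the longitude slides.

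The one substantive difference is that the paper inserts an extra step you omit: before sliding over longitudes, it performs Kirby~II moves over the \emph{meridians} $m_i$ so that every component of $L$ lies in a different $I$-slice of $\Sigma\times I$ than the longitudes. This is precisely what makes all the subsequent isotopies of $L$ (the re-straightenings from Lemma~\ref{comb}, the cap cancellations, and the final push into the preferred box) manifestly disjoint from $K_G$: once $L$ lives at heights away from the $l_i$, moving $L$ within its own slices cannot touch them. Your substitute---placing each $l_i$ as the core of $S_i$ closed up along $\partial\mathcal{N}$---does not quite achieve this. In that position $l_i$ runs along the cutting edges between hexagons and $G$-squares, and the isotopies of Lemma~\ref{comb} (pushing tangles from $G$-boxes through hexagons into $T$-boxes) must cross exactly those cutting faces; so the blanket assertion that ``all isotopies may be chosen disjoint from $K_G$'' is not justified as stated. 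The meridian slides are the clean device that lets one cross a longitude at will (a slide over $m_i$ realises a crossing change with $l_i$), and this is what your argument is missing.
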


\noindent Such a representative for a framed link as in the previous lemma is called a {\it reduced representative}.

\begin{proof}
By an isotopy supported in a neighborhood of the longitudes of $K_G$, we may arrange that the meridians are all 
contained in the preferred box.
According to Lemma \ref{comb}, we may assume that $L$ is admissible for $G$ and intersects each box except the 
preferred one in a trivial q-tangle.  By Kirby II moves along the meridians, we may arrange that each component of $L$ 
lies in a different slice of $\Sigma\times I$ than the longitudes.  We may furthermore arrange that the link does not meet the box corresponding to any 
generator $X_G$ of $G$ by sequentially, one generator at a time, performing Kirby II moves along the longitudes of 
$K_G$.  Each Kirby II move discussed thus far can and furthermore will be performed using bands for the slides that lie 
within a single box.  A final isotopy of the resulting link produces the desired link $L_0$, and $L\cup K_G$ is 
equivalent under Kirby II and isotopy to $L_0\cup K_G$ in $1_\Sigma$ by construction.
\end{proof}

One final ingredient, which will serve as the core of the space $\mc{A}(\uparrow^h)$ in which our invariant takes its 
values, is also determined by the generators of the fatgraph $G$.
In each box corresponding to a generator of $G$, consider an embedded arc in the boundary of $1_\Sigma$ as depicted in 
Figure \ref{latch}.
\begin{figure}[!h]
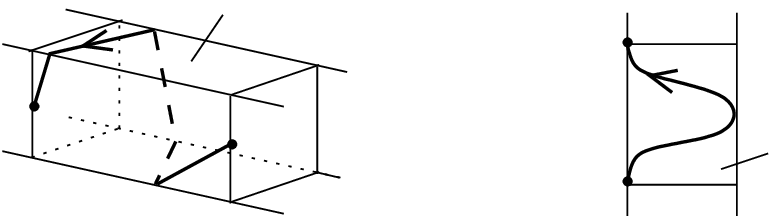 \caption{A latch based at the box $S$ in $1_\Sigma$.  The right-hand side is a projection in the
$I$ direction of $\Sigma\times I$.  } \label{latch}
\end{figure}
Such an arc, called a {\it latch}, is uniquely determined up to relative homotopy by which side of the box contains its 
endpoints, and we determine this side as that corresponding to the first oriented edge traversed by the total boundary 
cycle of $G$.  This collection of latches, one for each generator of $G$, is called  the  \emph{system of latches} 
$I_G$ \emph{determined} by $G$, and they occur in a natural orientation and linear order as before.
(These standard latches determined by the fatgraph admit a natural generalization given in $\S$\ref{general}, which is 
equally well-suited to the construction given in the next section.)
\section{The invariant $\nabla_G$}\label{sec:nabla}
By a \emph{cobordism over $\Sigma$}, we mean a $3$-manifold $(\Sigma \times I)_L$ obtained by surgery on some 
framed link $L$ in $1_\Sigma$.  In particular, a cobordism over $\Sigma$ comes equipped with an identification 
$\partial(\Sigma\times I)\approx \partial (\Sigma \times I)_L$, and two cobordisms are regarded as equivalent if 
there is a diffeomorphism between them that is equivariant for this identification.
Denote by $\mc{C}(\Sigma)$ the set of equivalence classes of cobordisms over $\Sigma$.

\subsection{The invariant $\nabla_G$ of cobordisms}\label{defnabla}
Our construction of $\nabla_G$ is modeled on the LMO invariant $Z^{LMO}$, where the role of the Kontsevich integral is 
now played by the AMR invariant defined in the previous section, and it relies on the LMO maps $\iota_n$, which we next 
recall and slightly extend.
\subsubsection{The map $\iota_n$} \label{sec:iota} This map is a key tool for LMO and for us as well.
It ``replaces circles by sums of trees'' in the rough sense that a core circle component can be erased by suitably 
summing over all trees spanning the endpoints of a Jacobi diagram in that component.

More precisely, for any pair $(m,n)$ of positive integers, any $1$-manifold $X$ without circle components and any 
linearly ordered $S=\{s_1,...,s_m\}$, first define the auxiliary map
$$\aligned j_n&: \mathcal{A}(X,S)\rightarrow \mathcal{A}(X)\cr
j_n(D)&= \left\{ \begin{array}{ll}
O_{n}(<D>) & \textrm{ if $D$ has exactly $2n$ vertices labeled with each color,} \\
0 & \textrm{ otherwise,}
\end{array} \right.
\endaligned$$
where $<D>$ is the sum of all possible Jacobi diagrams obtained by pairwise identifying univalent vertices of $D$ 
having the same color, and $O_{n}$ serially removes all isolated loops, one at a time and each with a compensatory 
factor $(-2n)$.

Given $x\in \mathcal{A}(X\sqcup  \bigcirc^m)$, where $X$ has no circle components and those of
 $\bigcirc^m$ are labeled by $S=\{ 1,\ldots, m\}$ in the natural way,
choose an element $y\in \mathcal{A}(X\sqcup \uparrow^m)$ such that $\pi(y)=x$, and consider
$\chi^{-1}(y)\in \mathcal{A}(X,S)$, where $\chi$ is the Poincar\'e--Birkhoff--Witt isomorphism.
The assignment $\iota_n(x):=\left(j_n(\chi^{-1}(y))\right)_{\le n}$ yields a well-defined map
\[ \iota_n: \mathcal{A}(X\sqcup  \bigcirc^m)\rightarrow \mathcal{A}_{\le n}(X). \]
\noindent Note that the definition given here is small reformulation of a simplified version \cite{Le2} of the original 
\cite{LMO}.
\subsubsection{Definition of the invariant $\widehat{\nabla}_G$}
Let $M$ be a cobordism over $\Sigma$ and let $G$ be a marked bordered  fatgraph in $\Sigma$.   $G$ determines 
the polygonal decomposition $P_G$ of $\Sigma$ with its forbidden sectors, the system $K_G$ of linking pairs and the 
system $I_G$ of latches in $1_\Sigma$, as well as the maximal tree $\tau_G$ and the system $X_G$ of generators.

Take a representative link $L\subset 1_\Sigma$ for $M$ which is disjoint from $I_G$ and $K_G$, so that 
$M=(1_\Sigma)_L=(1_\Sigma)_{L\cup K_G}$.  The linking number of two oriented components $K_1,K_2$
of $L$ in generic position is defined as follows:  project $K_1,K_2$ to 
$\Sigma\approx\Sigma\times \{ 0\}$
and sum over all crossings of the projections a sign $\pm 1$ associated to
each crossing, where the sign
 is positive if and only if the projections of the tangent vectors to the over- and under-crossing 
in this order
agree with the given orientation on $\Sigma$.
For an arbitrary orientation on the link $L\cup K_G$, we denote by $\sigma^{L\cup K_G}_+$, and \ $\sigma^{L\cup 
K_G}_-$, the respective number of positive and negative eigenvalues of its linking matrix,
which are well-defined independent of choices of orientation on components of $L\cup K_G$.

Denote by $V_G$ the AMR invariant determined by $P_G$ and our choice of even associator.  Set
\begin{equation} \label{nabla+}
\widehat \nabla^{G}_n(L):= \frac{\iota_n(\check{V}_G(L\cup K_G\cup I_G))}
{ \iota_n(\check{V}_G(U_{+}))^{\sigma^{L\cup K_G}_+} \iota_n(\check{V}_G(U_{-}))^{\sigma^{L\cup K_G}_-} }\in
\mathcal{A}_{\le n}(\uparrow^{h}),
\end{equation}
where $U_{\pm}$ denotes the $\pm 1$-framed unknot in $1_\Sigma$, and $\check{V}_G(\gamma)$ arises from $V_G(\gamma)$ 
for any framed tangle $\gamma$ by taking connected sum with $\nu$ on each {\sl closed} component, here using that $\{ 1,\ldots, h\}$ is in canonical bijection with $X_G$.

\begin{theorem}\label{thm:inv}
For each $n\ge 1$, the quantity $\widehat\nabla^{G}_n(L)$ defined in (\ref{nabla+}) does not change under Kirby moves 
KI-III  and does not depend on the orientation of $L$.  Thus,  $\widehat\nabla^{G}_n(L)$ is an invariant of the cobordism $(1_\Sigma)_L$.
\end{theorem}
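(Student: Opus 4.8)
The plan is to adapt the invariance proof for $Z^{LMO}$ from \cite{LMO} (in the streamlined form of \cite{Le2}) essentially line by line, treating the full link $L\cup K_G$ as the surgery link and the intervals underlying $I_G$ as the fixed core on which everything is assembled. The key point that makes this transfer possible is that $\check V_G$ is built from the framed Kontsevich integral $Z$ of the q-tangles lying in the boxes of $P_G$ (the hexagons contributing trivially, by admissible position and Lemma \ref{comb}), so it inherits all the structural properties of $Z$ used in \cite{LMO}: multiplicativity under split disjoint union, the cabling/comultiplication identity (\ref{evencable}) for a component staying inside the boxes, the behaviour under orientation reversal of a closed component (composition with the antipode on that component), and isotopy invariance; moreover the $\nu$-normalization defining $\check V_G$, together with the antipode-invariance of $\nu$ and the vanishing of the non-strut part of $\chi(\nu)$ recalled after (\ref{Z1}), is exactly what makes the $\iota_n$ maps interact correctly with these operations. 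Granting this, it suffices by the theorem of \cite{roberts} to show that the expression on the right of (\ref{nabla+}), regarded now as a function of an arbitrary framed link in $1_\Sigma$, is unchanged under isotopy, under orientation reversal of a component, and under each of KI, KII, KIII. Since $K_G$ is a disjoint union of linking pairs each disjoint from $L$, one then has $(1_\Sigma)_{L\cup K_G}=(1_\Sigma)_L$ by $h$ applications of KIII, and the last assertion of the theorem follows (in particular the value is independent of the chosen representative $L$).

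\textbf{Isotopy, orientation, and KI.} Isotopy invariance of the numerator $\iota_n(\check V_G(L\cup K_G\cup I_G))$ is immediate from that of $V_G$, the latches being fixed arcs in $\partial(1_\Sigma)$ which no interior isotopy meets; the denominator depends only on the eigenvalue counts $\sigma^{L\cup K_G}_{\pm}$ of the symmetric linking matrix, which are visibly link invariants and visibly independent of all component orientations. Reversing the orientation of a component $K$ of $L\cup K_G$ replaces $\check V_G$ by its composite with the antipode on $K$; since $\check V_G$ decorates the closed component $K$ by the antipode-invariant $\nu$, the composite $j_n\circ\chi^{-1}$ is insensitive to this change (the same computation as in \cite{LMO}), so the numerator is unchanged. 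For KI, writing $L'=L\sqcup U_{\pm}$ with $U_{\pm}$ a split $\pm1$-framed unknot, split multiplicativity of $\check V_G$ and of $\iota_n$ (the multiplicativity underlying the KI-normalization in \cite{LMO}) yields a factor $\iota_n(\check V_G(U_{\pm}))$ in the numerator, while exactly one of $\sigma^{\,\cdot}_{+},\sigma^{\,\cdot}_{-}$ increases by $1$, producing the same factor in the denominator; the quotient is unchanged.

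\textbf{KII.} This is the main step. Let $L'$ arise from $L\cup K_G$ by a handle slide along a band. By isotopy invariance we may first move the link, using Lemma \ref{comb} to clear all boxes but the preferred one, so that the two components of the slide together with the band lie inside a single cube among the boxes of $P_G$, everything else appearing as a fixed (unaltered) tangle; the slide then becomes an honest handle slide of a link in a cube, composed with unchanged pieces. The effect on $Z$ of that cube is the standard one — the component slid over is doubled via $\Delta$ using (\ref{evencable}), one parallel copy is carried along the band, and $Z$ of the result is recovered by the appropriate compositions — so $\iota_n(\check V_G(L'\cup K_G\cup I_G))=\iota_n(\check V_G(L\cup K_G\cup I_G))$ reduces to precisely the $\iota_n$ manipulation carried out in \cite{LMO,Le2} (again using antipode-invariance of $\nu$ and that $\chi(\nu)$ is a sum of struts). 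A handle slide acts on the linking matrix by an integral congruence, hence preserves $\sigma^{\,\cdot}_{\pm}$, so the denominator is untouched.

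\textbf{KIII.} Finally let $K=l\cup m$ be a linking pair disjoint from $L\cup K_G$, with $m$ a small $0$-framed meridian of $l$. After an isotopy $m$ bounds a disc meeting the rest of the link only in a single strand of $l$, so near $m$ the invariant $\check V_G$ agrees with the Kontsevich integral of the $0$-framed Hopf link; its value, computable in terms of $\nu$, is such that applying $\iota_n$ to the $m$-component (with the $\nu$-decoration in place) contributes exactly the scalar $\iota_n(\check V_G(U_+))\,\iota_n(\check V_G(U_-))$ while eliminating both $l$ and $m$ — parallel to, but more delicate than, the treatment of a $\pm1$-framed meridian in \cite{LMO}. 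The same $\bZ$-congruence argument as in KII, applied to the hyperbolic $2\times2$ block contributed by $l$ and $m$, shows that adjoining $K$ raises each of $\sigma^{\,\cdot}_{+},\sigma^{\,\cdot}_{-}$ by $1$, so this scalar is absorbed by the denominator. I expect KII and KIII to be the main obstacles: in KII, the delicate point is localizing an arbitrary slide — possibly involving a longitude of $K_G$ that winds through the surface — into the box structure of $P_G$ so that the cube computation of \cite{LMO} applies; in KIII, it is the explicit evaluation of the $\iota_n$-normalized AMR invariant of a linking pair, which has no analogue in the closed-manifold setting.
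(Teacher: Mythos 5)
Your treatment of isotopy, orientation reversal, KI, and KII essentially coincides with the paper's: for KII the paper likewise isotopes the link so that the band of the handle slide lies in a single square of $P_G$, invokes the cabling identity (\ref{evencable}) (available because the associator is even) to get the chord-KII relation for $\check V_G$, and then quotes the diagrammatic property of $\iota_n$ from \cite{LMO}. (Your phrase about putting ``the two components of the slide'' inside one cube is too strong --- only the band need be localized, the doubling of the component slid over being handled globally by (\ref{evencable}) --- but this is cosmetic.)

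The genuine gap is KIII. Your proposed argument is a local evaluation near the meridian $m$, claiming that $\iota_n$ eliminates both $l$ and $m$ and contributes exactly the normalizing scalar. This fails precisely in the case that makes KIII a separate move for manifolds with boundary, namely when the longitude $l$ is essential in $\Sigma$: then the projection of $l$ necessarily crosses the latches $I_G$ and the longitudes of $K_G$, so in $\check V_G(L\cup K_G\cup I_G\cup l\cup m)$ the $l$-core carries legs joined to the latch and longitude cores, and after applying $\iota_n$ these survive as additional diagrams (struts and wheels on the latch cores depending on the class of $l$) which have no counterpart in $\iota_n(\check V_G(L\cup K_G\cup I_G))$; no computation confined to a neighborhood of $m$ can see, let alone cancel, them. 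Indeed, if such a local argument worked it would give KIII-invariance without ever using $K_G$, whereas the whole point of adjoining the system of linking pairs is that (as the paper puts it) invariance under KIII is \emph{guaranteed by the presence of} $K_G$. The paper's proof instead reduces KIII to the moves already handled: since the longitudes of $K_G$ represent a generating set for $\pi_1(\Sigma)$, one first uses KII slides of $l$ over these longitudes (together with the observation, itself a consequence of KII moves on the meridians of $K_G$, that only the homotopy classes of the longitudes matter) to replace $l\cup m$ by a linking pair whose longitude is null-homotopic in $1_\Sigma$, then isotopes this pair into a $3$-ball, unlinks it from the meridians of $K_G$ by further KII moves, and finally removes it inside the ball by KI--II, using the classical fact that a linking pair in $S^3$ (or a ball) is redundant modulo KI--II. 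Your signature count (the hyperbolic $2\times 2$ block raises each of $\sigma_\pm$ by one) is correct but moot, since the corresponding identity for the numerator is exactly what is not established.
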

\begin{proof}
The invariance under Kirby I holds for the usual \cite{O} reason: the change in $\iota_n\left(\check{V}_G(L\cup K_G\cup 
I_G)\right)$ under introduction or removal of a $\pm 1$ framed unknot cancels the change in the denominator from
$\sigma^{L\cup K_G}_{\pm}$.

The invariance under KII follows from precisely the same argument as for the LMO invariant, which follows:  First 
observe that an analogue of \cite[Proposition 1.3]{LMO} holds for the AMR invariant: if two links $L$ and $L'$ in 
$1_\Sigma$ differ by a KII move, then $\check{V}_G(L)$ and $\check{V}_G(L')$ are related by a chord KII move, which is 
the move shown in \cite[Figure 6]{LMO}.
This is true  because on one hand, $\check{V}_G$ satisfies (\ref{evencable}) since we have chosen to work with an even 
associator, and on the other hand,  we can always assume (up to isotopy of the link) that each handleslide occurs along 
a band whose projection to $\Sigma$ is contained in a square $S_i$ in the polygonal decomposition $P_G$.  The 
invariance under KII is then shown purely at the diagrammatic level, and comes as a consequent property of the map 
$\iota_n$, whose construction is precisely motivated by its behavior under a chord KII move; see \cite[$\S$3.1]{LMO}.

We note that using KII moves on the meridian components of $K_G$, we can alter any crossing of a longitude with any 
other link component, whence the value of the invariant does not depend on the particular embedding of $K_G$ in $M$ as 
long as $L$ is disjoint from $K_G$ and the homotopy classes of the longitudes of $K_G$ are preserved.

We finally show that invariance under KIII is guaranteed by the presence of the system $K_G$ of linking pairs.   Let 
$L'$ be obtained by adding a linking pair $l\cup m$ to the link $L$, where $m$ is a $0$-framed meridian of the knot 
$l$.  Using the fact that the set of homotopy classes provided by the longitudes of $K_G$ can be represented by a 
system of generating loops for $\pi_1(\Sigma)$, we use KII moves to  successively slide $l$ along longitude components 
of $K_G$ until we obtain a linking pair with longitude null homotopic in $1_\Sigma$ and possibly linked with meridians 
in $K_G$.
We can arrange by isotopy that this linking pair is contained in a $3$-ball in $1_\Sigma$ and can assume by KII moves that it is unlinked with the meridians of $K_G$ in that 3-ball; as noted earlier, any 
such linking pair in a $3$-ball can be removed using Kirby KI-II moves.

Independence from the choice of orientation on $L$ follows from properties of the map $\iota_n$ just as for the LMO 
invariant; see \cite[$\S$3.1]{LMO}.
\end{proof}

Also just as for the LMO invariant, we unify the series $\widehat\nabla^{G}_n$ into a power series invariant by 
setting
\begin{equation}\label{nabla}
\widehat\nabla_G(M):= 1+\left( \widehat\nabla^{G}_1(L) \right)_{1} +  \left( \widehat\nabla^{G}_2(L) \right)_{2} +
\dotsm \in \mathcal{A}(\uparrow^{h})
\end{equation}
in order to define a map
$$ \widehat\nabla_G: \mc{C}(\Sigma)\ra \mathcal{A}(\uparrow^{h}). $$

In the case of a 2-disc $\Sigma_{0,1}$ with the convention that a single edge for the tail is allowed to be a fatgraph $G$, 
$P_G$ is a disk, and both $K_G$ and $I_G$ are empty, then the invariant $\widehat\nabla_G$ exactly coincides with the 
LMO invariant.

Recall that the space of Jacobi diagrams $\mc{A}(\uparrow^h)$ on $h$ intervals is isomorphic to the space $\mc{B}(h)$ 
of $h$-colored Jacobi diagrams via the Poincar\'e--Birkhoff--Witt isomorphism.  Furthermore, there is the projection of 
$\mc{B}(h)$ onto $\mc{A}_h:=\mc{B}^\mathsf{Y}(h)$, and we shall be equally interested in the value our invariant takes in the 
target space $\mc{A}_h$ and hence define
 \[  \nabla_G  \colon \mc{C}(\Sigma) \ra \mc{A}_h,  \]
 where $\nabla_G$ is the composition of $\widehat\nabla_G$ with the projection $\mc{A}(\uparrow^h)\cong\mc{B}(h) \ra 
\mc{A}_h$.  We wonder whether the strut part of $\hat{\nabla}_G(M)$ is related to the homology type of $M$.

\subsection{Universality of $\nabla_G$ for homology cylinders}\label{sec:thmunivnabla}

Homology cylinders are a special class of cobordisms which are important in the theory of finite type 
invariants, cf.\ \cite{Habiro,G}.
In this section, we show that for any marked bordered fatgraph $G$ in the surface $\Sigma$, the invariant $\nabla_G$ of 
cobordisms is universal among rational-valued finite type invariants of homology cylinders over $\Sigma$ in the 
sense of Goussarov and Habiro \cite{Habiro,G}.  We first recall the definition of these objects and review the theory 
of finite type invariants before stating our universality result.
\subsubsection{Claspers and finite type invariants of homology cylinders} \label{fti}

In this section, we briefly review the Goussarov-Habiro theory of finite type invariants for compact oriented 
$3$-manifolds \cite{G,GGP,Habiro}, which essentially generalizes Ohtsuki's theory \cite{ots} for integral homology 
spheres.

A \emph{clasper} $C$ in a $3$-manifold $M$ is an embedding in $M$ of the skinny surface of a (possibly disconnected) 
Jacobi diagram having a framed copy of $S^1$ attached to each univalent vertex.  The copies of $S^1$ are called the 
\emph{leaves} of $C$, the trivalent vertices are called the \emph{nodes} of $C$, and we still call the 4-gons 
associated to the edges of the graph the \emph{edges} of $C$.
We tacitly demand that each connected component of a clasper contains at least one node.
The number of connected components of $C$ is denoted $|C|$, and
its \emph{degree} is the total number of nodes.  A connected clasper of degree $1$ is often called a \emph{Y-graph}.

A clasper $C$ of degree $k$ in $M$ determines a framed link $L(C)$ in $M$, and {\it surgery along $C$} means surgery 
along $L(C)$.   To construct $L(C)$ from $C$, first apply the \emph{edge splitting rule} shown in the left-hand side of 
Figure
\ref{yh} until $C$ becomes a disjoint union of $k$ $Y$-graphs.
Next in a regular neighborhood, replace each $Y$-graph by a $6$-component framed link as shown in the right-hand side 
of Figure \ref{yh}.
\begin{figure}[!h]
\includegraphics{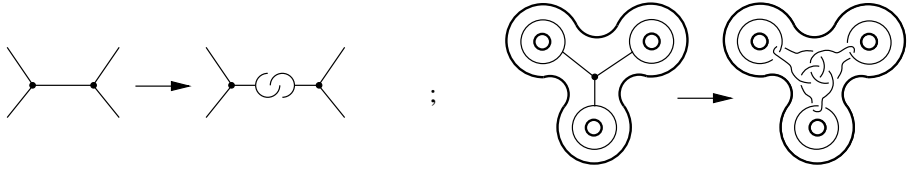}
\caption{The edge splitting rule and the surgery link associated to a $Y$-graph.} \label{yh}
\end{figure}

The link $L(C)$ (that we sometimes also call a clasper) has $6k$ components if $C$ has degree $k$.  The $3k$ components 
coming from the $k$ nodes are called the \emph{Borromean components} of $L(C)$, and the remaining $3k$ components are 
called the \emph{leaf components}.   We may also sometimes write simply $M_C$ for the surgery $M_{C(L)}$.

A \emph{homology cylinder} over a compact surface $\Sigma$ is a 3-manifold  $M=(1_\Sigma)_C$ that arises from surgery on some clasper $C$ in $1_\Sigma$.  
Note that $1_\Sigma=\Sigma\times I$ and hence $M=(1_\Sigma)_C$ comes equipped with embeddings $i^{\pm}: \Sigma \rightarrow M$ with respective images $\Sigma^\pm$, such that:
\begin{itemize}
\item[(i)] $i^+|_{\partial \Sigma}=i^-|_{\partial \Sigma}$;
\item[(ii)] $\partial M=\Sigma^+\cup \left(-\Sigma^-\right)$ and
$\Sigma^+\cap\left(-\Sigma^-\right)=\pm\partial\Sigma^\pm$, where $-\Sigma$ denotes reversal of orientation on
$\Sigma$;
\item[(iii)] $i^\pm_*:H_1(\Sigma;\mathbf{Z})\rightarrow H_1(M;\mathbf{Z})$ are identical  isomorphisms.
\end{itemize}
In the special case where $\Sigma$ has at most one boundary component, such a triple $(M,i^+,i^-)$ satisfying i-iii) conversely always arises from clasper surgery in $1_\Sigma$, cf. \cite{MM}.

The set of homology cylinders over $\Sigma$ up to orientation-preserving diffeomorphism is denoted 
$\mathcal{HC}(\Sigma)$.  There is a natural  {\it stacking product} on $\mathcal{HC}(\Sigma)$ that arises by identifying the top of one homology cylinder with the bottom of another and reparametrizing the interval, i.e., by stacking one clasper on top of another.  This induces a monoid structure on $\mathcal{HC}(\Sigma)$ with $1_\Sigma$ as unit element.

Let $\mathcal{H}_\Sigma$ be the $\mathbb{Q}$-vector space freely generated by elements of $\mathcal{HC}(\Sigma)$ with 
its descending \emph{Goussarov-Habiro filtration} given by
\begin{equation}\label{GHfiltration}
 \mathcal{H}_\Sigma \supset \mathcal{F}_1(\Sigma)\supset\mathcal{F}_2(\Sigma) \supset ...
\end{equation}
where for $k\ge 1$, $\mathcal{F}_k(\Sigma)$ denotes the subspace generated by elements
  $$ [M;C]:=\sum_{C'\subseteq C} (-1)^{|C'|} M_{C'}, $$
with $M\in \mathcal{HC}(\Sigma)$,  $C$ a degree $\ge k$ clasper in $M$, and the sum running over all subsets $C'$ of 
the set of connected components of $C$.

A \emph{finite type invariant of degree $\le k$} is a map $f: \mathcal{HC}(\Sigma)\ra V$, where $V$ is a 
$\mathbb{Q}$-vector space, whose natural extension to $\mathcal{H}_\Sigma$ vanishes on $\mathcal{F}_{k+1}(\Sigma)$.
Denote by $\mathcal{G}_k(\Sigma)$ the graded quotient $\mathcal{F}_k(\Sigma) / \mathcal{F}_{k+1}(\Sigma)$
and let 
\[
\overline{\mathcal{H}}_\Sigma:=~({\rm degree~completion~of}~\mathcal{H}_\Sigma ) /( \cap_k \mathcal{F}_k(\Sigma)).
\]
A fundamental open question is whether $\cap_k \mathcal{F}_k(\Sigma)$ is trivial.
\subsubsection{Universality of the invariant $\nabla_G$}\label{sec:proofuniversalitynabla}
It is known that the LMO invariant is a universal invariant for homology spheres, i.e., every rational-valued finite 
type invariant of homology spheres factors through it \cite{Le3}.  As noted in $\S$\ref{defnabla}, our invariant 
$\nabla_G$ coincides with the LMO invariant for $\Sigma=\Sigma_{0,1}$, and in this section, we prove the following 
generalization of the universality of LMO.
\begin{theorem} \label{thm:univ}
Let $\Sigma$ be a compact connected oriented surface with boundary and $G$ be a marked bordered fatgraph in $\Sigma$.  Then the 
invariant $\nabla_G$ is a universal finite type invariant of homology cylinders over $\Sigma$.
\end{theorem}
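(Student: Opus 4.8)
The plan is to show that $\nabla_G$ respects the Goussarov--Habiro filtration $(\ref{GHfiltration})$ and that the induced map on the associated graded is an isomorphism; by the standard formalism of finite type invariants this is exactly the assertion that $\nabla_G$ is universal, and it moreover produces the induced isomorphism $\nabla_G\colon\overline{\mc H}_\Sigma\to\mc A_h$ mentioned in the introduction. Extend $\nabla_G$ linearly to $\mc H_\Sigma$. The first step is to check that $\nabla_G(\mc F_k(\Sigma))\subseteq(\mc A_h)_{\ge k}$, where $(\mc A_h)_{\ge k}$ is the part of $i$-degree $\ge k$; this is the analogue for $\nabla_G$ of the classical fact that $Z^{LMO}$ is of finite type, and it follows from the behaviour of the Kontsevich integral under clasper surgery together with the degree truncation built into $\iota_n$: surgery along a degree-$k$ clasper alters $\check V_G$, hence $\widehat\nabla_G$, hence $\nabla_G$, only in $i$-degrees $\ge k$ (for a single $Y$-graph this is a direct computation, and the general case follows from the inclusion--exclusion defining $[M;C]$, exactly as in \cite{GGP,Habiro}). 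Granting this, $\nabla_G$ is a finite type invariant, there are well-defined graded maps $\overline\nabla_G^{(k)}\colon\mc G_k(\Sigma)\to(\mc A_h)_k$, and the theorem reduces to showing each $\overline\nabla_G^{(k)}$ is an isomorphism.

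Next I would invoke the structure of $\mc G_k(\Sigma)$ from the calculus of claspers. Given an $h$-colored Jacobi diagram $D$ without strut components and of $i$-degree $k$, realize it by a clasper $C_D\subset 1_\Sigma$ with one node for each trivalent vertex of $D$, edges following those of $D$, and, for each univalent vertex of $D$ colored $i$, a leaf which is a small $0$-framed parallel pushoff of the $i$-th generating loop $l_i$ determined by $G$, all taken at generic heights so that the leaves are unknotted and mutually unlinked. By Goussarov--Habiro theory and the clasper calculus of \cite{GGP} (the AS, IHX and leaf/linearity relations), the assignment $D\mapsto[1_\Sigma;C_D]$ descends to a \emph{surjective} surgery map $s_k\colon(\mc A_h)_k\twoheadrightarrow\mc G_k(\Sigma)$. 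In particular $\mc G_k(\Sigma)$ is a quotient of $(\mc A_h)_k$, so it suffices to prove that the composite $\overline\nabla_G^{(k)}\circ s_k\colon(\mc A_h)_k\to(\mc A_h)_k$ is, up to an overall sign, the identity: since $s_k$ is onto and admits a left inverse, it is then an isomorphism, and hence so is $\overline\nabla_G^{(k)}=\pm s_k^{-1}$.

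The heart of the argument, and the step I expect to be the main obstacle, is therefore the leading-order computation $\bigl(\nabla_G([1_\Sigma;C_D])\bigr)_k=\pm D$. One must track the Kontsevich integral through surgery along the $6k$-component link $L(C_D)$ together with the systems $K_G$ and $I_G$, and then apply $\iota_n$. The inputs are: (i) the computation, carried out for $Z^{LMO}$ in \cite{LMO} and sharpened by clasper calculus, that the leading contribution of surgery along a $Y$-graph is a single trivalent vertex, so that the $k$ nodes of $C_D$ produce exactly the $k$ trivalent vertices of $D$ and its internal edges; (ii) the fact that a leaf of $C_D$ which is a pushoff of $l_i$ is, by construction of $K_G$, a small meridian-type curve linked with the longitude $l_i$, and that $\iota_n$ together with the latch of $I_G$ based at the $i$-th generator converts such a leaf into a single univalent vertex on the $i$-th strand of $\uparrow^h$ — that is, after the Poincaré--Birkhoff--Witt isomorphism and projection to $\mc A_h$, into an $i$-colored leg; and (iii) the observations that the normalizing denominator $\iota_n(\check V_G(U_\pm))^{\sigma_\pm}$, the connected sums with $\nu$, the strut parts, and all contributions of $i$-degree $<k$ play no role in $i$-degree $k$. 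Assembling (i)--(iii) identifies the $i$-degree-$k$ part of $\nabla_G([1_\Sigma;C_D])$ with $\pm D$. The delicate bookkeeping — the associator, the bracketings on the cutting faces, and the precise interaction of $\iota_n$ with the core $\uparrow^h$ coming from $I_G$ — is entirely diagrammatic and can be handled as in the LMO universality proof once the systems $K_G$ and $I_G$ are set up as above.

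Finally, with $\overline\nabla_G^{(k)}$ an isomorphism for every $k$, one concludes in the usual way, mirroring Le's proof of universality of $Z^{LMO}$ \cite{Le3}: any rational-valued finite type invariant $f$ of degree $\le N$ of homology cylinders over $\Sigma$ factors through the truncation of $\nabla_G$ in $i$-degree $\le N$, by downward induction on degree using that $f$ is determined on $\mc F_k/\mc F_{k+1}\cong\mc G_k(\Sigma)\cong(\mc A_h)_k$ by its leading term. Moreover, since $\nabla_G$ is a filtered map inducing an isomorphism on the associated graded, and both $\overline{\mc H}_\Sigma$ and $\mc A_h$ are complete with respect to their filtrations (the $i$-degree, equivalently $J$-degree, completion on the target), $\nabla_G$ descends to an isomorphism $\overline{\mc H}_\Sigma\xra{\sim}\mc A_h$, which is the stated universality.
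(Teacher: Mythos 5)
Your overall strategy is the same as the paper's: reduce to the associated graded of the Goussarov--Habiro filtration, realize $h$-colored diagrams by claspers whose leaves are pushoffs of the generating loops $l_i$ determined by $G$ (this is the paper's surgery map $\phi^G_{k,l}$), check that the $i$-degree $\le k$ part of $\nabla_G$ has finite type degree $k$, and then compute the leading term of $\nabla_G$ on $[1_\Sigma;C(D)]$. However, there is a genuine gap in the step you yourself flag as the heart of the argument: the claim that $\bigl(\nabla_G([1_\Sigma;C_D])\bigr)_k=\pm D$ is false as stated. The actual computation (carried out in the paper) gives, in $i$-degree $k$, the term $(-1)^k D$ \emph{plus} extra contributions of the same $i$-degree $k$ but with strictly fewer univalent vertices; these come from the ``looped'' configurations in which a leaf that is a pushoff of $l_i$ links another such leaf rather than the latch of $I_G$, producing a strut joining two leaf cores. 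Consequently the composite $\overline\nabla_G^{(k)}\circ s_k$ on $(\mc{A}_h)_k$ is not $\pm\operatorname{id}$, and your stated justification for injectivity of $s_k$ (hence for $\overline\nabla_G^{(k)}$ being an isomorphism) does not go through as written.

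This is exactly why the paper introduces the secondary filtration $\mathcal{G}_k(\Sigma)=\mathcal{F}_{k,3k}(\Sigma)\supset\dotsb\supset\mathcal{F}_{k,0}(\Sigma)$ by the number of leaves, together with the decomposition $\mathcal{B}^\mathsf{Y}_k(h)=\bigoplus_{l}\mathcal{B}^\mathsf{Y}_{k,l}(h)$, and proves the identity $\nabla^G_{k,l}(\phi^G_{k,l}(D))=(-1)^k D$ only after projecting onto the $l$-leg piece $\mathcal{B}^\mathsf{Y}_{k,l}(h)$, where the looped terms are killed. Your argument can be repaired either by adopting this bi-filtration, or by observing that with respect to the filtration by number of legs the composite $\overline\nabla_G^{(k)}\circ s_k$ is triangular with diagonal $\pm\operatorname{id}$, hence still an isomorphism; but one of these refinements is needed, and it is the one nontrivial bookkeeping point that cannot simply be delegated to ``as in the LMO universality proof'' (in the closed-manifold LMO setting there are no latches or longitude-pushoff leaves, so this phenomenon has no direct precedent there). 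Two smaller inaccuracies: the leaf associated to an $i$-colored univalent vertex is a parallel copy of the longitude $l_i$ itself, linking the corresponding latch of $I_G$ with linking number one (not a meridian-type curve linked with $l_i$), and the normalizing denominator does contribute a sign $(-1)^{Jk+Jh}$ at leading order, so it cannot be discarded entirely if one wants the precise constant $(-1)^k$.
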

\noindent As an immediate consequence we have
\begin{corollary}\label{cor:iso}
For each marked fatgraph $G$ in the surface $\Sigma$, there is a filtered isomorphism
$ \overline{\mathcal{H}}_\Sigma \xrightarrow{\cong} \mathcal{A}_h$
induced by the universal invariant $\nabla_G$, where $h=2g+n-1$.
\end{corollary}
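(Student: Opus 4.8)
The plan is to deduce the corollary formally from Theorem \ref{thm:univ}, since the content of universality is precisely that $\nabla_G$ induces an isomorphism on associated graded pieces, and a filtered map inducing such an isomorphism between complete separated filtered spaces is automatically a filtered isomorphism.

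First I would check that $\nabla_G$, extended $\mathbb{Q}$-linearly to $\mathcal{H}_\Sigma$, is a filtered map, meaning $\nabla_G(\mathcal{F}_k(\Sigma)) \subseteq (\mathcal{A}_h)_{\geq k}$, where $(\mathcal{A}_h)_{\geq k}$ denotes the part of $i$-degree at least $k$. This is the statement that surgery along a degree-$k$ clasper contributes only Jacobi diagrams of $i$-degree $\geq k$, and it is exactly the clasper-degree estimate already established in proving Theorem \ref{thm:univ}. Since $\mathcal{A}_h$ is separated for its $i$-degree filtration (its $i$-degree and $J$-degree completions agree, as recalled in \S\ref{defjac}), it follows that $\nabla_G(\cap_k \mathcal{F}_k(\Sigma)) \subseteq \cap_k (\mathcal{A}_h)_{\geq k} = \{0\}$; hence $\nabla_G$ kills $\cap_k \mathcal{F}_k(\Sigma)$ and extends continuously to a filtered map $\overline{\nabla}_G \colon \overline{\mathcal{H}}_\Sigma \to \mathcal{A}_h$ on the completion quotient.

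Next I would unpack universality as a graded isomorphism. The associated graded of $\overline{\nabla}_G$ consists of the maps $\mathrm{gr}_k(\nabla_G) \colon \mathcal{G}_k(\Sigma) = \mathcal{F}_k(\Sigma)/\mathcal{F}_{k+1}(\Sigma) \to (\mathcal{A}_h)_k$. That $\nabla_G$ is a \emph{universal} finite type invariant is equivalent to each $\mathrm{gr}_k(\nabla_G)$ being an isomorphism: injectivity says that any finite type invariant trivial wherever $\nabla_G$ is trivial factors through it, while surjectivity says every $h$-colored strutless Jacobi diagram is realized by a clasper. Both facts I would read directly off the proof of Theorem \ref{thm:univ}, whose clasper computation exhibits, in each $i$-degree $k$, a spanning family of $\mathcal{G}_k(\Sigma)$ mapping bijectively onto a basis of $(\mathcal{A}_h)_k$ (so that the family is forced to be a basis and the graded map an isomorphism).

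Finally I would invoke the standard completion argument. Both $\overline{\mathcal{H}}_\Sigma$ and $\mathcal{A}_h$ are complete and separated for their filtrations---the former by construction as the degree completion of $\mathcal{H}_\Sigma$ modulo $\cap_k \mathcal{F}_k(\Sigma)$, the latter as just noted---and $\overline{\nabla}_G$ is a filtered morphism inducing an isomorphism on every graded quotient. By the usual successive-approximation argument for filtered vector spaces, $\overline{\nabla}_G$ is then a filtered isomorphism, as claimed, with $h = 2g + n - 1$. I expect no real obstacle here: the only point requiring care is the strict filtering of the first step, but this is supplied verbatim by the degree bookkeeping in Theorem \ref{thm:univ}, so for the corollary it need only be cited.
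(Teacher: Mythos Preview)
Your proposal is correct and aligns with the paper's treatment: the paper explicitly says the corollary is ``simply a re-statement of the theorem'' and proves both simultaneously by exhibiting the surgery map $\phi^G_{k,l}$ as an explicit inverse on graded pieces (Facts (1) and (2) in \S\ref{sec:proofunivnabla}), which are precisely the filtering estimate and the graded-isomorphism statement you cite. The only minor difference is packaging: the paper constructs the inverse concretely via the surgery map rather than invoking the abstract complete-separated-filtered argument, but the content is the same.
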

Indeed, the corollary is simply a re-statement of the theorem, and our proof will proceed by exhibiting and checking 
the isomorphism in Corollary ~\ref{cor:iso}.  This will occupy the remainder of the section and begins with the 
definition of the inverse map to $\nabla_G$.

\subsubsection{The surgery map} \label{surgmap}

The graded quotient $\mathcal{G}_k(\Sigma)$ is generated by elements $[1_\Sigma;C]$, where $C$ is a degree $k$ clasper 
in $1_\Sigma$ since
\[ [M;C\cup C'] = [M;C] - [M_{C'};C], \]
where $M\in \mathcal{HC}(\Sigma)$ and $C\cup C'$ is a disjoint union of claspers in $M$ with $C'$ connected.
Define a filtration
\[ \mathcal{G}_k(\Sigma)=\mathcal{F}_{k,3k}(\Sigma)\supset \mathcal{F}_{k,3k-1}(\Sigma)\supset ... \supset 
\mathcal{F}_{k,1}(\Sigma)\supset \mathcal{F}_{k,0}(\Sigma), \]
where
$\mathcal{F}_{k,l}(\Sigma)$ is generated by elements $[1_\Sigma;C]$ with $C$  a degree $k$ clasper in $M$ having $\le 
l$ leaves.  We also  set
\[
\mathcal{G}_{k,l}(\Sigma) := \mathcal{F}_{k,l}(\Sigma) / \mathcal{F}_{k,l-1}(\Sigma).
\]

Denote by $\mathcal{B}^\mathsf{Y}_k(h)$ the $i$-degree $k$ part of $\mc{A}_h=\mathcal{B}^\mathsf{Y}(h)$ and denote by 
$\mathcal{B}^\mathsf{Y}_{k,l}(h)$ the subspace generated by Jacobi diagrams of $i$-degree $k$ with $l$ univalent vertices.  Note 
that $\mathcal{B}^\mathsf{Y}_k(h)=\bigoplus_{0\le l\le 3k} \mathcal{B}^\mathsf{Y}_{k,l}(h)$.

For any marked fatgraph $G\hra \Sigma$ and for any pair $k,l$ of integers with $k\ge 1$ and $0\le l\le 3k$, we define a 
surgery map $\phi^G_{k,l}$ using claspers as follows.
Let $D\in \mathcal{B}^\mathsf{Y}_{k,l}(h)$ be some Jacobi diagram.
For each univalent vertex $v$ of $D$ labeled by $i$, consider an oriented framed knot in $1_\Sigma$ which is a parallel 
copy of the longitude $l_i$ of the system $K_G$ of linking pairs.  For each trivalent vertex of $D$, consider an embedded oriented disk in $1_\Sigma$.  These choices are 
made subject to the constraint that the resulting annuli and disks are pairwise disjoint in $1_\Sigma$.
Connect these various embedded annuli and disks by disjoint bands as prescribed by the diagram $D$ in a way which is 
compatible with their orientations and such that the cyclic order of the three attached bands (given by the 
orientation) at each disk agrees with the cyclic order at the corresponding trivalent vertex of $D$.
The resulting surface is a degree $k$ clasper with $l$ leaves in $1_\Sigma$ denoted $C(D)$, and we set
\[ \phi^G_{k,l}(D) := [1_\Sigma ; C(D)] \in \mathcal{G}_{k,l}(\Sigma). \]

It follows from \cite[Theorem 1]{brane} (see also \cite{Habegger}) that this assignment yields a well-defined
surjection
\[ \phi^G_{k,l}:{\mathcal B}^\mathsf{Y}_{k,l}({h})\to \mathcal{G}_{k,l}(\Sigma). \]
The proof makes use of the calculus of claspers ; see \cite{CHM,GGP,Habiro,O} for similar results.
\subsubsection{Proof of Theorem \ref{thm:univ}}\label{sec:proofunivnabla}
We need to prove the following two facts:
\begin{enumerate}
\item[Fact (1)] The $i$-degree $\le k$ part of $\nabla_G$ is a finite type invariant of degree $k$.
\item[Fact (2)] For each pair $(k,l)$ with $k\ge 1$ and $0\le l\le 3k$, the invariant $\nabla_G$ induces  the
    inverse to the surgery map $\phi^G_{k,l}$.
More precisely, given a Jacobi diagram $D\in \mathcal{B}^\mathsf{Y}_{k,l}({h})$, we have
\begin{equation}\label{inverse}
   \nabla^G_{k,l}(\phi^G_{k,l}(D))=(-1)^{k} D\in \mathcal{B}^\mathsf{Y}_{k,l}({h}),
\end{equation}
\noindent where $\nabla^G_{k,l}$ denotes the composition of $\nabla_G$ with the projection onto
$\mathcal{B}^\mathsf{Y}_{k,l}({h})$.
\end{enumerate}

In order to prove Fact (1), it is enough to consider an element $[1_\Sigma;C]$, where $C$ is a disjoint union of  $k$ 
Y-graphs in $1_\Sigma$ (by construction using the edge splitting rule) and prove that the minimal $i$-degree of $\nabla 
_G( [1_\Sigma;C])$ is $k$. To this end, we may up to isotopy assume that there are $k$ disjoint $3$-balls contained in
the boxes of the polygonal decomposition $P_G$ corresponding to $G$ which intersect $C(D)$ as depicted on the left-hand 
side of Figure \ref{main}.
Note that
\[ [1_\Sigma;C]=\sum_{C'\subseteq C} (-1)^{|C'|} (1_\Sigma)_{C'}=\sum_{C'\subseteq C} (-1)^{|C'|} (1_\Sigma)_{L_0(C')}, 
\]
where $L_0(C')$ is obtained from the link $L(C)$ by replacing each Borromean linking corresponding to a node of $C- C'$ 
by a trivial linking, so in particular $L_0(C')$ is Kirby equivalent to $L(C')$.
In the computation of $\nabla_G([1_\Sigma;C])$ at lowest $i$-degree, we thus obtain for each node of $C$ a trivalent 
vertex attached to the three corresponding core components.  This follows  \cite{O} from the property  \\[0.3cm]
$\textrm{ }$ \input{Zbor.pstex_t} \\
of the Kontsevich integral and implies that the minimal $i$-degree of $\nabla_G([1_\Sigma;C])$ is $k$ as required to 
prove (1).

Turning our attention now to (2),  let $D\in \mathcal{B}^\mathsf{Y}_{k,l}({h})$ be a Jacobi diagram of $i$-degree $k$ with $l$ 
univalent vertices and consider $\phi^G_{k,l}(D)=[1_\Sigma;C(D)]$.  Denote by $J=\frac{1}{2} (k+l)$ the Jacobi degree 
of $D$.   As before, we can assume that there are $k$ disjoint  $3$-balls in the boxes of $P_G$ each of which 
intersects $C(D)$ as depicted on the left-hand side of Figure \ref{main}, and we can assume that there are a further 
$h$ disjoint $3$-balls that intersect the system $K_G$ of linking pairs as illustrated on the right-hand side of the 
same figure.
  \begin{figure}[!h]
\includegraphics{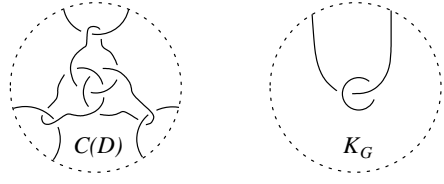}
\caption{Convenient positions for $C(D)$ and $K_G$. } \label{main}
\end{figure}

Let us now compute the (relevant part of the) AMR invariant $\check{V}_G$ of the alternating sum $\phi^G_{k,l}(D)$.
Since we are only computing the lowest $i$-degree part, the contributions of all associators and $\nu$'s can be 
ignored, see (\ref{assocY}) and \cite{BNGRT2} respectively.  In fact, we shall only need to consider the contributions 
arising from the trivalent vertices $(\star)$ and struts coming from crossings, cf.\ $(\star \star)$.
Since the value of $\nabla^G_{k,l}$ is  of $J$-degree $J=\frac{k+l}{2}$,
we must post-compose $\check{V}_G\left( \phi^G_{k,l}(D)\right)$ with the map $\iota_J$ to establish the formula in Fact 
(2).  We need only focus on those terms of $\mathcal{A}(\bigcirc^{6k+2h} \sqcup \uparrow^{h})$ having exactly $2J$ 
vertices on each copy of $S^1$ by definition of $\iota_J$ since only those terms can contribute to the lowest 
$i$-degree part of $\nabla_G(\phi^G_{k,l}(D))$.

We call the core components corresponding to Borromean (leaf, meridian, longitude, latch respectively) components of 
$L\left(C(D)\right)\cup K_G\cup I_{G}$ the Borromean (leaf, meridian, longitude, latch respectively) cores of the 
Jacobi diagrams in the AMR invariant $\check{V}_G(L\left(C(D)\right)\cup K_G\cup I_{G})$.

We first consider contributions of  the linking pairs and recall \cite{O} that \\[0.3cm]
$\textrm{ }\qquad\textrm{}\qquad$ \input{Zedge.pstex_t}. \\
Since the meridian component of a linking pair  is isolated from every component other than its corresponding 
longitude, it follows that all $2J$ vertices on the meridian core must be the ends of distinct struts arising from the 
linking with this longitude.
The resulting connected diagram, which arises from ($\star \star$) with a coefficient $\frac{1}{(2J)!}$, is called a 
\emph{Siamese diagram}; see Figure \ref{AMRclasper}.

Each Borromean component of $L\left(C(D)\right)$  on the one hand forms a Borromean linking with two other such 
components and, on the other hand  links a leaf, cf.\ Figure \ref{main}.  On each Borromean core there is thus one 
vertex arising from ($\star$), and the remaining  $(2J-1)$ vertices are the ends of parallel struts arising from 
($\star \star$) with their opposite ends on a leaf core.

On each leaf core, there is thus only room for one additional vertex.
Furthermore, there is the following dichotomy on leaves of $C(D)$:
\begin{itemize}
\item[(a)] the leaf forms a positive Hopf link with another leaf of $C(D)$ as in the left-hand side of Figure
    \ref{yh};
\item[(b)] the leaf is a parallel copy of a longitude component $l_i$ of $K_G$, pushed off so that it is unlinked
    from the meridian $m_i$, for some $1\le i\le h$.
\end{itemize}
For a type (a) leaf, the only possible contribution is the linking with another type (a) leaf, which produces a strut 
by ($\star \star$).
For a type (b) leaf, we must consider several cases: either the strut comes from a crossing with another type (b) leaf, 
or it comes from a crossing with a component of $I_{G}$ (since a crossing with the longitude components of $K_G$ cannot 
contribute, as we have noted previously).
In the first case, we thus have a strut joining two type (b) leaf cores, and we say that a Jacobi diagram with such a 
strut is \emph{looped}.
In the second case, we have a strut joining the leaf core to a latch core.  A typical example is (partially) 
represented in Figure \ref{AMRclasper}.
\begin{figure}[!h]
\includegraphics{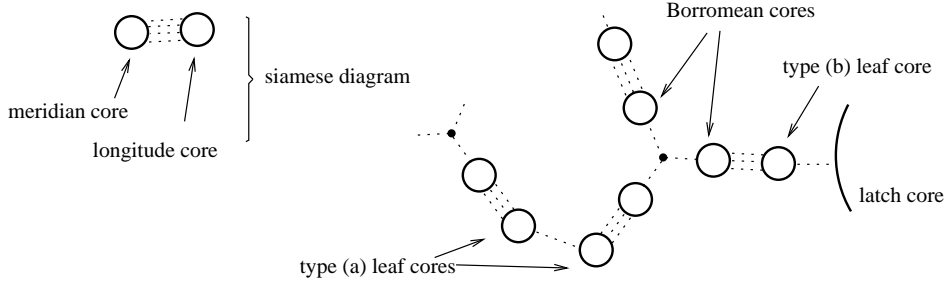}
\caption{Typical lowest $i$-degree term. }\label{AMRclasper}
\end{figure}

By construction, each type (b) leaf has linking number $1$ with exactly one component of $I_{G}$ and $0$ with all 
others.  The lowest $i$-degree terms in $\check{V}_G([1_\Sigma;C(D)])$ are therefore a sum of looped diagrams plus a 
single Jacobi diagram with each type (b) leaf core connected by a strut to a latch core.

We can now apply the map $\iota_J$, and a computation shows that \\[0.3cm]
$\textrm{ }\qquad\textrm{}\qquad\textrm{}\qquad$\input{iota.pstex_t}. \\

We find two (and one respectively) such configurations for each edge incident (and not incident)
on a univalent vertex of $D$, and each comes from ($\star \star$) with a coefficient $\frac{1}{(2J-1)!}$.
This formula also shows that $\iota_J$ maps each Siamese diagram to a factor $(-1)^J (2J)!$, and there are $h=2g+n-1$ 
such diagrams.
We obtain that $\iota_J\left(\check{V}_J(\phi^G_{k,l}(D)\right)$ is given by
\[ (-1)^{(J-1)l+Jh} D + \left\{ \begin{array}{l}
\textrm{terms of $i$-degree $k$ with less than $l$ univalent vertices} \\
 \textrm{terms of $i$-degree $>k$,} \end{array}
\right. \]
\noindent where the terms of $i$-degree $k$ with less than $l$ univalent vertices arise from the looped Jacobi 
diagrams.

To conclude the computation, observe that the surgery link $L\left(C(D)\right)\cup K_G$ satisfies 
$\sigma^{L\left(C(D)\right)\cup K_G}_+=\sigma^{L\left(C(D)\right)\cup K_G}_-=3k+h$.
Since
$\iota_J(\check{Z}(U_{\pm}))=(\mp 1)^J + \textrm{terms of $i$-degree }\ge 1$, we therefore
find\[ \iota_J(\check{V}_P(U_{+}))^{\sigma^{L\left(C(D)\right)\cup K_G}_{+}} 
\iota_J(\check{V}_P(U_{-}))^{\sigma^{L\left(C(D)\right)\cup K_G}_{-}}=(-1)^{Jk+Jh} + ~\textrm{terms of $i$-degree }\ge
1. \]
It follows that
 \[ \nabla^G_{k,l}(\phi^G_{k,l}(D))=(-1)^{k} D, \]
 \noindent which concludes the proof of Theorem \ref{thm:univ}.
\subsection{The rigid $\nabla_G^r$ invariant}\label{sec:reduced}
In this section, we introduce a modified ``rigid'' version $\nabla_G^r$ of our invariant $\nabla_G$, which is formulated in terms of the LMO 
invariant of tangles and again depends on the choice of a marked fatgraph for $\Sigma$.
In this incarnation, $\nabla_G^r$ shares properties with the invariant defined in \cite{CHM}, which gives
an extension of the LMO invariant to so-called Lagrangian cobordisms between once-bordered and closed surfaces.  The invariant in 
\cite{CHM} depends upon choices similar to certain of those determined
by a fatgraph discussed here, and it induces a universal invariant for homology cylinders.
Roughly, it is defined by first ``capping off'' a cobordism by attaching $2$-handles along the boundary producing a 
tangle in a homology ball and then computing the LMO invariant (actually, the equivalent \AA rhus integral) of this 
tangle.

\subsubsection{The rigid $\nabla_G^r$ invariant for homology cylinders} \label{subsec:rednabla}
Let $G$ be a marked fatgraph in the bordered surface $\Sigma$.
Recall that the bigon $B$ in the polygonal decomposition $P_G$ of $\S$\ref{polydec} gives rise to the preferred box in 
$1_\Sigma$, which is identified with the standard cube $C=[0,1]^3$ so that the upper face $f=[0,1]^2\times \{1\}$ is 
the cutting face.
Denote by $F$ a collar neighbourhood $f\times [0,\varepsilon]$ of $f$ in $(1_\Sigma- C)$, where $f$ is identified with 
$f\times \{0\}$, and
fix the standard points $s_i:=\frac{i}{2h+1}\in [0,1]$, for $1\le i\le 2h$.

Consider in $1_\Sigma$ the system of linking pairs $K_G=\sqcup_{i=1}^h (l_i\sqcup m_i)$ determined by $G$ as in 
$\S$\ref{kpair}.
We assume that each $l_i$ lies in the surface $\Sigma\times \{\frac{i}{h+1} \}\subset 1_\Sigma$, for $1\le i\le h$ and 
meets $\partial F$ only at the points $s_j\times \{\frac{i}{h+1} \}\times \{\varepsilon \}$, for $j=i,i+1$, that each 
meridian $m_i$ intersects $\partial F$ only at the points $s_j\times \{\frac{1}{2} \}\times \{0\}$, again for 
$j=i,i+1$, and that $(m_i\cup l_i)\cap (F\cup C)$ is in the standard position depicted in Figure \ref{rigid} up to 
isotopy.
\begin{figure}[!h]
\input{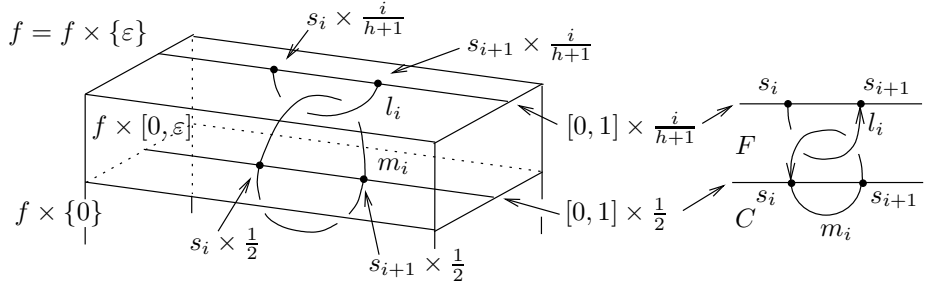}
\caption{System of linking pairs in rigid position. } \label{rigid}
\end{figure}
We say that $K_G$ is in \emph{rigid position} in $1_\Sigma$ in this case.

Suppose that $M=(1_\Sigma)_L$ is a cobordism over $\Sigma$ for some framed link $L$ in $1_\Sigma$.
By Lemma \ref{reducedform}, we can use the system of linking pairs $K_G$ in rigid position to obtain a reduced 
representative $L_0$ which  lies in the preferred box $C$.

Letting $I_G$ denote the system of latches determined by $G$ (cf.\ $\S$\ref{kpair}),
cut $1_\Sigma$ along $f$ in order to split $I_G\cup K_G\cup L_0$ into two q-tangles
\begin{equation}\label{deftgtm}
T_G:= (I_G)\cup (K_G\cap (1_\Sigma- C))
\quad \textrm{ and } \quad
 (K_G\cap C) \cup ( L_0),
\end{equation}
where the bracketing
$(\bullet \bullet) \Big( (\bullet \bullet) \big( (\bullet \bullet)\cdots \left((\bullet \bullet) (\bullet 
\bullet)\right)\cdots\big) \Big)$ is taken on both sets of boundary points.
Set
\begin{equation} \label{nablaredn}
\widehat\nabla^{G,r}_n(L):= \frac{\iota_n(\check{Z}( (K_G\cap C) \cup L_0))}
{ \iota_n(\check{Z}(U_{+}))^{\sigma^{L}_+} \iota_n(\check{Z}(U_{-}))^{\sigma^{L}_-} }\in \mathcal{A}_{\le
n}(\uparrow^{h}),
\end{equation}
where we make use of the same notation as for (\ref{nabla+}), and define the {\it rigid $\widehat \nabla^r_G$ 
invariant} of $M$ to be
\begin{equation}\label{rednabla}
\widehat \nabla^r_G(M):= 1+\big(\widehat \nabla^{G,r}_1(L) \big)_1 + \big(\widehat \nabla^{G,r}_2(L) \big)_2 +\dotsm
\in \mc{A}(\uparrow^h) .
\end{equation}
As before, we define the corresponding \emph{rigid $\nabla^r_G$ invariant}  as the composition of $\widehat\nabla^r_G$ 
with the projection $\mc{A}(\uparrow^h) \ra \mc{A}_h$.

In the next section, we prove the following:
\begin{theorem}\label{thmrednabla}
$\nabla_G^r$ is an invariant of homology cylinders and
induces a graded isomorphism
\[ \nabla^r_G\colon \overline{\mathcal{H}}_\Sigma \rightarrow \mathcal{A}_h \]
for any marked fatgraph $G$ in $\Sigma$.  
\end{theorem}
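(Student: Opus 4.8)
The plan is to deduce Theorem \ref{thmrednabla} from the universality of $\nabla_G$ already established in Theorem \ref{thm:univ} and Corollary \ref{cor:iso}, by showing that $\nabla^r_G$ computes the same information as $\nabla_G$ up to a fixed, cylinder‑independent change of coordinates on $\mc{A}_h$. First I would check that $\widehat\nabla^{G,r}_n$, and hence $\nabla^r_G$, is well defined and an invariant of homology cylinders. The only choice entering \eqref{nablaredn} is the reduced representative $L_0$ furnished by Lemma \ref{reducedform}, and two such representatives differ, after a preliminary isotopy, by Kirby II moves and isotopies supported inside the preferred box $C$ (with band slides taking place inside boxes). Exactly as in the proof of Theorem \ref{thm:inv}, such a Kirby II move changes $\check Z\big((K_G\cap C)\cup L_0\big)$ by a chord KII move (using that our associator is even, so $\check Z$ satisfies \eqref{evencable}), and $\iota_n\circ\check Z$ is insensitive to chord KII moves by the very construction of $\iota_n$; the factors $\sigma^L_\pm$ in the denominator absorb the Kirby I ambiguity of isolated $\pm1$-framed unknots, and orientation‑independence is as before. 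Thus $\nabla^r_G$ descends to $\mc{C}(\Sigma)$, in particular to $\mathcal{HC}(\Sigma)$, and extends linearly and continuously to $\overline{\mathcal{H}}_\Sigma$.

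The heart of the argument is then the comparison of $\nabla^r_G$ with $\nabla_G$ on a homology cylinder $M=(1_\Sigma)_L$. By Lemma \ref{reducedform} and the isotopy/Kirby II invariance of $\nabla_G$ (Theorem \ref{thm:inv}), we may compute $\nabla_G(M)$ from the reduced link $L_0\cup K_G\cup I_G$, with $K_G$ in rigid position and $L_0\subset C$. By Lemma \ref{comb} this link meets every box of $P_G$ other than the preferred one in a trivial q‑tangle, and the system of latches $I_G$ is fixed, so unwinding the definition of the AMR invariant $V_G$ along the polygonal decomposition $P_G$ shows that $\check V_G(L_0\cup K_G\cup I_G)$ is obtained from $\check Z\big((K_G\cap C)\cup L_0\big)$ by stacking on the link‑independent q‑tangle $T_G=(I_G)\cup\big(K_G\cap(1_\Sigma- C)\big)$ of \eqref{deftgtm} and closing up along $f$. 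Since $T_G$ carries no closed components, this stacking operation commutes with $\iota_n$ (which only concerns the circle components coming from $L_0\cup K_G$) and with the normalisation by the unknot factors, so after projecting to $\mc{A}_h$ we obtain
\[
\nabla^r_G=\Xi\circ\nabla_G,
\]
where $\Xi$ is the automorphism of $\mc{A}_h$ obtained by stacking with $\check Z(T_G)$ followed by $\mc{A}(\uparrow^h)\cong\mc{B}(h)\to\mc{A}_h$. Because $\check Z(T_G)=1+(\text{higher }i\text{-degree})$, the operation $\Xi$ has the form $\mathrm{id}+(\text{strictly filtration-increasing})$ and is therefore a filtered automorphism of the complete filtered space $\mc{A}_h$; combined with Corollary \ref{cor:iso} this gives that $\nabla^r_G\colon\overline{\mathcal{H}}_\Sigma\to\mc{A}_h$ respects the filtrations and induces an isomorphism on associated graded spaces, which is the asserted graded isomorphism. (If the conventions are arranged so that $T_G$ contributes trivially, then $\Xi=\mathrm{id}$ and $\nabla^r_G=\nabla_G$ outright.)

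The main obstacle is the middle step: making precise that running the AMR construction on a link supported in the preferred box really does reduce to the honest Kontsevich integral of that preferred‑box q‑tangle modulo a fixed contribution of the complementary surface, identifying that contribution with the cut‑open tangle $T_G$, and verifying that it slides past both $\iota_n$ and the $\sigma^L_\pm$‑normalisation. One should also confirm that the grading conventions match, i.e.\ that the $i$‑degree $\le k$ part of $\nabla^r_G$ is finite type of degree $k$ and that $\nabla^r_G$ inverts the surgery maps $\phi^G_{k,l}$; this may be read off from $\nabla^r_G=\Xi\circ\nabla_G$ together with Facts (1) and (2) in the proof of Theorem \ref{thm:univ}, or else proved directly by repeating that computation with $\check V_G$ replaced by $\check Z$, again using the property $(\star)$ of the Kontsevich integral and the $\iota_J$‑evaluation of Siamese diagrams.
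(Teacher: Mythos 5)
Your invariance argument has a genuine gap. You assert that two reduced representatives of the same cobordism differ by isotopies and Kirby II moves ``supported inside the preferred box,'' so that invariance follows exactly as in Theorem \ref{thm:inv}. But Lemma \ref{reducedform} produces $L_0$ only after Kirby II slides over the longitudes of $K_G$, which lie entirely outside the cube $C$, and two surgery presentations $L,L'$ of the same cobordism are related by KI--KIII moves in all of $1_\Sigma$; after reduction there is no reason the two links in $C$ are related by moves supported in $C$, and after cutting along $f$ the ambient moves are not Kirby moves of the tangle $(K_G\cap C)\cup L_0$ at all. Indeed, if your direct argument worked it would prove that $\nabla^r_G$ is an invariant of arbitrary cobordisms over $\Sigma$, which the paper explicitly says it cannot do (this is precisely why the invertibility of $J_G$ is raised as an open question). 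The paper's proof obtains invariance for homology cylinders only, and only indirectly: it proves that $Z^{LMO}$ is universal for the tangles $\mathcal{T}(h)$ (Proposition \ref{thmLMOuniv}), shows that the stacking map $\overline{J}_G$ is a graded isomorphism (Lemma \ref{lemjG}, via the commutative triangle with the surgery maps $\phi^r_{k,l}$ and $\phi^G_{k,l}$), and then identifies $\nabla^r_G=Z^{LMO}\circ(\overline{J}_G)^{-1}$, which yields well-definedness on $\mathcal{HC}(\Sigma)$ and the graded isomorphism simultaneously.

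The comparison step is also not correct as stated. You claim $\nabla^r_G=\Xi\circ\nabla_G$ with $\Xi$ given by stacking with $\check{Z}(T_G)$, on the grounds that $T_G$ has no closed components and hence the stacking commutes with $\iota_n$. But the circles on which $\iota_n$ acts in $\nabla_G$ include the meridians and longitudes of $K_G$: in rigid position each $m_i$ is cut by $f$ into an arc in $C$ and an arc in $T_G$, and each $l_i$ lies wholly on the $T_G$ side, so these closed components only exist \emph{after} the gluing and one cannot apply $\iota_n$ to the cube part first and stack afterwards. The true relation is \eqref{cg}, $\nabla_G(M)=\nabla^r_G(M)\odot T_G$, going in the opposite direction, and its proof requires Lemma \ref{lemglueLMO} and Proposition \ref{proplambda} (the $\Lambda$-elements and the maps $j^V_k$), not a plain stacking; the differing normalizations $\sigma^{L\cup K_G}_\pm$ versus $\sigma^{L}_\pm$ are likewise absorbed there. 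Even granting \eqref{cg}, deducing the theorem from Corollary \ref{cor:iso} would still require injectivity of $\cdot\odot T_G$ (or an independent surjectivity argument for $\nabla^r_G$), and your assertion that $\Xi=\mathrm{id}+(\text{filtration-increasing})$ cannot be read off from $\check{Z}(T_G)=1+\cdots$; it is exactly the kind of lowest-degree computation that the paper performs via facts (1) and (2) in the proof of Theorem \ref{thm:univ} and again in Proposition \ref{thmLMOuniv} and Lemma \ref{lemjG}.
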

\noindent Though $\nabla^r_G$ is defined for any cobordism over $\Sigma$, we can at present
only prove it is an invariant of homology cylinders; cf.\ the next section.
\begin{rem}\label{remnablaredlmo}
Recall that the LMO invariant extends naturally to q-tangles in homology balls.  This is done in a similar manner to 
the extension to links in $3$-manifolds \cite{LMO}, and more generally to framed graphs in $3$-manifolds \cite{MO}, via 
a formula similar to (\ref{nablaredn}).  Using this extension, we can reformulate (\ref{rednabla}) as
$$ \widehat\nabla^r_G(M) := Z^{LMO}(B_M,\gamma_M)\in \mathcal{A}(\uparrow^h), $$
where $(B_M,\gamma_M)$ denotes the result of surgery on $(C,K_G\cap C)$ along the link $L_0$.  Note that $B_M$ is indeed 
a homology ball since $M$ is a homology cylinder.
\end{rem}

\subsubsection{Proof of Theorem \ref{thmrednabla}}\label{univtangles}

For $h\ge 1$, let $T_{h,0}=\bigotimes_h C_+$ be the q-tangle in $C$ obtained by horizontal juxtaposition of $h$ copies 
of the q-tangle $C_+$ of Figure \ref{elementary}, with bracketing of the form $(\bullet \bullet) \Big( (\bullet 
\bullet) \big( (\bullet \bullet)\cdots \left((\bullet \bullet) (\bullet \bullet)\right)\cdots\big) \Big)$.  See Figure
\ref{tcap} and set
 \[ \mathcal{T}(h):=\{ (C,T_{h,0})_{\Gamma}\textrm{ : $\Gamma$ is a clasper in $C$ disjoint from $T_{h,0}$} \}. \]
Let $ \mathbb{Q}\mathcal{T}(h)$ denote the vector space freely generated by elements of $\mathcal{T}(h)$.
In analogy to (\ref{GHfiltration}), we have the Goussarov-Habiro filtration
 \[ \mathbb{Q}\mathcal{T}(h)\supset \mathcal{F}_1(h)\supset\mathcal{F}_2(h) \supset ... \]
where $\mathcal{F}_k(h)$  denotes the subspace generated by elements $[(B,\gamma);\Gamma]$ with $(B,\gamma)\in 
\mathcal{T}(h)$ and with $\Gamma$ a degree $\ge k$ clasper in $B$ disjoint from $\gamma$, for $k\ge 1$.  This
filtration serves to define
a notion of finite type invariants for these objects as in $\S$\ref{fti}.
\begin{proposition} \label{thmLMOuniv}
For any $h\ge 1$, the LMO invariant induces a universal finite type invariant for  tangles in $\mathcal{T}(h)$.
\end{proposition}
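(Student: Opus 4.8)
The plan is to reproduce, in this simpler setting, the two–step argument used for Theorem~\ref{thm:univ}: first show that the $i$-degree $\le k$ truncation of the (tangle) LMO invariant is a finite type invariant of degree $k$ on $\mathcal{T}(h)$, and then show that on the associated graded spaces it is inverse, up to sign, to an explicit surjective surgery map, so that both are isomorphisms and $Z^{LMO}$ is universal. Recall (Remark~\ref{remnablaredlmo}) that $Z^{LMO}$ is defined on q-tangles in homology balls, so in particular on every $(C,T_{h,0})_\Gamma \in \mathcal{T}(h)$, with values in $\mathcal{A}(\uparrow^h)$.

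First I would set up the combinatorial apparatus exactly as in $\S$\ref{surgmap}. Refine the Goussarov--Habiro filtration $\mathcal{F}_k(h)$ by the number of leaves to get subspaces $\mathcal{F}_{k,l}(h)$ and graded pieces $\mathcal{G}_{k,l}(h)$, and define a surgery map $\psi_{k,l}\colon \mathcal{B}^\mathsf{Y}_{k,l}(h)\to \mathcal{G}_{k,l}(h)$ sending a Jacobi diagram $D$ to $[(C,T_{h,0});\Gamma(D)]$, where $\Gamma(D)$ is a clasper in $C$ disjoint from $T_{h,0}$ whose nodes realize the trivalent vertices of $D$ (with matching cyclic orders) and whose leaf labelled $i$ is a small $0$-framed meridian of the $i$-th strand of $T_{h,0}$. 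By the clasper calculus underlying \cite[Theorem~1]{brane} (see also \cite{Habegger,GGP,Habiro}), $\psi_{k,l}$ is well defined and surjective, just as $\phi^G_{k,l}$ was.

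Fact (1): a degree $\le k$ clasper $\Gamma$ in $(B,\gamma)\in\mathcal{T}(h)$ gives, in the alternating sum $[(B,\gamma);\Gamma]$, only terms of $i$-degree $\ge k$ in the tangle LMO invariant; this is the standard consequence of property $(\star)$ of the Kontsevich integral (replacing a Borromean linking at a node by a trivial one changes $Z$ by a single trivalent vertex plus higher $i$-degree), so $k$ nodes force $i$-degree $\ge k$, and the $i$-degree $\le k$ part of $Z^{LMO}$ kills $\mathcal{F}_{k+1}(h)$. Fact (2): compute $Z^{LMO}(\psi_{k,l}(D))$ at lowest $i$-degree. Put $\Gamma(D)$ in convenient position with its nodes in small balls and its leaves as small meridians of the strands of $T_{h,0}$ as in Figure~\ref{main}; since $T_{h,0}=\bigotimes_h C_+$ has no crossings and no associators, the only contributions at lowest $i$-degree are the trivalent vertices from $(\star)$ at the nodes and the struts from $(\star\star)$ on the leaf and Borromean cores, the whole analysis being that of the proof of Theorem~\ref{thm:univ} with the latch/linking-pair bookkeeping replaced by the single fact that each leaf labelled $i$ has linking number $1$ with the $i$-th strand of $T_{h,0}$ and $0$ with the others. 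Applying $\iota_J$ and the same local identities ($\iota_k$ of a wheel-type configuration, $\iota_J$ of a Siamese-type diagram) yields $Z^{LMO}(\psi_{k,l}(D)) = \pm D$ modulo terms of strictly lower leaf-filtration (the ``looped'' diagrams) and of higher $i$-degree. Hence the induced map $\mathcal{G}_{k,l}(h)\to \mathcal{B}^\mathsf{Y}_{k,l}(h)$ is triangular with $\pm 1$ on the diagonal and, together with surjectivity of $\psi_{k,l}$, it is an isomorphism; assembling over all $(k,l)$ gives universality.

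The main obstacle is, as before, the sign-and-coefficient bookkeeping in Fact (2): verifying that the degenerate (looped) contributions all drop the leaf-filtration and that the surviving leading term carries an unambiguous sign, so that $\psi_{k,l}$ and $Z^{LMO}$ are genuinely mutually inverse. Since this computation is entirely parallel to, and no harder than, the one already done for Theorem~\ref{thm:univ}, an alternative and shorter route is simply to observe that $\mathcal{T}(h)$ is a version of the bottom q-tangles in homology balls for which universality of the \AA rhus/LMO invariant is established in \cite{CHM} (cf.\ also \cite{Habegger,massuyeau}), and to cite that result directly.
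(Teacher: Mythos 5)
Your proposal is correct and follows essentially the same route as the paper: the paper defines the surgery map $\phi^r_{k,l}$ (your $\psi_{k,l}$) with leaves given by $0$-framed meridians of the strands of $T_{h,0}$ and then invokes the analogues of Facts (1) and (2) from the proof of Theorem \ref{thm:univ}, exactly as you do. The only difference is your optional shortcut of citing \cite{CHM}, which the paper mentions as related but does not use.
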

\begin{proof}
The proof follows closely that of Theorem \ref{thm:univ}.
In particular as in $\S$\ref{surgmap}, we define for each pair $(k,l)$ with $k\ge 1$ and $0\le l\le 3k$ a surgery map 
$\phi^r_{k,l}$ as follows.
Let  $D\in \mathcal{B}^\mathsf{Y}_{k,l}(h)$.
For each $i$-labeled univalent vertex,
pick a parallel copy of a small $0$-framed meridian of the $i$th component of $T_{h,0}$,
and for each trivalent vertex of $D$, pick an embedded oriented disk in $C$.  Connect these meridians and disks by 
disjoint bands as prescribed by the diagram $D$ to obtain a degree $k$ clasper with $l$ leaves denoted $C^r(D)$.
The assignment $\phi^r_{k,l}(D) := [(C,T_{h,0}) ; C^r(D)]$ yields a well-defined surjective map
\[ \phi^r_{k,l}:\mathcal{B}^\mathsf{Y}_{k,l}({h})\to \mathcal{G}_{k,l}(h), \]
where $\mathcal{G}_{k,l}(h)$ is defined as in $\S$\ref{surgmap}.
The rest of the proof follows from the analogues of facts (1) and (2) of $\S$\ref{sec:proofunivnabla}, which hold 
according to exactly the same arguments.
\end{proof}

As a consequence, we have a graded isomorphism
 \[ Z^{LMO}\colon \overline{\mc{T}}(h)\xra{\cong}\mc{A}_{h} \]
induced by the LMO invariant, where
$\overline{\mathcal{T}}(h)$ denotes the quotient $$\bigl ({\rm degree~completion~of}~\mathbb{Q}\mathcal{T}(h)\bigr ) / \left(\cap_{k\ge 1} 
\mathcal{F}_k(h)\right).$$
The inverse isomorphism, denoted $\phi^r$, is induced by the surgery maps $\phi^r_{k,l}$.

We can now proceed with the proof of Theorem \ref{thmrednabla}.
For any marked fatgraph $G$ in $\Sigma$, define a map
\[ J_G: \mathcal{T}(h)\rightarrow \mathcal{HC}(\Sigma) \]
as follows.  If
$(B,\gamma)= (C,T_{h,0})_{\Gamma}\in \mathcal{T}(h)$, where $\Gamma$ is some clasper in $C$ disjoint from $T_{h,0}$,
then $J_G(B,\gamma)$ is the homology cylinder obtained by stacking the tangle $T_G$ defined in (\ref{deftgtm}) above 
$(B,\gamma)$ and performing surgery along the $2h$-component link resulting from this stacking.  We shall give in 
Remark \ref{remgluenabla} a purely diagrammatic version of the map $J_G$ for any marked bordered fatgraph $G$.
As a generalization of the Milnor-Johnson correspondence
of Habegger \cite{Habegger}, we wonder if $J_G$ is invertible; if so, then it would follow that the rigid invariant $\nabla _G^r$ is indeed an invariant not just of homology cylinders but also of general cobordisms over $\Sigma$.

Since $J_G(\cap_{k\ge 1} \mathcal{F}_k(h))\subset \left(\cap_k \mathcal{F}_k(\Sigma)\right)$,
there is an induced map
\[ \overline{J}_G: \overline{\mathcal{T}}(h)\rightarrow \overline{\mathcal{H}}_\Sigma, \]
which  is surjective according to Lemma \ref{reducedform}.

\begin{lemma}\label{lemjG}
The map $\overline{J}_G$ is a graded isomorphism.
\end{lemma}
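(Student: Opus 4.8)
The plan is to prove that $\overline{J}_G$ is a graded isomorphism by exhibiting an explicit inverse and checking that both compositions are the identity at the level of the associated graded. Since we have already established (Proposition \ref{thmLMOuniv} and its corollary) the graded isomorphism $Z^{LMO}\colon \overline{\mathcal{T}}(h)\xrightarrow{\cong}\mathcal{A}_h$ with inverse $\phi^r$, and since Theorem \ref{thm:univ}/Corollary \ref{cor:iso} give the graded isomorphism $\nabla_G\colon\overline{\mathcal{H}}_\Sigma\xrightarrow{\cong}\mathcal{A}_h$ with inverse assembled from the surgery maps $\phi^G_{k,l}$, it suffices to show that the diagram
\begin{equation*}
\overline{\mathcal{T}}(h)\xrightarrow{\ \overline{J}_G\ }\overline{\mathcal{H}}_\Sigma,\qquad
Z^{LMO}\colon\overline{\mathcal{T}}(h)\xrightarrow{\cong}\mathcal{A}_h,\qquad
\nabla_G^r\colon\overline{\mathcal{H}}_\Sigma\to\mathcal{A}_h
\end{equation*}
commutes up to a sign or grading-preserving automorphism, i.e.\ that $\nabla_G^r\circ\overline{J}_G$ agrees with $Z^{LMO}$ (possibly up to the same sign $(-1)^k$ in degree $k$ that already appears in Fact (2)). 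Combined with the fact that $\overline{J}_G$ is already known to be surjective (noted just before the lemma, via Lemma \ref{reducedform}), commutativity of this triangle with two legs known to be isomorphisms forces $\overline{J}_G$ to be injective as well, hence an isomorphism; that it is graded is immediate since $J_G$ is defined by stacking a fixed tangle, which does not change clasper degree.

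First I would unwind the definitions to prove the commutativity $\nabla_G^r\circ\overline{J}_G = Z^{LMO}$ on the nose (before passing to associated graded). By definition, $J_G(B,\gamma)$ for $(B,\gamma)=(C,T_{h,0})_\Gamma$ is obtained by stacking $T_G = I_G\cup(K_G\cap(1_\Sigma - C))$ on top of $(B,\gamma)$ and surgering along the resulting $2h$-component link. On the other hand, $\widehat\nabla_G^r$ of a homology cylinder $M=(1_\Sigma)_L$ is, by Remark \ref{remnablaredlmo}, exactly $Z^{LMO}(B_M,\gamma_M)$ where $(B_M,\gamma_M)$ is obtained from $(C, K_G\cap C)$ by surgery along the reduced representative $L_0$. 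So the key point is: when $M=J_G(B,\gamma)$, a reduced representative $L_0$ for the surgery link can be taken to be precisely the clasper $\Gamma$ (already sitting inside $C$ and disjoint from $T_{h,0}$), and then $(B_M,\gamma_M)$ is by construction $(C,T_{h,0})_\Gamma = (B,\gamma)$ itself, with the bracketings matching because both are the standard left-nested bracketing $(\bullet\bullet)\bigl((\bullet\bullet)\bigl(\cdots\bigr)\bigr)$ fixed in $\S$\ref{subsec:rednabla} and $\S$\ref{univtangles}. Here I would invoke that $K_G$ is placed in rigid position (Figure \ref{rigid}) exactly so that cutting $1_\Sigma$ along $f$ separates $I_G\cup K_G\cup L_0$ into $T_G$ and $(K_G\cap C)\cup L_0$ in the standard form of (\ref{deftgtm}); this is the geometric heart of the matching.

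The main obstacle I anticipate is the bookkeeping of framings, normalization constants, and the Kirby-move ambiguity in choosing $L_0$. The reduced representative $L_0$ produced by Lemma \ref{reducedform} is non-unique, so I would need to check that the identification of $L_0$ with $\Gamma$ up to isotopy and Kirby II moves is compatible with the normalization $\iota_n(\check Z(U_\pm))^{\sigma^L_\pm}$ appearing in (\ref{nablaredn}), using that $\sigma^L_\pm$ is a Kirby-II invariant and that $\Gamma$, being a clasper, has signature contributions already accounted for in Proposition \ref{thmLMOuniv}'s normalization of $Z^{LMO}$ on $\mathcal{T}(h)$. Once this is settled, commutativity $\nabla_G^r\circ\overline{J}_G=Z^{LMO}$ holds as an equality of maps to $\mathcal{A}(\uparrow^h)$ (hence to $\mathcal{A}_h$ after projection), and the isomorphism statement follows formally: $\overline{J}_G = (\nabla_G^r)^{-1}\circ Z^{LMO}$ is a composition of isomorphisms, where $(\nabla_G^r)^{-1}$ exists and is graded because $\nabla_G^r$ agrees with $\nabla_G$ on homology cylinders up to the rigidity reparametrization — which I would note is the content needed to cite Theorem \ref{thmrednabla}'s being an isomorphism, so some care is required not to argue circularly; I would instead directly use that $\nabla_G^r$ restricted to $\overline{\mathcal{H}}_\Sigma$ is injective because $Z^{LMO}\colon\overline{\mathcal{T}}(h)\to\mathcal{A}_h$ is injective and $\overline{J}_G$ is surjective, which is exactly the logical structure that makes this lemma the engine driving the proof of Theorem \ref{thmrednabla}.
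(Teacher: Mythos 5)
There is a genuine circularity in your plan. Your argument needs $\nabla^r_G$ to exist as a well-defined linear map $\overline{\mathcal{H}}_\Sigma\to\mathcal{A}_h$ through which $Z^{LMO}$ factors: injectivity of $\overline{J}_G$ in your scheme is deduced from $\overline{J}_G(y)=0\Rightarrow Z^{LMO}(y)=\nabla^r_G(\overline{J}_G(y))=0$. But at this point in the paper nothing is known about $\nabla^r_G$ beyond the formula (\ref{nablaredn}) evaluated on a chosen reduced representative: its independence of the surgery presentation and of the (non-unique) reduced representative $L_0$, and the finite-type property needed for it to kill $\cap_k\mathcal{F}_k(\Sigma)$ and descend to the completion $\overline{\mathcal{H}}_\Sigma$, are precisely the content of Theorem \ref{thmrednabla}, which the paper deduces \emph{from} Lemma \ref{lemjG} via $\nabla^r_G=Z^{LMO}\circ(\overline{J}_G)^{-1}$ (indeed the paper stresses that invariance of $\nabla^r_G$, and invertibility of $J_G$ for general cobordisms, are otherwise open). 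Your closing remark does not escape this: ``$\nabla^r_G$ restricted to $\overline{\mathcal{H}}_\Sigma$ is injective'' still presupposes that $\nabla^r_G$ is a map on $\overline{\mathcal{H}}_\Sigma$. More structurally, since $Z^{LMO}$ is injective on $\overline{\mathcal{T}}(h)$, the existence of \emph{any} linear $F\colon\overline{\mathcal{H}}_\Sigma\to\mathcal{A}_h$ with $F\circ\overline{J}_G=Z^{LMO}$ is logically equivalent to $\ker\overline{J}_G=0$, so the factorization cannot be assumed; it is the thing to be proved. Your ``on the nose'' computation (for $M=J_G(B,\gamma)$ one may take $L_0=\Gamma$ and then formula (\ref{nablaredn}) returns $Z^{LMO}(B,\gamma)$ by Remark \ref{remnablaredlmo}) is correct for that particular choice, but it says nothing about other presentations of the same $M$, which is exactly where well-definedness fails to be automatic: different reduced representatives are related by Kirby moves in all of $1_\Sigma$, not by moves supported in the cube, so the LMO-style invariance arguments do not directly apply.

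The paper avoids this entirely by never using $\nabla^r_G$ in the proof of the lemma. It works on the associated graded pieces and commutes the \emph{surgery maps} rather than the invariants: for each $(k,l)$ it shows $J^{k,l}_G\circ\phi^r_{k,l}=\phi^G_{k,l}$ on $\mathcal{B}^{\mathsf{Y}}_{k,l}(h)$ by an explicit clasper manipulation (stacking $T_G$ on $C^r(D)$ yields $K_G$ in rigid position together with a clasper whose leaves are meridian copies of the $m_i$; Kirby II slides of these leaves along the longitudes $l_i$ turn it into the clasper $C(D)$ defining $\phi^G_{k,l}(D)$), and then concludes because $\phi^r_{k,l}$ and $\phi^G_{k,l}$ are isomorphisms by the universality computations (Fact (2) in the proof of Theorem \ref{thm:univ} and its analogue in Proposition \ref{thmLMOuniv}); surjectivity of $J^{k,l}_G$ comes from Lemma \ref{reducedform}. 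Note that the geometric observation you want to make is essentially the same Kirby II/clasper matching, but packaged at the level of $\mathcal{G}_{k,l}(h)\to\mathcal{G}_{k,l}(\Sigma)$, where no well-definedness of an invariant is at stake. If you reformulate your argument in those terms --- comparing $J^{k,l}_G\circ\phi^r_{k,l}$ with $\phi^G_{k,l}$ instead of $\nabla^r_G\circ\overline{J}_G$ with $Z^{LMO}$ --- the circularity disappears and you recover the paper's proof.
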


\noindent This implies Theorem \ref{thmrednabla} since (\ref{rednabla}) can thus be rewritten as 
$\nabla^r_G=Z^{LMO}\circ (\overline{J}_G)^{-1}$.

\begin{proof}[Proof of Lemma \ref{lemjG}]
For each pair $(k,l)$ with $k\ge 1$ and $0\le l\le 3k$, consider the surjective map $J^{k,l}_G: \mathcal{G}_{k,l}(h) 
\rightarrow \mathcal{G}_{k,l}(\Sigma)$ induced by $J_G$.  It suffices to show that $J^{k,l}_G$ is a graded isomorphism,
which follows from commutativity of the following diagram:
\[  \xymatrix{
     \mathcal{B}^\mathsf{Y}_{k,l}(h)
     \ar[d]_{\phi^r_{k,l}} \ar[dr]^{\textrm{ }\phi^G_{k,l}}\\
     \mathcal{G}_{k,l}(h) \ar[r]_{J^{k,l}_G}& \mathcal{G}_{k,l}(\Sigma).
    }
\]
To see that this diagram is in fact commutative,
let $D\in \mathcal{B}^\mathsf{Y}_{k,l}(h)$ and $C^r(D)\subset C$ be the clasper obtained by the construction explained in the 
proof of Proposition \ref{thmLMOuniv}, whence $\phi^r_{k,l}(D)=[(C,T_{h,0});C^r(D)]$.
Applying $J^{k,l}_G$ amounts to stacking the tangle $T_G$ on $(T_{h,0}\cup C^r(D))\subset C$.  The result of this 
stacking is the system of linking pairs $K_G$ in rigid position, together with a clasper with $l$ leaves in $1_\Sigma$, 
each leaf being a disjoint copy of a $0$-framed meridian of the component $m_i$ of $K_G$, for some $i$.
By a Kirby KII move, we can slide each of these leaves along the corresponding longitude component $l_i$ of $K_G$ and 
denote by $\Gamma$ the resulting clasper in $1_\Sigma$.  It follows from the definition of the surgery map $\phi_{k,l}$ 
(see $\S$\ref{surgmap}) that
$[1_\Sigma;\Gamma]=\phi_{k,l}(D)$ in $\mathcal{G}_{k,l}(\Sigma)$ as required.
\end{proof}
\section{Diagrammatic formulations of topological gluings}\label{sec:pairings}
Throughout this section, fix a non-negative integer $g$ as well as a closed genus $g$ surface $\Sigma_g$ which we 
identify with the boundary of the standard genus $g$ handlebody $H_g:= \Sigma_{0,g+1}\times I$, where $\Sigma_{0,g+1}$ 
is a fixed disc in the plane with basepoint on its boundary having   $g$ holes ordered and arranged from left to right.   
We also fix a genus $g$ surface  with one boundary component $\Sigma_{g,1}$ and identify $\Sigma_g=\Sigma_{g,1}\cup 
D^2$ with the closed surface obtained by capping off $\Sigma_{g,1}$ with a disc $D^2$, so that $\partial \Sigma_{g,1}
\subset (\partial \Sigma_{0,g+1})\times I$.
\subsection{Topological operations}\label{gluings}
\subsubsection{Homology handlebodies}\label{handlebodies}
A genus $g$ \emph{homology handlebody} is a 3-manifold $M$ with boundary a closed genus $g$ surface $\Sigma$ such that 
the inclusion $\Sigma\hra M$  induces a surjection in integral homology  with kernel a maximal integral 
isotropic subgroup $\Lambda\subset H_1(\Sigma_g;\bZ)$; in this definition, we always require an identification of the 
boundary $\Sigma$ of $M$ with the fixed surface $\Sigma_g$ and call $\Lambda$ the \emph{Lagrangian} of the handlebody.
 For example, $H_g$ is a homology handlebody, whose associated Lagrangian subspace $\Lambda^{st}$ we call  the 
 \emph{standard Lagrangian} of $\Sigma_g$.

We consider two homology handlebodies $v_1$ and $v_2$ equivalent if there is a diffeomorphism of $v_1$ to $v_2$ which 
restricts to the identity on $\Sigma_g$ under the corresponding identifications.  Denote by $V(\Sigma_g,\Lambda)$ the 
set of equivalence classes of homology handlebodies with Lagrangian $\Lambda$.

By a result of Habegger \cite{Habegger}, any two homology handlebodies are related by clasper surgeries  if and only if 
they have the same induced Lagrangian.
In particular, any genus $g$ homology handlebody with Lagrangian $\Lambda^{st}$ can be obtained by clasper surgery in 
$H_g$.  In other words, we have
$$ V(\Sigma_g,\Lambda^{st})={\mathcal HC}(\Sigma_{0,g+1}). $$
\subsubsection{Stacking, shelling and pairing}\label{stdpairings}
We have already defined in ($\S$\ref{fti}) the natural stacking product for homology cylinders, which induces a map
\[ \cdot \colon \overline{\mc{H}}_{\Sigma_{g,1}}\times \overline{\mc{H}}_{\Sigma_{g,1}}\rightarrow 
\overline{\mc{H}}_{\Sigma_{g,1}}, \]
and we next similarly introduce two further products
$$\aligned
\cup_\iota\colon \overline{\mc{H}}_{\Sigma_{0,g+1}}\times \overline{\mc{H}}_{\Sigma_{0,g+1}}&\ra
\overline{\mc{H}}_{\Sigma_{0,1}},\\
\ast\colon \overline{\mc{H}}_{\Sigma_{g,1}}\times \overline{\mc{H}}_{\Sigma_{0,g+1}}&\rightarrow
\overline{\mc{H}}_{\Sigma_{0,g+1}}.\\
\endaligned$$

The pairing $\cup_\iota$ on the vector space $\overline{\mc{H}}_{\Sigma_{0,g+1}}$ is defined as follows.
Consider the standard orientation-reversing map $\iota\colon \Sigma_g\ra \Sigma_g$ which ``takes longitudes  to 
meridians'' and vice versa so that gluing two copies of $H_g$ along their boundaries via $\iota$ produces the standard 
3-sphere $S^3$.
By gluing two arbitrary handlebodies with boundary $\Sigma_g$ along this map
(i.e., their adjunction space collapsing fibers to points), we obtain a closed 3-manifold and refer to this operation 
as their  \emph{pairing}.  Observe that the pairing of two homology handlebodies in 
$\overline{\mc{H}}_{\Sigma_{0,g+1}}$ is an integral homology 3-sphere, or equivalently, a homology cylinder over 
$\Sigma_{0,1}$, as required.

The  \emph{shelling product} $\ast$ is defined as follows.  Given a genus $g$ homology handlebody $H$ and a homology 
cylinder $(N,i^+,i_-)$ over $\Sigma_g$ in the notation of $\S$ref{fti}, we can glue the boundaries via the identification $\partial H=i^-(\Sigma_{g})\subset 
\partial N$ to obtain a new genus $g$ homology handlebody with boundary $i^+(\Sigma_g)$.
Similarly, given the identification  $\Sigma_g=\Sigma_{g,1}\cup D^2$,  we can glue a homology cylinder $M$ over 
$\Sigma_{g,1}$ to the homology handlebody $H$ to obtain a 3-manifold with boundary $i^+(\Sigma_{g,1})\cup (S^1\times 
I)\cup D^2$.  By gluing a cylinder $D^2\times I$ along $(S^1\times I)\cup D^2$ in the standard way, we obtain a new
genus $g$ homology handlebody $M\ast H$.

To illustrate the shelling product, let $\{a_i,b_i\}_{i=1}^g$, respectively, $\{h_i\}_{i=1}^g$, be the collection of 
disjoint loops in $\Sigma_{g,1}\times I$, respectively, in the genus $g$ handlebody $H_g$, shown in Figure 
\ref{figshell}.  Note that each collection  induces a basis for the first homology group of the corresponding 
3-manifold.
The images of these loops under  the shelling product $H_g=(\Sigma_{g,1}\times I)\ast H_g$, which we still denote by 
$a_i$, $b_i$ and $h_i$, are shown on the right-hand side of the figure.  In particular, note that each $b_i$ is 
null-homotopic in $H_g$, and satisfies $|lk(b_i,h_i)|=1$.
 \begin{figure}[h!]
  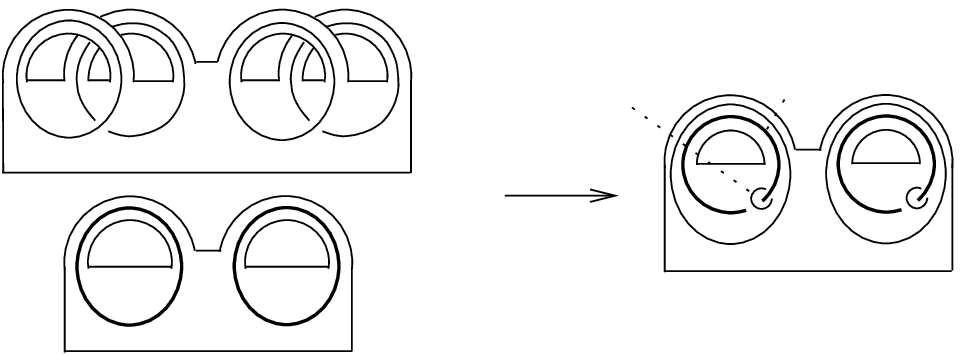
  \caption{The shelling $(\Sigma_{g,1}\times I)\ast H_g$ and the curves $a_i$, $b_i$, $h_i$.
  }\label{figshell}
 \end{figure}

The main goal of this  section is to provide explicit diagrammatic formulas in  $\S$\ref{pairings}  for these three topological operations.
\subsection{A general gluing formula}\label{sec:odot}

We now introduce another more basic operation, which is
a key tool for manipulating our diagrammatic formulas.
\subsubsection{The contraction $\circ$ of labeled Jacobi diagrams}
Let $D\in \mathcal{B}(S)$ and $D'\in \mathcal{B}(S')$ be diagrams, for some finite sets $S$ and $S'$, and let $R\subseteq S\cap 
S'$.
Define the {\it contraction product}
 $D\circ_R D'\in \mathcal{B}\left((S\cup S')- R\right)$, as follows.
If $R=\emptyset$ or if for some $x\in R$ the number of $x$-colored vertices of $D$ and $D'$ is not the same, then set 
$D\circ_R D'=0$, and
otherwise, $D\circ_R D'$ is defined to be the sum of all possible ways of gluing pairwise the univalent vertices of $D$ 
and $D'$ labeled by the same element of $R$.  By linear extension, this defines a contraction map
\[ \circ_R: \mathcal{B}(S)\times \mathcal{B}(S') \to \mathcal{B}\left((S\cup S')- R\right), \]
which we will call the \emph{contraction over $R$}.

Let   $ -_s\in \mathcal{A}\left(\uparrow,\{s\} \right)$ be the Jacobi diagram consisting of a single strut with one 
vertex on $\uparrow$ and one vertex colored by $s$.  Set
$$\lambda(s,u,v): = \chi^{-1}_{\{v\}}\left(\textrm{exp}(-_s)\cdot \textrm{exp}(-_u)\right)\in 
\mathcal{B}(\{s,u,v\}),$$
where the exponential is with respect to the stacking product of Jacobi diagrams.\footnote{
As explained in \cite[Proposition 5.4]{BNGRT1} and \cite[Remark 4.8]{CHM}, $\lambda(s,u,v)$ can also be defined in
terms of the Baker-Cambell-Hausdorff series.}
If $S$, $U$ and $V$ respectively denote the sets $\{s_1,...,s_n\}$, $\{u_1,...,u_n\}$ and $\{v_1,...,v_n\}$, then 
define
\[ \Lambda^n(S,U,V) := \sqcup_{i=1}^{n}  \lambda(s_i,u_i,v_i)\in \mathcal{B}(S\cup U\cup V). \]

\begin{proposition}\label{proplambda} {\rm \cite[Proposition 5.4]{BNGRT1} (see also \cite[Claim 5.6]{CHM})}
For $n\ge 1$, let $D\in \mathcal{A}(X\cup \uparrow^n)$ and $E\in \mathcal{A}(X'\cup \uparrow^n)$, where $X$ and $X'$ 
are two (possibly empty) $1$-manifolds.
Let $D\cdot E\in \mathcal{A}(X\cup X'\cup \uparrow^n)$ be obtained from the stacking product of 
$\mathcal{A}(\uparrow^n)$.  Then
$$\aligned
\chi^{-1}_{\uparrow^n,V} (D\cdot E) &= \Lambda^n(S,U,V)\circ_{S\cup U} \left( \chi^{-1}_{\uparrow^n,S}(D) \sqcup
\chi^{-1}_{\uparrow^n,U}(E) \right) \\
&= \chi^{-1}_{\uparrow^n,S}(D) \circ_{S} \Lambda^n(S,U,V)\circ_{U} \chi^{-1}_{\uparrow^n,U}(E)\\
&\in \mathcal{A}(X\cup
X',V). \\
\endaligned$$
\end{proposition}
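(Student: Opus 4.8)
The plan is to reduce this to the known PBW-type identity \cite[Proposition 5.4]{BNGRT1} by recognizing that the statement as given is literally a reformulation of it, carrying along an extra ambient $1$-manifold $X$ (resp.\ $X'$) on which the diagrams $D$ (resp.\ $E$) also have univalent vertices. First I would reduce to the case $X=X'=\emptyset$: since $\chi^{-1}_{\uparrow^n,V}$, $\chi^{-1}_{\uparrow^n,S}$, $\chi^{-1}_{\uparrow^n,U}$ only act on the vertices attached to the distinguished $\uparrow^n$ and leave the $X$- and $X'$-vertices untouched, and since the stacking product $D\cdot E$ in $\mathcal{A}(X\cup X'\cup\uparrow^n)$ is performed only along the $\uparrow^n$ boundary points, every operation in the claimed formula is ``local'' to the $\uparrow^n$ part; the diagrammatic pieces living on $X$ and $X'$ are simply spectators that get tensored on. So it suffices to prove the identity in $\mathcal{A}(\uparrow^n)$ with no spectator manifold, which is exactly the content of \cite[Proposition 5.4]{BNGRT1}.

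Then I would address the structure of the proof of that underlying identity, at least in outline, for completeness. The key point is the single-strand ($n=1$) case: one shows $\chi^{-1}_{\{v\}}(D\cdot E) = \chi^{-1}_{\{s\}}(D)\circ_s \lambda(s,u,v)\circ_u \chi^{-1}_{\{u\}}(E)$, where $\lambda(s,u,v)=\chi^{-1}_{\{v\}}(\exp(-_s)\cdot\exp(-_u))$ is the ``correction'' diagram that records how concatenation of two $\uparrow$-strands interacts with PBW symmetrization. This is a formal consequence of the Baker--Campbell--Hausdorff expansion of the product of two symmetrized exponentials (cf.\ the footnote in the excerpt), together with the fact that $\chi^{-1}$ turns the stacking product on $\mathcal{A}(\uparrow)$ into the ``group-like'' product on $\mathcal{B}(\{\ast\})$; the appearance of $\lambda$ is precisely the defect measuring non-cocommutativity. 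The second displayed equality in the Proposition, writing the contraction as $\chi^{-1}_{\uparrow^n,S}(D)\circ_S \Lambda^n(S,U,V)\circ_U \chi^{-1}_{\uparrow^n,U}(E)$ rather than $\Lambda^n(S,U,V)\circ_{S\cup U}(\cdots\sqcup\cdots)$, is then just associativity/bilinearity of the contraction product $\circ_R$: contracting a disjoint union against $\Lambda^n$ over $S\cup U$ is the same as contracting $D$ against $\Lambda^n$ over $S$ and then against $E$ over $U$, since the $S$- and $U$-labeled legs of $\Lambda^n$ are disjoint.

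The general $n$ case follows from the $n=1$ case by applying it one strand at a time, using that $\Lambda^n(S,U,V)=\sqcup_i\lambda(s_i,u_i,v_i)$ is a disjoint union over the strands and that the relevant PBW maps factor as a composition over the strands (as in the definition $\Delta^{(n)}$ and $\chi_{X',S}$ given in \S\ref{opjac}, \S\ref{defjac}); the strands do not interact under $\chi^{-1}$ beyond what $\lambda$ already records on each. I expect the only genuinely delicate point to be bookkeeping: making sure the orientation conventions on struts ($-_s$ versus its reverse), the left-to-right order of the two factors in the stacking product (``$E$ stacked on top of $D$'' versus the opposite), and the linear orderings on $S$, $U$, $V$ are all matched so that no sign or transposition is dropped. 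Everything else is a formal manipulation of the Hopf-algebra structure on $\mathcal{A}(\uparrow^n)\cong\mathcal{B}(n)$ and should be cited to \cite{BNGRT1} (see also \cite[Claim 5.6]{CHM}) rather than re-derived.
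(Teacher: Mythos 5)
Your proposal is correct, and it matches the paper's treatment: the paper gives no independent proof of Proposition \ref{proplambda}, quoting it directly from \cite[Proposition 5.4]{BNGRT1} (see also \cite[Claim 5.6]{CHM}), exactly as you ultimately do. Your added remarks --- that the extra $1$-manifolds $X$, $X'$ are spectators because all the maps $\chi^{-1}_{\uparrow^n,\cdot}$, the stacking, and the contractions act only on the $\uparrow^n$ part, and that the second displayed equality is just bilinearity of $\circ_R$ over the disjoint $S$- and $U$-legs of $\Lambda^n$ --- are accurate and harmless supplements to the citation.
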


\subsubsection{A gluing formula for the LMO invariant of tangles} \label{glueLMO}

For $m,n\ge 0$, denote by $T_{m,n}$ the q-tangle in $C$ represented in Figure \ref{tcap}.  In the natural way, we 
consider the tangles $T_{m,0}$ and $T_{0,n}$ as subtangles of $T_{m,n}$.   \\
 \begin{figure}[h!]
  \includegraphics{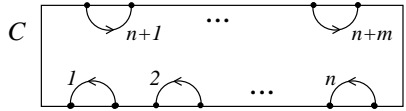}
  \caption{The q-tangle $T_{m,n}$, where the bracketing on both sets of boundary points is of the form
  $(\bullet \bullet) \Big( (\bullet \bullet) \big( (\bullet \bullet)\cdots \left((\bullet \bullet) (\bullet
  \bullet)\right)\cdots\big) \Big) $.  }\label{tcap}
 \end{figure}

Consider the q-tangle $\gamma'=T_{m,n}\cup L'$ in $C$, where $L'$ is some framed link disjoint from $T_{m,n}$.
For any element $D\in \mathcal{A}(\uparrow^n)$, define
\begin{equation}\label{odotk}
D\stackrel{k}{\odot} \gamma' = \chi^{-1}_S D\circ_S \dfrac{\iota_k j^{V}_k \left( \Lambda^{n}(S,U,V)\circ_U
\chi^{-1}_{T_{0,n},U}\hat{Z}(C,\gamma')\right)}{\iota_k(\check{Z}(U_{+}))^{\sigma_+(\gamma')}
\iota_k(\check{Z}(U_{-}))^{\sigma_-(\gamma')}}\in \mathcal{A}(\uparrow^m),
\end{equation}
and set
\begin{equation}\label{odot}
D\odot \gamma' := 1+ \sum_{k\ge 1} (D\stackrel{k}{\odot} \gamma')_k\in \mathcal{A}(\uparrow^m),
\end{equation}
\noindent where
\begin{itemize}
\item $\hat{Z}(C,\gamma')$ is obtained from $\check{Z}(C,\gamma')$ by taking connected sum of $\nu$ with each
    component of $T_{0,n}\subset \gamma'$.
\item $\sigma_\pm(\gamma'):=\sigma_\pm^{L'\cup T_{0,n}}$ denotes the number of positive and negative eigenvalues of
    the linking matrix of the tangle $(T_{0,n}\cup L')\subset \gamma'$.
\item The map $j^{V}_k: \mathcal{B}(S\cup V)\rightarrow \mathcal{B}(S)$ is defined by applying $j_k$ to the
    $V$-colored vertices, as in the definition of $\S$\ref{sec:iota}, and leaving the $S$-colored vertices
    unchanged.
\end{itemize}

It will be useful to define an analogous product for $n$-colored Jacobi diagrams, still denoted by $\odot$. For $E\in 
\mathcal{B}(n)$, we set
\begin{equation}\label{odotcol}
E\odot \gamma' := \chi^{-1}\left( \chi (E)\odot \gamma'\right)\in \mathcal{B}(m).
\end{equation}

We now use this product to give a gluing formula for the LMO invariant.
Let $\gamma=T_{n,0}\cup L$ be a q-tangle in $C$, where $L$ is a framed link disjoint from $T_{n,0}$ so that $b=C_L$ is 
an integral homology ball (in particular, $L$ can be chosen to be a clasper); see Figure \ref{compLMO}.  Set
$$(b,t)=(C,T_{n,0})_L\in \mathcal{T}(n). $$
\begin{figure}[h!]
 \includegraphics{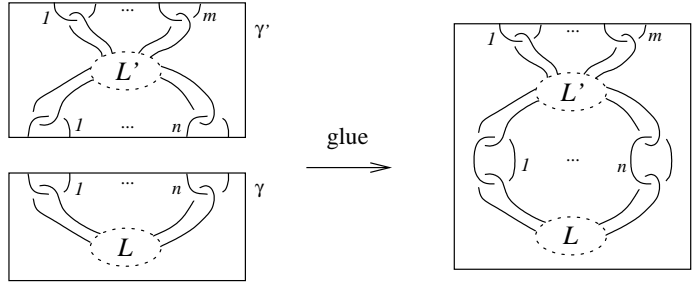}
 \caption{The tangles $\gamma$ and $\gamma'$, and their composition. }\label{compLMO}
\end{figure}

Let $O_n$ denote the $n$ component link arising as the composition of $T_{n,0}$ and $T_{0,n}$, so 
that $\gamma\cdot \gamma'=L'\cup O_n\cup L\cup T_{m,0}$.

\begin{lemma}\label{lemglueLMO}
Let $\gamma$ and $\gamma'$ be two q-tangles as decribed above.
Then the LMO invariant of
\[ (B,T):=\left( C,T_{m,0}\right)_{L\cup O_n\cup L'} \]
is given by
$$ Z^{LMO}(B,T)=Z^{LMO}(b,t)\odot \gamma'\in \mathcal{A}(\uparrow^m). $$
\end{lemma}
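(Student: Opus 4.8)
The plan is to reduce the claimed identity to the composition properties of the LMO/Kontsevich machinery together with Proposition~\ref{proplambda}, which is precisely the diagrammatic shadow of stacking tangles. First I would unwind the definition of $\odot$ in~(\ref{odotk})--(\ref{odot}). Writing $D = Z^{LMO}(b,t)\in\mathcal{A}(\uparrow^n)$, the right-hand side $D\odot\gamma'$ is, degree by degree in $k$, the contraction $\chi^{-1}_S D \circ_S$ of $D$ against the normalized quantity $\iota_k j^V_k\bigl(\Lambda^n(S,U,V)\circ_U \chi^{-1}_{T_{0,n},U}\hat Z(C,\gamma')\bigr)$, divided by the usual unknot correction factors. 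So the real content is: gluing the tangle $\gamma=T_{n,0}\cup L$ on top of $\gamma'=T_{m,n}\cup L'$ along the $n$ strands of $O_n$, and then running the $\iota$-construction on the resulting surgered tangle $(B,T)$, agrees with first running $\iota$ separately (to get $Z^{LMO}(b,t)$ on the $b$ side), then gluing diagrammatically via $\Lambda^n$ and $\circ$, then applying $\iota$ once more on the $\gamma'$ side.

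The key steps, in order. (1) Express $Z^{LMO}(B,T)$ directly from the definition of the LMO invariant of tangles in homology balls (Remark~\ref{remnablaredlmo}): it is $\iota_k$ applied to $\check Z$ of the surgery presentation $L\cup O_n\cup L'$ inside $C$ with core $T_{m,0}$, normalized by the $\sigma_\pm$ unknot factors; note $\sigma_\pm$ of $L\cup O_n\cup L'$ splits as $\sigma_\pm(L)+\sigma_\pm(\gamma')$ since $O_n$ together with the zero-framed strands of $T_{0,n}$ contributes nothing to the linking matrix beyond what is already counted in $\gamma'$ (the $O_n$ strands are $0$-framed meridian-type and the $L$ part is isolated in its own ball). (2) Use multiplicativity of the Kontsevich integral under vertical composition, $\check Z(\gamma\cdot\gamma') = \check Z(\gamma)\cdot\check Z(\gamma')$ along the $n$ glued strands, the composition being the stacking product of $\mathcal{A}(\uparrow^n)$ on those strands; here one must be careful to feed in the connected-sum-with-$\nu$ bookkeeping ($\hat Z$ vs $\check Z$) consistently on the $O_n$ components, exactly as the hypothesis ``$\hat Z(C,\gamma')$ is obtained from $\check Z$ by connect-summing $\nu$ on each component of $T_{0,n}$'' is set up to allow. (3) Apply Proposition~\ref{proplambda} with $X$ the core/link on the $\gamma$ side and $X'$ that on the $\gamma'$ side: the PBW-transported stacking product becomes $\chi^{-1}_{\uparrow^n,S}(\check Z(\gamma))\circ_S \Lambda^n(S,U,V)\circ_U \chi^{-1}_{\uparrow^n,U}(\check Z(\gamma'))$. (4) Now observe the $\iota_k$ on the left factors through: the $O_n$ circles (closed components of $\gamma\cdot\gamma'$) get killed by $\iota_k$, and since the $\iota$-construction is ``local'' — it is $j_k$ followed by $O_n$-loop-removal and truncation, and the $S,U$-colored vertices in the $\Lambda^n$ factor are exactly the images of the glued strands — the $j_k^V$/$\iota_k$ applied to the $V$-colored (i.e.\ $O_n$-core) part commutes past the $\circ_S$ contraction with $\chi^{-1}_S D$. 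This is where the definition $\iota_k(x) = (j_k(\chi^{-1}(y)))_{\le k}$ and the factorization of $\langle\,\cdot\,\rangle$/$O_k$ over connected components is invoked. (5) Recognize that $\iota_k$ applied to $\check Z$ of the $L$-part alone, with core $T_{n,0}$, is by definition $Z^{LMO}(b,t)_{\le k}$; collect the normalization factors and match them with the denominators in~(\ref{odotk}) using the $\sigma_\pm$ splitting from step~(1). Finally sum over $k$ via~(\ref{odot}) to recover the power-series identity.

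The main obstacle I expect is step~(4): justifying rigorously that the LMO operation $\iota_k$ ``commutes with gluing,'' i.e.\ that one may apply $\iota_k$ to the $O_n$-circles before performing the $\Lambda^n$-contraction along the still-open $S,U$ strands, rather than after closing everything up. This requires care because $\iota_k$ is defined only on spaces $\mathcal{A}(X\sqcup\bigcirc^m)$ with $X$ having no circle components, and involves the non-formal steps of loop removal with $(-2k)$ factors and truncation to $i$-degree $\le k$ — so one must check that the strand-gluing $\circ_S$ does not create new isolated loops that would interfere, and that the truncation is compatible with the grading conventions on both sides (this is exactly the role of taking the degree-$k$ part $(\cdot)_k$ in~(\ref{odot}) and is the analogue of the argument in~\cite[\S3.1]{LMO} and \cite{CHM}). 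I would handle this by first proving the identity at the level of each fixed $k$ with all cores kept as intervals (pure $\chi^{-1}$/$j_k$ combinatorics, where everything is multilinear and the commutation is formal), and only at the very end invoking $O_k$ loop-removal and truncation, which act after all contractions are done. A secondary, more bookkeeping-level obstacle is tracking the $\nu$-insertions ($\hat Z$ vs $\check Z$) and the $\sqrt\nu = Z(C_\pm)$ data living on the $T_{\bullet,\bullet}$ cap strands through the composition; these are handled by the stated hypothesis on $\hat Z(C,\gamma')$ together with the fact that $\nu$ is antipode-invariant and strut-free after $\chi$, as recalled just after~(\ref{Z1}).
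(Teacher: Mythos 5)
Your overall route is the paper's: unwind the definition of $\odot$, use the stacking decomposition of the composite tangle together with Proposition~\ref{proplambda} to turn the gluing along the $n$ strands into a $\Lambda^n$-contraction, and then match the unknot normalizations. The structural difference is in how the $L$-surgery is handled. The paper does not attempt your step~(4) by hand: it first splits the maps as $\iota^{L'}_k\iota^{O_n}_k\iota^L_k$ and invokes the functoriality result \cite[Theorem 6.6]{LMO} to absorb the $\iota^L_k$ and the $\sigma^L_\pm$-normalization into $Z^{LMO}(b,\delta)$, where $\delta$ is the tangle obtained from $\gamma\cdot\gamma'$ by surgery on $L$. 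Only then is Proposition~\ref{proplambda} applied to the decomposition $\delta=t\cdot\gamma'$, and at that point the remaining commutation is essentially formal: $j^V_k$ touches only the $V$-colored vertices coming from $\Lambda^n$, $\iota^{L'}_k$ touches only the $L'$-circles (the only circle components left), and $\circ_S$ glues only $S$-colored vertices, so these operations act on disjoint data. In other words, the main obstacle you single out is exactly what the citation of \cite[Theorem 6.6]{LMO} is there to dispose of; re-proving that commutation from scratch, with the loop-removal and truncation issues you mention, would amount to re-deriving a known LMO functoriality statement.

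The one point in your write-up that would fail as stated is the justification of the signature splitting in step~(1). It is true that $lk(L,L')=0$ since $L$ and $L'$ sit in different cubes, but $L$ is \emph{not} unlinked from the $O_n$ components (the caps $T_{n,0}$ in $\gamma$ do link $L$ in the intended applications, cf.\ Figure~\ref{proofcapping}), so the linking matrix of $L\cup O_n\cup L'$ has a nonzero off-diagonal block and its signature does not naively split. The correct accounting uses that the linking matrix of $L$ is unimodular (since $C_L=b$ is a homology ball), so that $\sigma_\pm$ of the big matrix equals $\sigma^L_\pm$ plus $\sigma_\pm$ of the Schur complement, i.e.\ of the linking matrix of $L'\cup O_n$ computed \emph{in} $b$ after the $L$-surgery; the hypothesis on $L$ is then what identifies this with the linking matrix of $T_{0,n}\cup L'\subset\gamma'$, giving $\sigma^{L'\cup O_n}_\pm=\sigma_\pm(\gamma')$. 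In the paper this is exactly how the bookkeeping is organized: $\sigma^L_\pm$ is absorbed by Theorem 6.6 of \cite{LMO}, and the identification of the residual $\sigma^{L'\cup O_n}_\pm$ with $\sigma_\pm(\gamma')$ is the final observation of the proof. With step~(1) repaired in this way and step~(4) replaced by the citation, your argument becomes the paper's.
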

\begin{proof}
Let $\delta$ denote the tangle in $b$ obtained from $\gamma\cdot \gamma'$ by surgery along the link $L$.
By definition, the degree $k$ part of $Z^{LMO}(B,T)$ is given by
\[ Z^{LMO}_k(B,T) =
\left( \dfrac{\iota^{L'}_k\iota^{O_n}_k\iota_k^L\left(\check{Z}(C,\gamma\cdot \gamma') \right)}
{(\iota_k(\check{Z}(U_{+}))^{\sigma^{L'\cup O_n\cup L}_+} (\iota_k(\check{Z}(U_{-}))^{\sigma^{L'\cup O_n\cup L}_-}}
\right)_k, \]
where $\iota^{O_n}_k$, respectively, $\iota^{L'}_k$ and $\iota^L_k$, denote the map $\iota_k$ applied only to the 
copies of $S^1$ corresponding to $O_n\subset \gamma\cdot \gamma'$, respectively, to $L', L\subset \gamma\cdot \gamma'$, 
as in the definition of $\S$\ref{sec:iota}.

By following (the proof of) \cite[Theorem 6.6]{LMO}, we have
\[ Z^{LMO}_k(B,T) =
\left( \dfrac{\iota^{L'}_k\iota^{O_n}_k\left(Z^{LMO}(b,\delta) \right)}
{(\iota_k(\check{Z}(U_{+}))^{\sigma^{L'\cup O_n}_+} (\iota_k(\check{Z}(U_{-}))^{\sigma^{L'\cup O_n}_-}} \right)_k, \]
and
$$\iota^{L'}_k\iota^{O_n}_k\left(Z^{LMO}(b,\delta) \right)=\left( \iota^{L'}_k 
j^{V}_k\chi^{-1}_{O_n,V}\left(Z^{LMO}(b,\delta \right)\right)_{\le k}$$
from the definition of $\iota_k$.
By Proposition \ref{proplambda},
\[ \chi^{-1}_{O_n,V} \left(Z^{LMO}(b,\delta) \right) = \chi^{-1}_{S}Z^{LMO}(b,t)\circ_S \Lambda^{n}(S,U,V)\circ_U 
\chi^{-1}_{T_{0,n},U}\hat{Z}(C,\gamma'). \]
Note that the only copies of $S^1$ in the core of the above quantity are those corresponding to the link $L'$, so that 
applying $\iota^{L'}_k$ just amounts to applying the map $\iota_k$ of $\S$\ref{sec:iota}.
Finally, note that by our assumption on the link $L$, the linking matrix of $L'\cup O_n$ is just the linking matrix of 
the tangle $(T_{0,n}\cup L')\subset \gamma'$, so $\sigma^{L'\cup O_n}_\pm=\sigma_\pm(\gamma')$ as required.
\end{proof}
\begin{rem}\label{remgluenabla}
A similar formula holds in general for the invariant $\nabla_G$ of q-tangles in cobordisms over $\Sigma$.  The 
only requirement is that such a tangle decomposes as the stacking of some q-tangle with an element of $\mathcal{T}(n)$, 
for some integer $n$ (such as $\gamma$ in Lemma \ref{lemglueLMO}).  In this case, there is a formula similar to 
(\ref{odotk}), but the Kontsevich integral is replaced with the AMR invariant $V_G$.

To illustrate, we give a diagrammatic version of the map $J_G$ of  $\S$\ref{univtangles}, which allows us to express 
$\nabla_G(M)$ in terms of $\nabla^r_G(M)$ for a homology cylinder $M$.
Recall that $T_G$ denotes the tangle $I_G\cup (K_G\cap (1_\Sigma- C))$ in $1_\Sigma- C$, cf.\ (\ref{deftgtm}).
The subtangle $m_i\cap (1_\Sigma- C)\subset T_G$ is just a copy of the tangle $T_{0,h}$, and we have the formula
\begin{equation}\label{cg}
  \nabla_G(M) =  \nabla^r_G(M)\odot T_G \in \mathcal{A}_h,
\end{equation}
\noindent for any homology cylinder $M$ over $\Sigma$.
Though it is defined more generally for cobordisms, this expresses our universal invariant $\nabla_G$ for homology cylinders
in terms of LMO since  $\nabla_G^r(M)$ can be computed in terms of the the LMO invariant of a q-tangle in a homology ball as 
in Equation \ref{nablaredn}.\end{rem}
\subsection{Diagrammatic formulas for the topological gluings}\label{pairings}
In this section, we finally give the explicit formulas for the pairing, stacking and shelling products.%
\subsubsection{Model for preferred structures} \label{models}
We begin by choosing preferred marked bordered fatgraphs in each of the  surfaces $\Sigma_{g,1}$ and $\Sigma_{0,g+1}$.
The specified fatgraphs each have the property that the greedy algorithm produces a line segment as maximal tree; such  ``linear chord diagrams'' are studied in \cite{bene}.
The first, denoted $\overline C_g$, consists of $g$ edges attached along the line interval  $T_{\overline C_g}$, 
creating $g$ isolated humps as shown in Figure \ref{fig:stcd}.
The second fatgraph, which we call a genus $g$ \emph{symplectic fatgraph}\footnote{Note that this notation differs from 
that used in \cite{BKP}.}  and denote by $C_{g}$, consists of $2g$ edges which appear along the interval $T_{C_g}$ in
$g$ isolated overlapping pairs as illustrated in Figure \ref{fig:stcd}; see Figures \ref{proofcapping} and \ref{proofstack}
for the skinny surfaces respectively associated to $\overline{C}_2$ and $C_1$.
\begin{figure}[h!]
\input{stcd.pstex_t}
\caption{Preferred  marked  fatgraphs $\overline C_g\hra \Sigma_{0,g+1}$ and $C_g\hra \Sigma_{g,1}$.}
\label{fig:stcd}
\end{figure}
We choose the standard markings of $C_g$ in $\Sigma_{g,1}$ and $\overline C_g$ in $\Sigma_{0,g+1}$ as  shown in Figure 
\ref{fig:stcd}, where we have the identification of $\Sigma_g=\Sigma_{g,1}\cup D^2$ with  the boundary of 
$H_g=\Sigma_{0,g+1}\times I$.
\subsubsection{Diagrammatic pairing, stacking and shelling}
Let $g\ge 1$ be an integer and
define the three $q$-tangles in $C$
\[ T_g=T_{0,2g}\cup L_T, \quad S_g=T_{2g,4g}\cup L_S\quad \textrm{and}\quad R_g=T_{g,3g}\cup L_R, \]
where $L_T$, $L_S$ and $L_R$ are framed links as shown Figure \ref{tangles}.
\begin{figure}[h!]
\includegraphics{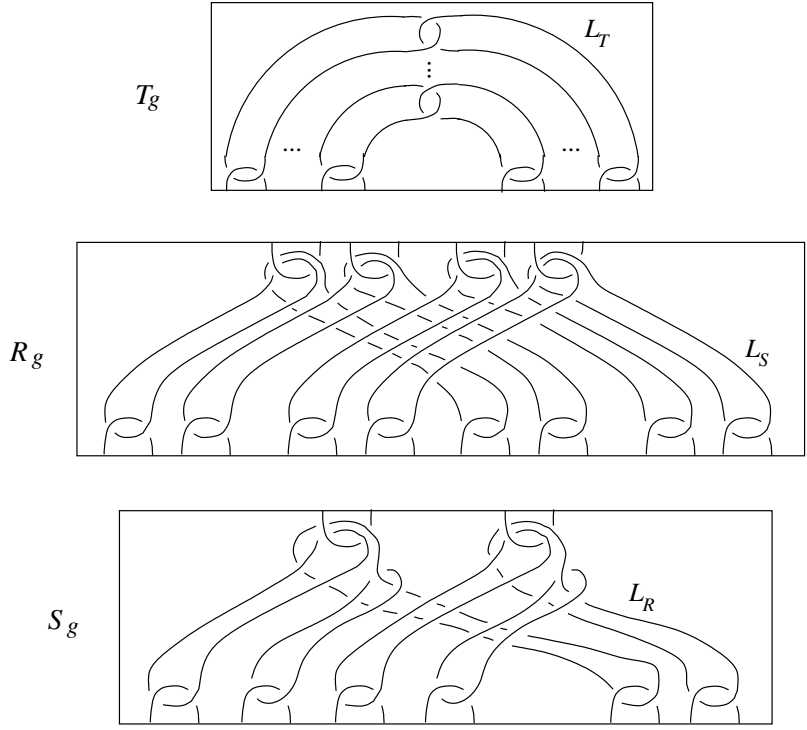}
\caption{The $q$-tangles $T_g$, $S_g$ and $R_g$.  }\label{tangles}
\end{figure}

Given Jacobi diagrams $D,D'\in \mathcal{A}_g$ and $E,E'\in \mathcal{A}_{2g}$, define
$$\aligned
 \langle \; , \; \rangle: \mathcal{A}_g \times \mathcal{A}_g &\rightarrow \mathcal{A}(\emptyset),\\
 \bullet: \mc{A}_{2g}\times\mc{A}_{2g}&\rightarrow \mc{A}_{2g},\\
 \star: \mc{A}_{2g}\times\mc{A}_{g}&\rightarrow \mc{A}_{g}\\
\endaligned
$$
by
$$\aligned 
\langle D,D'\rangle &:= (D\otimes D')\odot T_g,\\
 E\bullet E' &:= (E\otimes E')\odot S_g~~ \textrm,\\
 E\star D &:= (E\otimes D)\odot R_g.\\
\endaligned$$

\begin{theorem}\label{thmpairings}
Let $H$ and $H'$ be two genus $g$ homology handlebodies, and let $M,M'$ be two homology cylinders over $\Sigma_{g,1}$.
Then
\begin{eqnarray}
 & Z^{LMO}(H\cup_\iota H') = \langle \nabla^r_{\overline{C}_g}(H),\nabla^r_{\overline{C}_g}(H')\rangle, & \label{mTm1}
 \\
 & \nabla^r_{C_g}(M\cdot M') = \nabla^r_{C_g}(M)\bullet \nabla^r_{C_g}(M'), & \label{mTm2} \\
 & \nabla^r_{\overline{C}_g}(M\ast H) = \nabla^r_{C_g}(M)\star \nabla^r_{\overline{C}_g}(H). & \label{mTm3}
\end{eqnarray}
\end{theorem}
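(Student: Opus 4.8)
The plan is to establish all three identities uniformly, by realizing each topological operation as a composition of tangles and then combining the reformulation of the rigid invariant in Remark~\ref{remnablaredlmo} with the gluing formula of Lemma~\ref{lemglueLMO}. The common starting point is that, by Remark~\ref{remnablaredlmo}, for a homology cylinder $M$ over a bordered surface $\Sigma$ with $h=2g+n-1$ and a marked fatgraph $G$, one has $\widehat\nabla^r_G(M)=Z^{LMO}(B_M,\gamma_M)$ for an explicit tangle $(B_M,\gamma_M)\in\mc{T}(h)$ built from $(C,K_G\cap C)$ by surgery along a reduced representative of $M$. Specializing to the preferred fatgraphs of \S\ref{models}, this presents $\nabla^r_{C_g}(M)$ as the (projection of the) LMO invariant of a tangle in $\mc{T}(2g)$ when $M$ is a homology cylinder over $\Sigma_{g,1}$, and $\nabla^r_{\overline C_g}(H)$ as the LMO invariant of a tangle in $\mc{T}(g)$ when $H$ is a genus $g$ homology handlebody, the latter because $\overline C_g$ has exactly $g$ generators and, by \S\ref{handlebodies}, $V(\Sigma_g,\Lambda^{st})=\mc{HC}(\Sigma_{0,g+1})$.

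The heart of the argument is a topological claim to be verified case by case: in the rigid picture attached to the relevant preferred fatgraph(s), each of the three operations is precisely ``glue in the tangle $T_g$, $S_g$, or $R_g$'' in the sense of Lemma~\ref{lemglueLMO}. For stacking I would show that $(B_{M\cdot M'},\gamma_{M\cdot M'})$ for the fatgraph $C_g$ is obtained from the boundary connected sum of the tangles representing $M$ and $M'$, an element of $\mc{T}(4g)$, by composing with $S_g=T_{2g,4g}\cup L_S$; the link $L_S$ of Figure~\ref{tangles} is exactly what is needed to identify the top of one rigid system of linking pairs with the bottom of the other and to reparametrize the interval, and all intervening moves are isotopies and Kirby~II slides along meridians of $K_{C_g}$, as in the proof of Lemma~\ref{reducedform}. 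For shelling I would similarly identify $(B_{M\ast H},\gamma_{M\ast H})$ for $\overline C_g$ with the composition of the tangle representing $M\sqcup H$, an element of $\mc{T}(3g)$, with $R_g=T_{g,3g}\cup L_R$; here $L_R$ must encode the gluing $\partial H=i^-(\Sigma_g)$, the disc-capping in the definition of $\ast$, and the change of marking from $C_g$ on $\Sigma_{g,1}$ to $\overline C_g$ on the resulting handlebody, using $\Sigma_g=\Sigma_{g,1}\cup D^2=\partial H_g$ and the curves of Figures~\ref{fig:stcd} and~\ref{figshell}. For pairing I would show that $H\cup_\iota H'$ is surgery on the composition of the tangle representing $H\sqcup H'$, an element of $\mc{T}(2g)$, with $T_g=T_{0,2g}\cup L_T$, where $L_T$ realizes the standard orientation-reversing map $\iota$ taking longitudes to meridians. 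In every case the verification amounts to a direct but intricate inspection of the skinny surfaces of $C_g$ and $\overline C_g$ (cf.\ Figures~\ref{proofcapping} and~\ref{proofstack}) and of the rigid positions of $K_{C_g}$ and $K_{\overline C_g}$ from \S\ref{subsec:rednabla}; I expect this bookkeeping --- reconciling the combinatorics of the preferred fatgraphs, the systems of linking pairs, and the three gluing definitions while checking that no spurious linking or framing is created --- to be the main obstacle.

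Granting the topological claim, the three identities follow formally. Since the LMO invariant is multiplicative under boundary connected sum of tangles in homology balls, the LMO invariant of the disjoint union tangle is the tensor product of the corresponding rigid invariants (after the projection $\mc{A}(\uparrow^{k})\cong\mc{B}(k)\to\mc{A}_k$). Applying Lemma~\ref{lemglueLMO} to the composition with $S_g$ therefore gives
\[ \nabla^r_{C_g}(M\cdot M')=Z^{LMO}(B_{M\cdot M'},\gamma_{M\cdot M'})=\bigl(\nabla^r_{C_g}(M)\otimes\nabla^r_{C_g}(M')\bigr)\odot S_g=\nabla^r_{C_g}(M)\bullet\nabla^r_{C_g}(M'), \]
which is (\ref{mTm2}); the same reasoning with $R_g$ in place of $S_g$ yields (\ref{mTm3}), and with $T_g$ (so $m=0$ and the output lands in $\mc{A}(\emptyset)$) yields (\ref{mTm1}), using that the pairing of two homology handlebodies in $\overline{\mc H}_{\Sigma_{0,g+1}}$ is an integral homology $3$-sphere so that its invariant is literally $Z^{LMO}$. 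Well-definedness of both sides on $\overline{\mc H}$ is automatic from Theorem~\ref{thmrednabla}.
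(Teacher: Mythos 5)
Your proposal is correct and follows essentially the same route as the paper: the paper also passes to reduced representatives via Lemma~\ref{reducedform} in the rigid systems of linking pairs for $C_g$ and $\overline{C}_g$, observes by Kirby calculus (Figures~\ref{proofcapping}--\ref{proofshell}) that the resulting surgery presentations decompose as $(T_H\otimes T_{H'})\cdot T_g$, $(T_M\otimes \check T_{M'})\cdot S_g$, and $(T_M\otimes T_H)\cdot R_g$, and then concludes by Lemma~\ref{lemglueLMO}. The case-by-case topological bookkeeping you defer is exactly the content of those figures, carried out in the paper at a comparable level of detail.
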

\begin{proof}
Let $K,K'$ be framed links in $1_{\Sigma_{0,g+1}}$ such that $H=(1_{\Sigma_{0,g+1}})_K$ and 
$H'=(1_{\Sigma_{0,g+1}})_{K'}$, and let $L,L'$ be framed links in $1_{\Sigma_{g,1}}$ such that $M=(1_{\Sigma_{g,1}})_L$ 
and $M'=(1_{\Sigma_{g,1}})_{L'}$.

Denote by $\overline{K_g}$ and $K_g$, the system of linking pairs in rigid position in $1_{\Sigma_{0,g+1}}$ and 
$1_{\Sigma_{g,1}}$, respectively,  induced by the preferred marked bordered  fatgraphs $\overline{C}_g$ and $C_g$ 
defined in $\S$\ref{models}.
Let $K_0$, $K_0'$, $L_0$, and $L_0'$ be the reduced representatives of $K$, $K'$, $L$ and $L'$ respectively with 
respect to the linking pairs $\overline{K_g}$ and $K_g$ as provided by Lemma \ref{reducedform}.
See the left-hand side of Figures \ref{proofcapping} and \ref{proofshell}.
Note that surgery along these links in the preferred box always gives a homology ball since we are considering homology 
cylinders.

As to Equation (\ref{mTm1}),
it follows from straightforward Kirby calculus that the integral homology sphere $H\cup_\iota H'$ is obtained from 
$S^3$ by surgery along the framed link depicted on the right-hand side of Figure \ref{proofcapping}.
\begin{figure}[!h]
\includegraphics{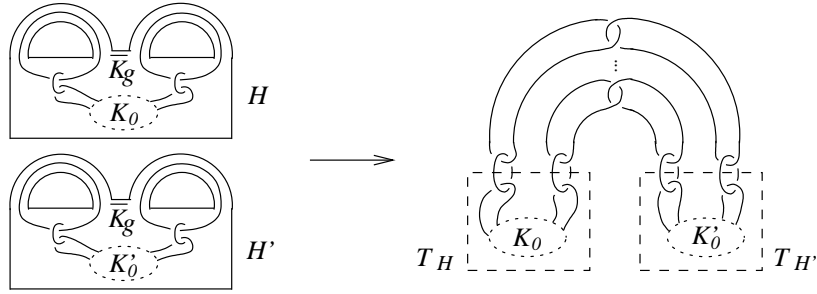}
\caption{Links for the pairing in the case $g=2$.} \label{proofcapping}
\end{figure}
We see that this link can be decomposed as $(T_H\otimes T_{H'})\cdot T_g$, where $T_H$ and $T_{H'}$ are the q-tangles 
in $C$ defined in (\ref{deftgtm}).
The tangles $\gamma=T_H\otimes T_{H'}$ and $\gamma' = T_g$ indeed satisfy the hypotheses of Lemma \ref{lemglueLMO}, 
from which the result follows.

As to Equation (\ref{mTm2}),
the stacking product $M\cdot M'$ is obtained from $1_{\Sigma_{g,1}}$ by surgery along $L\cup L'$, where $L$ and $L'$ 
respectively occur in the lower and upper half of $1_{\Sigma_{g,1}}$.
By Lemma \ref{reducedform}, we can use the system of linking pairs $K_g$ in rigid position to obtain a reduced 
representative, as
shown in the left-hand side of Figure \ref{proofstack}.

\begin{figure}[!h]
\begin{center}
\includegraphics{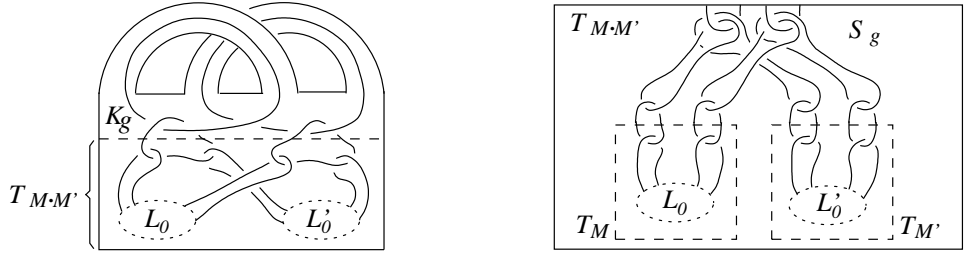}
\caption{Link and tangle for the stacking product in the case $g=1$.} \label{proofstack}
\end{center}
\end{figure}
\noindent Following (\ref{deftgtm}), denote by $T_{M\cdot M'}$ the q-tangle in $C$ obtained by cutting 
$1_{\Sigma_{g,1}}$ along the cutting face of the preferred box.  This tangle, shown on the left-hand side of Figure 
\ref{proofstack}, is Kirby equivalent to the tangle shown on the right-hand side of the figure, which can be decomposed 
as $(T_M\otimes \check T_{M'})\cdot S_g$.
The result then follows from Lemma \ref{lemglueLMO} with $\gamma=T_M \otimes \check T_{M'}$ and $\gamma' = S_g$.

Finally for Equation (\ref{mTm3}),
consider the link in $1_{\Sigma_{0,g+1}}$ obtained from $L$ and $K$ under the shelling product $1_{\Sigma_{g,1}}\star 
1_{\Sigma_{0,g+1}}$.
As in the previous case, we can use Lemma \ref{reducedform} and the system of linking pairs $\overline K_g$ in 
$1_{\Sigma_{0,g+1}}$ to  to obtain a reduced representative.
One can check using Figure \ref{figshell} and Kirby calculus that the tangle $T_{M\ast H}$ obtained by cutting 
$1_{\Sigma_{0,g+1}}$ along the cutting face of the preferred box is the tangle represented on the right-hand side of 
Figure \ref{proofshell}.
\begin{figure}[!h]
\begin{center}
\includegraphics{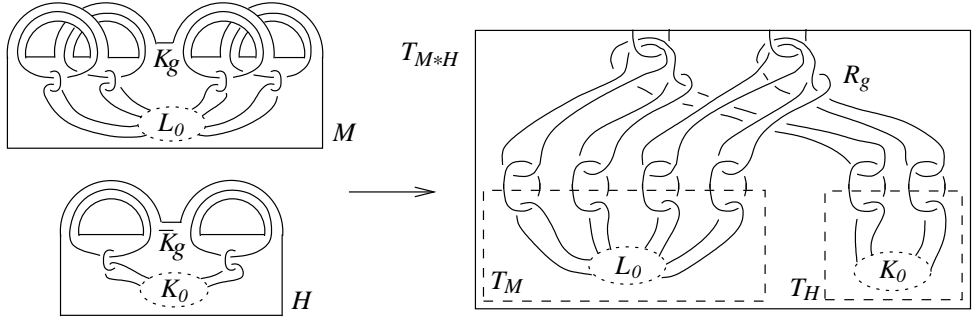}
\caption{Link and tangle for the shelling product in the case $g=1$. } \label{proofshell}
\end{center}
\end{figure}
Since the latter decomposes as $(T_M\otimes T_H)\cdot R_g$, and $\gamma=T_M\otimes T_H$ and $\gamma' = R_g$ satisfy the 
hypotheses of Lemma \ref{lemglueLMO}, the result follows.
\end{proof}
\begin{rem}
We can use (\ref{mTm2}) to define a multiplicative version of the rigid $\nabla_G^r$ invariant.  This is done by 
renormalizing the invariant by a factor which uses the tangle $R_g$ and a relative version of the contraction product 
$\odot$.  A similar renormalization appears in $\S$4.4 of \cite{CHM}.
\end{rem}

\section{Ptolemy Groupoid Representations}\label{sec:ptolemy}
In this section, we exploit the dependence of our invariant $\nabla_G$ on the fatgraph $G$
to construct representations of mapping class groups and their subgroups.

\subsection{Classical actions of subgroups of the mapping class group}\label{actions}
The {\it mapping class group} $MC(\Sigma)$ of a compact orientable surface $\Sigma$, possibly with boundary $\partial 
\Sigma$ non-empty, is the group of isotopy classes relative to $\partial \Sigma$  of orientation-preserving self-diffeomorphisms
of $\Sigma$ which fix $\partial \Sigma$ pointwise.
$MC(\Sigma)$ acts naturally on the integral homology groups of $\Sigma$, and we define the {\it Torelli group} 
$\mc{I}(\Sigma)$ of $\Sigma$ to be the subgroup of $MC(\Sigma)$ acting trivially.
Given a Lagrangian subspace $\Lambda \subset H_1(\Sigma_g;\bQ)$ for  a closed surface $\Sigma_g$, we define the {\it 
Lagrangian preserving mapping class group}
$MC(\Lambda)=\{ \varphi\in MC(\Sigma_g):\varphi_*(\Lambda)=\Lambda\}$.
In particular, it is not difficult to see that the Torelli group is the intersection of all the Lagrangian preserving 
mapping class groups.

Consider the standard Heegaard decomposition of $S^3=H_g\cup_\iota H_g$, where $\iota$ is the orientation-reversing 
involution.   Any mapping class $f\in \mc{I}(\Sigma_{g,1})$ gives rise to a corresponding mapping class $f\in 
\mathcal{I}(\Sigma_{g})$ by capping off and extending by the identity.  We may construct the homology sphere
$S^3_f=H_g\cup_{\iota\circ f} H_g$ by re-gluing the handlebodies using $\iota\circ f$.  More generally
for any Heegaard decomposition of a homology $3$-sphere $M=H\cup_\iota H'$ into two genus $g$ homology handlebodies, we 
obtain a similar map
\begin{equation}\label{eq:Mf}
f\mapsto M_f=H\cup_{\iota\circ f } H'.
\end{equation}
Composing with the LMO invariant of the resulting homology 3-sphere $M_f$, we obtain a map $\mc{I}(\Sigma_{g,1})\ra 
\mc{A}(\emptyset)$, which is of some importance \cite{morita-casson}.

This kind of action of the  Torelli group on the set of integral homology spheres with Heegaard splitting can 
equivalently be described in the context of homology cylinders via the mapping cylinder construction and the 
topological products described in Section \ref{stdpairings}.
Indeed, the mapping cylinder of $\varphi\in MC(\Sigma)$, denoted  $C(\varphi)=(1_\Sigma,\varphi,Id)$, is a special case 
of cobordism over $\Sigma$, and restricting to $\varphi\in  \mc{I}(\Sigma)$, we obtain a
homomorphism of monoids
\begin{equation}\label{eq:TorHCyl}
\aligned \mc{I}(\Sigma)&\ra \mc{HC}(\Sigma),\\
\varphi&\mapsto C(\varphi).\\
\endaligned
\end{equation}
Using this construction, we may reformulate \eqref{eq:Mf} as $f\mapsto M_f= H \cup_\iota (C(f)  \ast   H') $, thus 
making precise the sense in which \eqref{eq:TorHCyl} describes an action on the set of integral homology spheres.

More generally, we can view the homomorphism \eqref{eq:TorHCyl} as an action of $\mc{I}(\Sigma)$
on the vector space generated by homology cylinders over $\Sigma$ by stacking, i.e., $$M\mapsto M\cdot  C(\varphi) $$ 
for $M\in \mc{HC}({\Sigma})$ and $\varphi\in \mc{I}(\Sigma)$.
Similarly, we have the conjugation action  $$M\mapsto C(\varphi)\cdot M \cdot C(\varphi^{-1})$$ of $\varphi\in MC(\Sigma)$ on 
homology cylinders over $\Sigma$, where if $M=(1_\Sigma)_L$ is a homology cylinder over $\Sigma$, then
\begin{equation}\label{arewegonnafinishthissomeday}
C({\varphi})\cdot (1_\Sigma)_L\cdot C(\varphi^{-1})=C(\varphi)\cdot C(\varphi^{-1}) \cdot
(1_\Sigma)_{\varphi^{-1}(L)}=(1_\Sigma)_{\varphi^{-1}(L)}\in \mathcal{HC}(\Sigma).
\end{equation}
Analogously, we have a shelling action $$H\mapsto C(\varphi)\ast H$$ of the Lagrangian preserving subgroup 
$MC(\Lambda)$  on the  set $V(\Sigma_g, \Lambda)$ of  genus $g$ homology handlebodies with Lagrangian $\Lambda$.

Recall that the preferred marked bordered fatgraphs $C_g$ and $\overline{C}_g$ defined in $\S$\ref{models} induce 
isomorphisms
\[ \nabla_{C_g}\colon \overline{\mc{H}}_{\Sigma_{g,1}}\xra{\cong} \mc{A}_{2g}
\quad \textrm{and} \quad
 \nabla_{\overline C_g}\colon V(\Sigma_g,\Lambda^{st})=\overline{\mc{H}}_{\Sigma_{0,g+1}}\xra{\cong} \mc{A}_g, \]
 and using these, we thus obtain representations
\[
\xi\colon MC(\Sigma_{g,1})\ra \Aut(\mc{A}_{2g})
\quad \textrm{and} \quad
\zeta: MC(\Lambda^{st})\ra \Aut(\mc{A}_{g}) \]
respectively induced by conjugation and the shelling action.
This section relies on the fundamental relationship between fatgraphs and mapping class groups provided by the Ptolemy 
groupoid of decorated \Teich theory to describe these various actions in a purely combinatorial way.
\subsection{Ptolemy Groupoid}\label{ptolemy}
We shall restrict for convenience to surfaces with only one boundary component.
Given a bordered fatgraph $G$,
define the  \emph{Whitehead move} $W$ on a non-tail edge $e$ of the uni-trivalent fatgraph $G$ to be the  modification 
that collapses $e$ to a vertex of valence four and then expands this vertex in the unique distinct way to produce the 
uni-trivalent fatgraph $G'$; see Figure \ref{fig:WMove}. We shall write
either $W\colon G\ra G'$ or $G\xra{W} G'$ under these circumstances.

Not only do markings of fatgraphs evolve in a natural way under Whitehead moves, so that we can unambiguously speak of 
Whitehead moves on marked fatgraphs, but also there is a natural identification of the edges of $G$ and $G'$.  
Furthermore, there are three families of finite sequences of Whitehead moves, called the involutivity, commutativity, 
and pentagon relations, which leave invariant each marked
fatgraph $G$, cf.\ \cite {penner93,MP}.
\begin{figure}[h!]
\includegraphics[width=3 in]{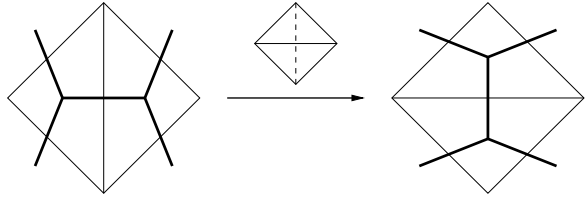}
\caption{A Whitehead move of a fatgraph superimposed on top of the diagonal exchange of its dual triangulation. }
\label{fig:WMove}
\end{figure}

The \emph{Ptolemy groupoid} $\Pt(\Sigma)$ of a surface $\Sigma$ with boundary is defined as the groupoid with objects 
given by marked bordered uni-trivalent fatgraphs $G\hra\Sigma$ and morphisms given by sequences of Whitehead moves 
modulo the involutivity, commutativity, and pentagon relations.\footnote{The term ``Ptolemy groupoid'' is sometimes 
used to refer to the groupoid whose objects are $MC(\Sigma)$-orbits of uni-trivalent fatgraphs and whose morphisms are
$MC(\Sigma)$-orbits of pairs of such with the natural composition.  We prefer to call this the \emph{mapping class
groupoid} since it gives a combinatorial model for the fundamental path groupoid of Riemann's moduli space. }

$\Pt(\Sigma)$ provides a $MC(\Sigma)$-equivariant combinatorial model of the fundamental path groupoid of the decorated 
\Teich space of $\Sigma$, cf.\ \cite{penner, penner93,Penner04}.
As such, given any ``point'' in $\Pt(\Sigma)$, i.e., any marked bordered fatgraph $G\hra \Sigma$, each mapping class 
$\varphi\in MC(\Sigma)$ is represented by a unique morphism from $G$ to $\varphi (G)$ in $\Pt(\Sigma)$, where $\varphi 
(G)$ is the marked fatgraph that arises by postcomposing the marking $G\hra\Sigma$ of $G$ with $\varphi$.

For any marked bordered fatgraph  $G\hra\Sigma$, we may thus think of $MC(\Sigma)$ as being a set of equivalence 
classes of paths beginning at the point $G\hra\Sigma$ and ending at a fatgraph combinatorially equivalent to $G$ but potentially
with a different marking in $\Sigma$.
 In this way, we get a presentation of the mapping class group of $\Sigma$:
\begin{theorem}[\cite{Penner04}]
For a surface $\Sigma$ with boundary, the mapping class group $MC(\Sigma)$ has a presentation with generators given by sequences of Whitehead moves on marked 
fatgraphs in $\Sigma$  beginning and ending at combinatorially isomorphic fatgraphs.  The relations in this groupoid 
are given by identifying two sequences if they differ by a finite number of insertions or deletions of involutivity, 
commutativity, and pentagon relations.
\end{theorem}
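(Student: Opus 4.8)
The plan is to derive the presentation from the ideal cell decomposition of the decorated \Teich space constructed in \cite{penner,penner93,Penner04}, via the standard principle that the fundamental group of an aspherical space is computed from the $2$--skeleton of any regular cell structure on it. Thus the proof is really a translation of known decorated \Teich theory into the language of fatgraphs and Whitehead moves.

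First I would recall the relevant facts. For $\Sigma$ with nonempty boundary, the (relative, cf.\ \cite{Penner04}) decorated \Teich space $\widetilde{\mathcal T}(\Sigma)$ is homeomorphic to a Euclidean space and carries an $MC(\Sigma)$--invariant ideal cell decomposition whose open cells are indexed by marked bordered uni-trivalent fatgraphs $G\hookrightarrow\Sigma$ (more generally by all marked spines, with one coordinate per edge), the cell of $G'$ lying in the closure of the cell of $G$ precisely when $G'$ is obtained from $G$ by collapsing a set of non-tail edges. In particular the top-dimensional cells correspond to the (uni-)trivalent marked fatgraphs, two of them share a codimension-one face exactly when the fatgraphs differ by a single Whitehead move, and --- this is the essential structural input of \cite{penner93,Penner04} --- every codimension-two face is surrounded either by a Whitehead move and its inverse (\emph{involutivity}), by two pairs of Whitehead moves on disjoint parts of the fatgraph (\emph{commutativity}), or by the five Whitehead moves of a \emph{pentagon}. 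Dualizing, one gets a $2$--complex whose vertices are trivalent marked fatgraphs, whose edges are Whitehead moves, and whose $2$--cells are exactly the involutivity, commutativity and pentagon relations; by construction this $2$--complex, with edge-paths composed modulo its $2$--cells, is the groupoid $\Pt(\Sigma)$ of $\S$\ref{ptolemy}. Since $\widetilde{\mathcal T}(\Sigma)$ is contractible, $\Pt(\Sigma)$ has a \emph{unique} morphism between any two of its objects.

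Next I would pass to the quotient. Because $\partial\Sigma\neq\emptyset$ and diffeomorphisms are required to fix $\partial\Sigma$ pointwise, $MC(\Sigma)$ is torsion-free and acts freely and properly discontinuously on $\widetilde{\mathcal T}(\Sigma)$, hence freely on the set of marked fatgraphs of any fixed combinatorial type; moreover any two markings of a given combinatorial fatgraph differ by an element of $MC(\Sigma)$, so the action is transitive on each combinatorial type. Hence the quotient $2$--complex $\mathcal Q:=\Pt(\Sigma)/MC(\Sigma)$ --- one vertex per combinatorial isomorphism class of trivalent fatgraph, edges for Whitehead moves up to combinatorial equivalence, and the three families of $2$--cells --- is aspherical with fundamental group $MC(\Sigma)$, and $\Pt(\Sigma)$ is (the edge-path groupoid of) its universal cover.

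Finally, fix a marked trivalent fatgraph $G\hookrightarrow\Sigma$, representing a vertex $[G]$ of $\mathcal Q$. The standard edge-path description of $\pi_1(\mathcal Q,[G])$ takes as generators the edge-loops based at $[G]$; lifting to $\Pt(\Sigma)$ these are precisely the sequences of Whitehead moves $G\xrightarrow{W_1}\cdots\xrightarrow{W_k}\varphi(G)$ ending at a marked fatgraph combinatorially isomorphic to $G$, and the assignment sending such a sequence to the mapping class $\varphi$ determined by its endpoint is well-defined (freeness of the action) and surjective (connectivity of $\Pt(\Sigma)$). Two such sequences represent the same element of $\pi_1(\mathcal Q,[G])=MC(\Sigma)$ iff they are homotopic rel endpoints in $\mathcal Q$, which by the classification of $2$--cells means exactly that they differ by a finite number of insertions and deletions of involutivity, commutativity and pentagon relations; checking that the assignment respects composition (a loop $G\to\varphi(G)$ followed by the $\varphi$--translate of a loop $G\to\psi(G)$ represents $\varphi\psi$) upgrades it to a group isomorphism, which is the asserted presentation. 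The only genuinely substantial ingredient is the codimension-two cell classification quoted above, established in \cite{penner93,Penner04}; I expect that to be the main obstacle, with the asphericity/freeness bookkeeping --- which is where the hypothesis $\partial\Sigma\neq\emptyset$ enters, excluding the finite-order mapping classes that would otherwise create orbifold points --- a distant second.
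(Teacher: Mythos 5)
Your proposal is correct and takes essentially the same route as the paper, which states this theorem without proof by quoting Penner's decorated Teichm\"uller theory: the $MC(\Sigma)$-invariant ideal cell decomposition dual to marked fatgraphs, contractibility of the decorated Teichm\"uller space, the codimension-two classification giving the commutativity and pentagon cells, and the free action of $MC(\Sigma)$ (boundary fixed pointwise) yielding the presentation by the standard edge-path argument. The only slip is the claim that involutivity labels a codimension-two face --- it does not; it is the automatic relation from crossing a codimension-one face and returning --- but this does not affect the validity of the argument.
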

In a similar way,  the Torelli group $\mathcal{I}(\Sigma)$ (and indeed each term of the Johnson filtration 
\cite{johnson}) likewise admits an analogous combinatorial presentation as in \cite{MP}.

By a \emph{representation}  $\Pt(\Sigma)\to K$ of the Ptolemy groupoid in some group $K$, we mean a composition-preserving map $\Mor(\Pt(\Sigma))\ra K$ from 
the morphisms of $\Pt(\Sigma)$.
In other words, a representation of $\Pt(\Sigma)$ is a morphism that assigns an element of $K$ to each Whitehead move 
such that the composition is trivial for the involutivity, commutativity, and pentagon relations.
\subsection{The explicit Ptolemy action on $\mc{A}_h$.}\label{explicitlift}
Our first representation of $\Pt(\Sigma)$ captures the dependence of $\nabla_G$ on the choice of marked bordered 
fatgraph $G\hra \Sigma$ giving a representation as automorphisms of an appropriate space of Jacobi diagrams extending 
the conjugation action of the mapping class group on homology cylinders.   Recall from Corollary \ref{cor:iso} that for 
a genus $g$ surface $\Sigma$ with $n\ge 1$ boundary components, any marked bordered fatgraph $G\hra\Sigma$ provides a 
graded isomorphism
\[ \nabla_G\colon \overline{\mc{H}}_\Sigma \xra{\cong} \mc{A}_{h}, \]
\noindent where $h=2g+n-1$, as a consequence of the universality of the invariant $\nabla_G$.
Thus, for any marked bordered fatgraphs $G$ and $G'$, we get isomorphisms $\nabla_G$ and $\nabla_{G'}$ of $ 
\overline{\mc{H}}_\Sigma$ with $\mc{A}_{h}$.  As a formal consequence, we obtain an explicit representation of the
Ptolemy groupoid:
\begin{theorem}
The map
\[ (G\xra{W} G') \mapsto \nabla_{G'} \circ\nabla_G\inv. \]
defines a representation of the  Ptolemy groupoid acting on  $\mc{A}_{h}$
 \[ \hat \xi\colon \Pt(\Sigma)\ra \Aut(\mc{A}_{h}). \]
For $\Sigma=\Sigma_{g,1}$, this representation extends the representation $\xi\colon MC(\Sigma_{g,1})\ra 
\Aut(\mc{A}_{2g})$ induced by the conjugation action in the sense that for any sequence of Whitehead moves
$C_g\xra{W_1}\dotsm\xra{W_k} \varphi(C_g)$ representing $\varphi\in MC(\Sigma_{g,1})$, we have the identity 
$\hat\xi(W_1)\circ\dotsm \circ\hat\xi(W_k)=\xi(\varphi)$.
\end{theorem}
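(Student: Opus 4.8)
The plan is to observe that the representation property is formal given Corollary~\ref{cor:iso}, and then to establish a single equivariance identity for $\nabla_G$ under the mapping class group action, from which the compatibility with the conjugation representation $\xi$ drops out.

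\emph{The representation.} By Corollary~\ref{cor:iso}, every marked bordered fatgraph $G\hra\Sigma$ gives a filtered isomorphism $\nabla_G\colon\overline{\mc{H}}_\Sigma\xra{\cong}\mc{A}_h$, so $\nabla_{G'}\circ\nabla_G^{-1}\in\Aut(\mc{A}_h)$ for any two such $G,G'$. A morphism of $\Pt(\Sigma)$ from $G$ to $G'$ is (an equivalence class of) a sequence $G=G_0\xra{W_1}\cdots\xra{W_k}G_k=G'$; since $\hat\xi$ is to be composition-preserving and $\hat\xi(W_i)=\nabla_{G_i}\circ\nabla_{G_{i-1}}^{-1}$, the only possible value of $\hat\xi$ on this morphism is the composite of the $\hat\xi(W_i)$, in which all intermediate factors cancel in pairs, leaving $\nabla_{G_k}\circ\nabla_{G_0}^{-1}=\nabla_{G'}\circ\nabla_G^{-1}$. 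This output depends only on the source and target of the morphism, hence is automatically unchanged by the involutivity, commutativity and pentagon relations and is well defined on $\Pt(\Sigma)$; it lies in $\Aut(\mc{A}_h)$, sends identity morphisms to $\id_{\mc{A}_h}$, and is composition-preserving because $(\nabla_{G''}\circ\nabla_{G'}^{-1})\circ(\nabla_{G'}\circ\nabla_G^{-1})=\nabla_{G''}\circ\nabla_G^{-1}$. So $\hat\xi\colon(G\xra{W}G')\mapsto\nabla_{G'}\circ\nabla_G^{-1}$ is a well-defined representation $\Pt(\Sigma)\to\Aut(\mc{A}_h)$.

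\emph{The equivariance identity.} Fix $\Sigma=\Sigma_{g,1}$ and write $\Xi(\varphi)\colon M\mapsto C(\varphi)\cdot M\cdot C(\varphi^{-1})$ for the conjugation action on $\overline{\mc{H}}_{\Sigma_{g,1}}$; it is induced by the diffeomorphism $\varphi\times\id$ of $1_{\Sigma_{g,1}}$, preserves the Goussarov--Habiro filtration, satisfies $\Xi(\varphi^{-1})=\Xi(\varphi)^{-1}$, and by \eqref{arewegonnafinishthissomeday} has $\Xi(\psi)\big((1_\Sigma)_L\big)=(1_\Sigma)_{\psi^{-1}(L)}$ on surgery presentations. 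By construction $\xi(\varphi)=\nabla_{C_g}\circ\Xi(\varphi)\circ\nabla_{C_g}^{-1}$ (cf.\ $\S$\ref{actions}). I claim
\[ \nabla_{\varphi(G)}=\nabla_G\circ\Xi(\varphi)\qquad\text{for every }\varphi\in MC(\Sigma_{g,1})\text{ and every marked }G\hra\Sigma_{g,1}. \]
To prove this I would trace through $\S$\ref{sec:nabla}: since $\varphi$ is orientation-preserving and fixes $\partial\Sigma$ (so $\varphi(q)=q$), the skinny surface and hence $P_{\varphi(G)}=(\varphi\times\id)(P_G)$; the total boundary cycle of $\varphi(G)$, read from the tail, is the $\varphi$-image of that of $G$ as a cyclically ordered sequence, so the greedy maximal tree, the ordered oriented generators, and the induced forbidden sectors all transport under $\varphi$; and $K_{\varphi(G)}$, $I_{\varphi(G)}$ may be taken to be $(\varphi\times\id)(K_G)$, $(\varphi\times\id)(I_G)$, the residual isotopy ambiguity being invisible to the AMR invariant. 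Hence, for $L$ in admissible position for $P_G$, the link $(\varphi\times\id)(L)$ is admissible for $P_{\varphi(G)}$, the box $q$-tangles and their bracketings are carried onto one another by $\varphi$, and the linking matrices of $(\varphi\times\id)(L)\cup K_{\varphi(G)}$ and $L\cup K_G$ coincide; so $\check V_{\varphi(G)}\big((\varphi\times\id)(L)\cup K_{\varphi(G)}\cup I_{\varphi(G)}\big)=\check V_G(L\cup K_G\cup I_G)$ and $\sigma^{(\varphi\times\id)(L)\cup K_{\varphi(G)}}_\pm=\sigma^{L\cup K_G}_\pm$, whence $\widehat\nabla_{\varphi(G)}\big((1_\Sigma)_{(\varphi\times\id)(L)}\big)=\widehat\nabla_G\big((1_\Sigma)_L\big)$ by \eqref{nabla+}--\eqref{nabla}, and a fortiori the same holds for $\nabla$. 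Since $(1_\Sigma)_{(\varphi\times\id)(L)}=\Xi(\varphi^{-1})\big((1_\Sigma)_L\big)$ and every homology cylinder over $\Sigma_{g,1}$ has such a presentation, this reads $\nabla_{\varphi(G)}\circ\Xi(\varphi^{-1})=\nabla_G$ on $\overline{\mc{H}}_{\Sigma_{g,1}}$, i.e.\ $\nabla_{\varphi(G)}=\nabla_G\circ\Xi(\varphi)$.

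\emph{Conclusion and main obstacle.} Applying the claim with $G=C_g$ gives $\nabla_{\varphi(C_g)}\circ\nabla_{C_g}^{-1}=\nabla_{C_g}\circ\Xi(\varphi)\circ\nabla_{C_g}^{-1}=\xi(\varphi)$, while by the telescoping of Part~1 the value of $\hat\xi$ on the morphism represented by any sequence $C_g\xra{W_1}\cdots\xra{W_k}\varphi(C_g)$ representing $\varphi$ — written $\hat\xi(W_1)\circ\cdots\circ\hat\xi(W_k)$ in the statement — equals $\nabla_{\varphi(C_g)}\circ\nabla_{C_g}^{-1}$; combining the two yields $\hat\xi(W_1)\circ\cdots\circ\hat\xi(W_k)=\xi(\varphi)$. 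Everything except the equivariance identity is formal, so that identity is the crux: the delicate task is to verify that \emph{all} the combinatorial and geometric data attached to $G$ (the greedy tree and its forbidden sectors, the system of linking pairs, the system of latches, and the admissible positioning with its bracketings) genuinely transports under $\varphi$ and that the leftover isotopy choices do not affect $V_G$, and to keep the single inverse in $\Xi(\psi)\big((1_\Sigma)_L\big)=(1_\Sigma)_{\psi^{-1}(L)}$ straight so that the identity comes out as $\nabla_{\varphi(G)}=\nabla_G\circ\Xi(\varphi)$ rather than its inverse.
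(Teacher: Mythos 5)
Your proposal is correct and takes essentially the same route as the paper: the representation property is the same formal observation that the value depends only on the source and target marked fatgraphs (so relations act trivially), and the crux is the equivariance $\nabla_{\varphi(G)}=\nabla_G\circ\bigl(M\mapsto C(\varphi)\cdot M\cdot C(\varphi^{-1})\bigr)$, which the paper records as $\nabla_n^G(L)=\nabla_n^{\varphi(G)}(\varphi(L))$ ``by construction'' and then combines with the surgery formula for the conjugation action exactly as you do. The only difference is that you spell out the transport under $\varphi$ of the data $P_G$, $\tau_G$, the forbidden sectors, $K_G$ and $I_G$, which the paper leaves implicit.
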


Before giving the proof, we first give the following topological interpretation of the automorphism 
associated to a morphism from $G$ to $G'$ in $\Pt(\Sigma)$.  
Given an element in $\mc{A}_h$, we can pull it back via $\nabla_G\inv$ to an element of $\overline{\mc{H}}_\Sigma$,
represented by a formal series $L$ of framed links in $1_\Sigma$ in admissible position with respect to the polygonal
decomposition $P_G$.  We then evolve $G$ by a sequence of Whitehead moves to a new marked fatgraph $G'$, and isotope
the links in $L$ accordingly to put them in admissible position with respect to the new  polygonal decomposition
$P_{G'}$.
Evaluating $\nabla_{G'}$ on the resulting series of links then provides a new element of $\mc{A}_h$.

\begin{proof}
The fact that the above action defines a representation of the Ptolemy groupoid follows easily since any sequence of 
Whitehead moves representing a trivial morphism of $\Pt(\Sigma)$ begins and ends at identical marked fatgraphs and thus  
must give the trivial action.

For any  link $L\subset 1_\Sigma$, we have $\nabla_n^G(L)=\nabla_n^{\varphi(G)}(\varphi(L))$
by construction, so that
\[ \nabla_{\varphi(G)}((1_\Sigma)_L)=\nabla_{G}((1_\Sigma)_{\varphi^{-1}(L)}). \]
Thus by (\ref{arewegonnafinishthissomeday}) for any $M\in \overline{\mc{H}}_\Sigma$,
we have
\[
\xi(\varphi)(\nabla_G(M))= \nabla_G(C({\varphi})\cdot M\cdot C(\varphi^{-1}))=\nabla_{\varphi(G)}(M),
\]
and setting $G=C_g$, the result follows.
\end{proof}
\subsection{The Ptolemy groupoid action on handlebodies}\label{sect:hb}
A similar extension of the shelling action $\zeta$ arises as follows.
By Lemma 7.4 of \cite{ABP}, there is an algorithm which produces a representation 
$$\Pt(\Sigma_{g,1})\ra MC(\Lambda^{st})$$ of the Ptolemy groupoid extending the identity on $MC(\Lambda^{st})$.  Thus, 
we obtain:
\begin{proposition}
Let $\Sigma_g$ be a closed genus $g$ surface.
Fix a disc in $\Sigma_g$ and let $\Sigma_{g,1}$ be its complement.  Then we have an  explicit algorithmically defined 
representation
 \[ \hat \zeta: \Pt(\Sigma_{g,1})\ra \Aut(\mc{A}_g) \]
which extends the shelling action $\zeta: MC(\Lambda^{st})\ra \Aut(\mc{A}_g)$.
\end{proposition}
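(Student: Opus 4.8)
The plan is to realize $\hat\zeta$ as the composition of the algorithmic Ptolemy representation furnished by Lemma~7.4 of \cite{ABP} with the shelling representation $\zeta$, and then to establish the two required properties by a short formal argument, in close parallel with the proof of the preceding theorem on $\hat\xi$. Recall that Lemma~7.4 of \cite{ABP} provides an explicit algorithm assigning to each Whitehead move $G\xra{W}G'$ of marked bordered fatgraphs in $\Sigma_{g,1}$ a mapping class $r(W)\in MC(\Lambda^{st})$ in such a way that the assignment $W\mapsto r(W)$ is invariant under the involutivity, commutativity, and pentagon relations, and hence descends to a representation
\[ r\colon \Pt(\Sigma_{g,1})\ra MC(\Lambda^{st}) \]
which extends the identity, i.e., any sequence $C_g\xra{W_1}\dotsm\xra{W_k}\varphi(C_g)$ representing $\varphi\in MC(\Lambda^{st})$ in the sense of $\S$\ref{ptolemy} satisfies $r(W_1)\circ\dotsm\circ r(W_k)=\varphi$. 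I would then simply set $\hat\zeta:=\zeta\circ r$, that is $\hat\zeta(W):=\zeta(r(W))\in\Aut(\mc{A}_g)$.

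First I would observe that $\hat\zeta$ is a representation of $\Pt(\Sigma_{g,1})$: since $\zeta\colon MC(\Lambda^{st})\ra\Aut(\mc{A}_g)$ is a group homomorphism and $r$ is a groupoid representation, the composite is composition-preserving and in particular sends each defining relation of $\Pt(\Sigma_{g,1})$ to $\id_{\mc{A}_g}$. For the extension statement, given a sequence $C_g\xra{W_1}\dotsm\xra{W_k}\varphi(C_g)$ representing $\varphi\in MC(\Lambda^{st})$, the identity-extension property of $r$ yields $r(W_1)\circ\dotsm\circ r(W_k)=\varphi$, and applying $\zeta$ gives $\hat\zeta(W_1)\circ\dotsm\circ\hat\zeta(W_k)=\zeta(\varphi)$, exactly as required. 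Finally, $\hat\zeta$ is ``explicit'' and ``algorithmically defined'' precisely because each $r(W)$ is produced by the explicit combinatorial algorithm of \cite{ABP}, while $\zeta(\varphi)$ itself admits the diagrammatic description $\zeta(\varphi)\colon x\mapsto \nabla^r_{C_g}(C(\varphi))\star x$ afforded by Equation~(\ref{mTm3}) of Theorem~\ref{thmpairings} together with the $\bullet$/$\star$ formalism of $\S$\ref{pairings}.

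The only point demanding genuine care — and the step I expect to be the main, if modest, obstacle — is a correct bookkeeping of what ``$r$ extends the identity on $MC(\Lambda^{st})$'' means relative to the definition of $\zeta$. One must verify that the identification, inside $\Pt(\Sigma_{g,1})$, of a closed loop at $C_g$ with an element of $MC(\Lambda^{st})$ used in the conclusion of Lemma~7.4 of \cite{ABP} agrees with the one implicit in the shelling action $H\mapsto C(\varphi)\ast H$ (in particular that $\varphi\in MC(\Lambda^{st})\subset MC(\Sigma_g)$ is obtained by capping off $\Sigma_{g,1}$, and that the shelling action indeed preserves the Lagrangian $\Lambda^{st}$, so that $\zeta$ is genuinely valued in $\Aut(\mc{A}_g)$). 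Once these identifications are matched, the proposition follows formally.
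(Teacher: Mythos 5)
Your proposal is correct and follows exactly the paper's route: the paper likewise obtains $\hat\zeta$ by composing the algorithmic representation $\Pt(\Sigma_{g,1})\ra MC(\Lambda^{st})$ of Lemma 7.4 of \cite{ABP}, which extends the identity on $MC(\Lambda^{st})$, with the shelling action $\zeta$, the extension property then being formal. Your additional bookkeeping remarks only spell out what the paper leaves implicit.
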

Owing to its dependence on the complicated
algorithms in \cite{ABP}, the action on $\mc{A}_g$ obtained in this way is more complicated  than the action on $\mc{A}_{h}$ described in the previous section.
\subsection{Extension of the LMO invariant to the Ptolemy groupoid}\label{sec:liftLMO}
In this section, we give a kind of Ptolemy groupoid action on finite type invariants of integral homology spheres which 
extends the usual action of the Torelli group via Heegaard decomposition.
More precisely, we give a Ptolemy groupoid action on finite type invariants of homology cylinders over $\Sigma$ which 
extends the stacking action of $\mc{I}(\Sigma)$ on $\overline{\mc{H}}_\Sigma$, and which in the case of 
$\Sigma=\Sigma_{g,1}$ induces a map from $\Pt(\Sigma_{g,1})$ to $\mc{A}(\emptyset)$  extending the analogous map of the 
Torelli group $\mc{I}(\Sigma_{g,1})$.  

We begin by recalling  Corollary 7.1 and Theorem 8.1 of \cite{ABP}, which together
give\footnote{The proof in \cite{ABP} relies on a sequence of algorithms, beginning with the greedy algorithm, which 
produces a sequence of Whitehead moves taking a given fatgraph to a symplectic one, followed by an algorithm which
manipulates the homological information associated to each edge of the symplectic fatgraph; this last algorithm
apparently has a paradigm in K-theory. }
that any choice of a  marked bordered fatgraph  $G\hra \Sigma_{g,1}$ determines a representation
\[ \hat{id}_G\colon \Pt(\Sigma_{g,1})\ra \mc{I}(\Sigma_{g,1}) \]
of the Ptolemy groupoid which extends the identity homomorphism of $\mc{I}(\Sigma_{g,1})$.  Let 
$\hat{id}=\hat{id}_{C_g}$ be the representation
provided by  the marked fatgraph $C_g\hra \Sigma_{g,1}$ defined in $\S$\ref{models}.
Define a representation of the Ptolemy groupoid of $\Sigma_{g,1}$
\[
\rho: \Pt(\Sigma_{g,1})\ra\mc{A}_{2g},
\]
to be a composition-preserving map, where the target space is imbued with the stacking product $\bullet$, by setting
\[ \rho(W) := \nabla^r_{C_g}\left( C(\hat{id}(W)) \right). \]

\begin{theorem}\label{thm:lmo}
The representation $\rho: \Pt(\Sigma_{g,1})\ra\mc{A}_{2g}$ of the Ptolemy groupoid of $\Sigma_{g,1}$ provides  an 
extension of the LMO invariant of integral homology spheres to the Ptolemy groupoid in the following sense:
Let $f\in \mathcal{I}_{g,1}$ and let
 $$ G\xra{W_1} G_1\xra{W_2}...\xra{W_k}G_k=f(G) $$
be a sequence of Whitehead moves representing $f$.  Let $M=H\cup_\iota H'$ be a genus $g$ Heegaard splitting of an 
integral homology sphere $M$.
Then the LMO invariant of the integral homology $3$-sphere $M_f=H\cup_{\iota\circ f} H'$ is given by
\[  Z^{LMO}(M_f)=\left\langle v , \left(\rho(W_1)\bullet \rho(W_2)\bullet \dotsm  \bullet 
\rho(W_k)\right)\star v' \right\rangle, \]
where $v=\nabla^r_{\overline C_g}(H)\in \mc{A}_g$ and $v'=\nabla^r_{\overline C_g}(H')\in \mc{A}_g$.
\end{theorem}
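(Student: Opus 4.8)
The plan is to combine the three diagrammatic gluing formulas of Theorem~\ref{thmpairings} with the topological reformulation of the Heegaard construction in terms of the shelling and pairing operations.  First I would recall from $\S$\ref{actions} that $M_f = H\cup_\iota\bigl(C(f)\ast H'\bigr)$, which rewrites the closed homology sphere $M_f$ as the pairing of the homology handlebody $H$ with the homology handlebody $C(f)\ast H'$ obtained by shelling $H'$ by the mapping cylinder $C(f)$.  Applying the pairing formula (\ref{mTm1}) and then the shelling formula (\ref{mTm3}) gives
\[ Z^{LMO}(M_f)=\bigl\langle \nabla^r_{\overline C_g}(H),\ \nabla^r_{\overline C_g}(C(f)\ast H')\bigr\rangle
=\bigl\langle v,\ \nabla^r_{C_g}(C(f))\star v'\bigr\rangle, \]
with $v=\nabla^r_{\overline C_g}(H)$ and $v'=\nabla^r_{\overline C_g}(H')$ as in the statement.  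So it remains to identify $\nabla^r_{C_g}(C(f))\in\mc{A}_{2g}$ with the $\bullet$-product $\rho(W_1)\bullet\dotsm\bullet\rho(W_k)$.

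The key step is therefore to show that $\rho$ is multiplicative with respect to composition of morphisms in $\Pt(\Sigma_{g,1})$ and that it recovers $\nabla^r_{C_g}(C(f))$ on the morphism represented by $W_1,\dots,W_k$.  Since $\hat{id}=\hat{id}_{C_g}\colon \Pt(\Sigma_{g,1})\to\mc{I}(\Sigma_{g,1})$ is a representation extending the identity homomorphism, the composite Whitehead sequence $G\xra{W_1}\dotsm\xra{W_k}f(G)$ maps under $\hat{id}$ to the product $\hat{id}(W_1)\cdots\hat{id}(W_k)=f$ in $\mc{I}(\Sigma_{g,1})$.  Because the mapping cylinder construction (\ref{eq:TorHCyl}) is a monoid homomorphism, $C(f)=C(\hat{id}(W_1))\cdot\dotsm\cdot C(\hat{id}(W_k))$ in $\mc{HC}(\Sigma_{g,1})$.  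Now apply $\nabla^r_{C_g}$ and use the stacking formula (\ref{mTm2}), which says precisely that $\nabla^r_{C_g}$ intertwines the stacking product with $\bullet$; by induction on $k$ this yields
\[ \nabla^r_{C_g}(C(f))=\nabla^r_{C_g}(C(\hat{id}(W_1)))\bullet\dotsm\bullet\nabla^r_{C_g}(C(\hat{id}(W_k)))=\rho(W_1)\bullet\dotsm\bullet\rho(W_k), \]
by the definition $\rho(W):=\nabla^r_{C_g}(C(\hat{id}(W)))$.  Substituting this into the displayed identity for $Z^{LMO}(M_f)$ completes the argument.

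The routine points I would still need to verify carefully are: that the reduction $M_f=H\cup_\iota(C(f)\ast H')$ is indeed valid at the level of the equivalence classes used in $\overline{\mc H}$ and $V(\Sigma_g,\Lambda^{st})$ (this is essentially the content of the remark following (\ref{arewegonnafinishthissomeday}) together with the discussion in $\S$\ref{stdpairings}); that $C(f)\ast H'$ has standard Lagrangian $\Lambda^{st}$ so that $\nabla^r_{\overline C_g}$ is defined on it and formula (\ref{mTm3}) applies — this uses $f\in\mc{I}(\Sigma_{g,1})\subset MC(\Lambda^{st})$; and that all three formulas in Theorem~\ref{thmpairings} are stated for exactly the normalizations $\nabla^r_{C_g}$ and $\nabla^r_{\overline C_g}$ appearing here, so no further renormalization constants intrude.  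The main obstacle is making sure the topological identity $M_f = H\cup_\iota(C(f)\ast H')$ is interpreted with the correct orientation conventions and boundary identifications so that the pairing and shelling of $\S$\ref{stdpairings} compose exactly as Heegaard re-gluing by $\iota\circ f$; once that bookkeeping is pinned down, the rest is a direct chain of the already-established formulas.
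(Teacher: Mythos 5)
Your proposal is correct and follows essentially the same route as the paper: the paper's proof likewise uses that $\hat{id}$ extends the identity on $\mc{I}(\Sigma_{g,1})$ to get $C(f)=C(\hat{id}(W_1))\cdot\dotsm\cdot C(\hat{id}(W_k))$, invokes $M_f=H\cup_\iota(C(f)\ast H')$, and then concludes by the three formulas of Theorem~\ref{thmpairings}. You have merely spelled out the chain of substitutions (pairing, then shelling, then stacking) that the paper compresses into one line.
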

\begin{proof}
Since $\hat{id}$ extends the identity homomorphism of $\mc{I}(\Sigma_{g,1})$, we therefore have $\hat{id}(W_1)\circ 
\hat{id}(W_2)\circ ...\circ \hat{id}(W_k) = f$, hence
$M_{W_1}\cdot M_{W_2}\cdot ...\cdot M_{W_k}=C(f)$, where $M_{W}$ denotes $C(\hat {id} (W))$.
Since $M_f=H\cup_\iota (C(f)\ast H')$, the formula follows from Theorem \ref{thmpairings}.
\end{proof}

Considering the map $f\mapsto S^3_f$ induced by the standard Heegaard decomposition of $S^3$,
Theorem \ref{thm:lmo} shows that for a sequence $G\xra{W_1} G_1\xra{W_2}...\xra{W_k}G_k=f(G) $ of Whitehead moves 
representing $f$, we have
\[  Z^{LMO}((S^3)_f)=\left\langle v_0 , \left(\rho(W_1)\bullet \rho(W_2)\bullet \dotsm  \bullet \rho(W_k)\right)\star 
v_0 \right\rangle, \]
where the diagrammatic constant $v_0=\nabla^r_{\overline C_g}(H_g)\in \mc{A}_g$ can easily be computed as follows.
By definition, we have $\nabla^r_{\overline C_g}(H_g)=Z^{LMO}(C,T_{g,0})$, where $T_{g,0}$ is the q-tangle of Figure 
\ref{tcap}.  By (\ref{Z1}), the Kontsevich integral $Z(T_{g,0})\in \mathcal{A}(\uparrow^g)$ of this tangle is thus 
given by including a $\sqrt{\nu}$ on each copy of $\uparrow$.  It follows that
\[ v_0=\sqcup_{i=1}^g (\chi^{-1}_{\{i\}}\sqrt{\nu})\in \mathcal{A}_g, \]
where an explicit formula for $\nu$ is given in \cite{BNGRT2}.
\subsection{Extension of the first Johnson homomorphism} \label{sec:lifttau1}
In \cite{MP}, a representation of the Ptolemy groupoid was introduced using the notion of an $H$-marking of a fatgraph 
$G$ and shown to be an extension of the first Johnson homomorphism $\tau_1$ to the Ptolemy groupoid.
In this section, we show how a variation of the invariant $\nabla_G$ can be used to realize this extension.
\subsubsection{General latches}\label{general}
Let $G\hra\Sigma$ be a marked bordered fatgraph in a surface $\Sigma$.  We begin by introducing a generalized notion of 
a system of latches in $1_\Sigma$ and thus of the invariant $\nabla_G$.  In fact, the main property of the system of 
latches $I_G$ we used so far in this paper, besides the fact that it is determined by the fatgraph $G$, is that it 
provides a dual basis in homology.

We define a general latch for $G$ as an embedded interval in the boundary of $\Sigma \times I$ with endpoints lying in 
$(\partial \Sigma)\times \{ \frac{1}{2} \}$ such that it can be isotoped relative to its boundary to be in admissible 
position with respect to the polygonal decomposition $P_G$.  A collection of $h$ disjoint latches in the boundary of 
$1_\Sigma$ whose homotopy class relative to the boundary induces a free basis for $H_1(1_\Sigma,\partial 
1_\Sigma;\mathbb{Q})$ is a \emph{general system of latches} for $G$.

It is clear that substituting for $I_G$ in (\ref{nabla+}) any general system of latches yields an invariant of cobordisms.  
In fact,  such an invariant is also universal for homology cylinders.  We shall not make use of this result 
and omit the proof, which essentially follows $\S$\ref{sec:proofuniversalitynabla} (the main difference being in the 
definition of the surgery map).
\subsubsection{Extending $\tau_1$ via the invariant $\nabla$}
We restrict our attention to the once-bordered surface $\Sigma=\Sigma_{g,1}$ of genus $g$, set 
$H=H_1(\Sigma_{g,1};\mathbb{Z})$ and $H_\mathbb{Q}=H\otimes \mathbb{Q}$.  Recall \cite{johnson} that the first Johnson 
homomorphism
 \[ \tau_1\colon \mc{I}(\Sigma_{g,1}) \ra \Lambda^3 H \]
takes its values in the third exterior power of $H$.

Denote by $I_g$ the $2g$-component q-tangle in $\Sigma_{g,1}\times I$ represented below.
 \[ \textrm{\includegraphics{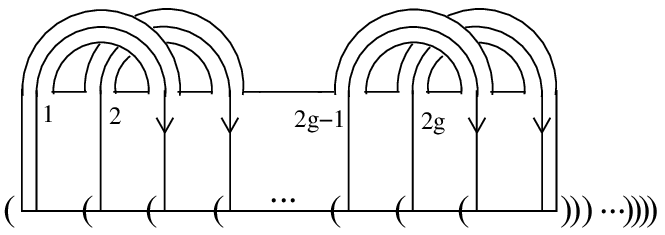}} \]
Note that by isotoping $I_g$  so that it is  contained in $(\Sigma_{g,1}\times \{1\})\cup(\partial\Sigma_{g,1}\times 
I)$, we may consider $I_g$ as a general system of latches for any choice of marked bordered fatgraph $G$ in
$\Sigma_{g,1}$.
Indeed, one can unambiguously  arrange the endpoints of $I_g$ so  that under the projection of $\Sigma_{g,1}\times I$ 
to $\Sigma_{g,1}$ they  lie in a neighborhood of the fixed point $q$ where the tail of $G$ is attached, so  that 
$\partial I_g$ lies on the boundary of the preferred box in the polygonal decomposition $P_G$.

Let $G$ be a marked bordered fatgraph in $\Sigma_{g,1}$ and let $L$ be a framed link in $\Sigma_{g,1}\times I$ which is 
disjoint from both $K_G$ and $I_g$.  
  Set
\begin{equation}\label{nablat}
\nabla^{G,I_g}_n(L):= \frac{\iota_n(\check{V}_G(L\cup K_G\cup I_g))}
{ \iota_n(\check{V}_G(U_{+}))^{\sigma^{L\cup K_G}_+} \iota_n(\check{V}_G(U_{-}))^{\sigma^{L\cup K_G}_-} }\in
\mathcal{A}_{\le n}(\uparrow^{2g}),
\end{equation}
where we make use of the notation of (\ref{nabla+}).  This quantity is an invariant of the surgered manifold 
$M=(\Sigma_{g,1}\times I)_L$, and following (\ref{nabla}), we set
\[ \nabla^{I_g}_G(M):= 1+\big( \nabla^{G,I_g}_1(L) \big)_{1} + ... + \big(\nabla^{G,I_g}_n(L) \big)_{n} +... \in 
\mathcal{A}(\uparrow^{2g}). \]

Next, consider a Whitehead move $W\colon G\mapsto G'$.  We can then compare the value of the invariants 
$\nabla^{I_g}_G$ and $\nabla^{I_g}_{G'}$ on the trivial element $1_{\Sigma_{g,1}}$ and assign the quotient to the 
Whitehead move $W$ to define a map
 \[ \mc{J}(W)= \nabla^{I_g}_{G'}(1_{\Sigma_{g,1}}) / \nabla^{I_g}_G(1_{\Sigma_{g,1}})\in \mathcal{A}(\uparrow^{2g}).   
 \]
More generally, for any two marked fatgraphs in $\Sigma_{g,1}$, not necessarily related by a Whitehead move, we can 
similarly take the quotient, and in the case that these two fatgraphs are equal, we get a trivial contribution by 
definition.  This guarantees that this map $\mc{J}$ is the identity for the involutivity, commutativity, and pentagon 
relations, and hence we obtain a representation
  \[ \mc{J}\colon \Pt(\Sigma_{g,1})\ra  \mc{A}(\uparrow^{2g}). \]

  Recall that the groups $H_1(\Sigma_{g,1};\bQ)$ and $H_1(\Sigma_{g,1},\partial\Sigma_{g,1};\bQ)$ are isomorphic via 
  \Poin duality.   Define a map ${\mathfrak h}:\{1,...,2g\}\rightarrow H$ by taking $i$ to the element of $H$ dual to 
  the class of the $i$th component of $I_g$ in  $H_1(\Sigma_{g,1},\partial\Sigma_{g,1};\bZ)$.  More concretely, if we  
  let $\{A_i,B_i\}_{i=1}^{2g}$ denote the standard symplectic basis of $\Sigma_{g,1}$ with $A_i\cdot B_j =\delta_{ij}$, 
  then
   \[
{\mathfrak h}(2k)=A_k,  \quad  {\mathfrak h}(2k-1)=B_k \quad  \textrm{ for } k=1,...,g.
\]
Also recall that $\mathcal{B}^\mathsf{Y}_1(2g)=\mathcal{B}^\mathsf{Y}_{1,3}(2g)$ is the space of $2g$-colored $\mathsf{Y}$-shaped Jacobi 
diagrams and that we have the well-known and elementary isomorphism $\mc{B}_1^\mathsf{Y}(2g)\cong \Lambda^3 H_\mathbb{Q}$ 
defined by sending a $\mathsf{Y}$-shaped diagram colored by $i,j,k$ (following the vertex-orientation) to ${\mathfrak 
h}(i)\wedge {\mathfrak h}(j)\wedge {\mathfrak h}(k)\in\Lambda ^3 H$.

In order to extend the first Johnson homomorphism $\tau_1$, we restrict the target of our representation $\mc{J}$ by 
composing it with the series of maps given by
 \begin{equation}\label{eq:chainofisos}
Y: \mc{A}(\uparrow^{2g})\ra \mc{B}(2g)\ra \mc{B}^\mathsf{Y}(2g)\ra\mc{B}_1^\mathsf{Y}(2g)\cong\Lambda^3 H_\mathbb{Q}.
 \end{equation}
 From this, we
 to obtain a representation of the Ptolemy groupoid
$$ \mc{J}^\mathsf{Y}\colon \Pt(\Sigma_{g,1})\ra  \Lambda^3 H_{\mathbb{Q}}. $$
The first map in (\ref{eq:chainofisos}) is the inverse $\chi^{-1}$ of the Poincar\'e--Birkhoff--Witt isomorphism, and 
the second and third maps are the natural projections.

\begin{theorem}\label{lifttau1}
The representation $\mc{J}^\mathsf{Y}$ extends the first Johnson homomorphism $\tau_1$ to the Ptolemy groupoid.
More precisely, given a sequence
 $$ G\xra{W_1} G_1\xra{W_2}...\xra{W_k}G_k=\varphi(G) $$
of Whitehead moves representing $\varphi\in \mathcal{I}_{g,1}$, we have $\tau_1(\varphi) = 4\sum_{i=1}^k 
\mc{J}^\mathsf{Y}(W_i)$.
\end{theorem}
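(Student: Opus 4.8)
The plan is to compute $\mc{J}^\mathsf{Y}(W)$ explicitly for a single Whitehead move $W\colon G\to G'$ and compare with the Morita--Penner cocycle of \cite{MP}. First I would observe that, because we are evaluating $\nabla^{I_g}_G$ on the trivial cobordism $1_{\Sigma_{g,1}}$, the only link present is $K_G\cup I_g$ (with $L=\emptyset$), so $\mc{J}(W)$ reduces to a purely fatgraph-theoretic quantity built from the AMR invariant of the tangle $K_G\cup I_g$ versus $K_{G'}\cup I_g$. Since $\mc{J}^\mathsf{Y}$ projects onto $i$-degree $1$, the only contributions that survive are struts and single $\mathsf{Y}$-vertices; all associators and all $\nu$-factors drop out (as in the lowest-degree computations of \S\ref{sec:proofunivnabla}), so the computation becomes combinatorial. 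The key point is that passing from $V_G$ to $V_{G'}$ changes the polygonal decomposition by a single Whitehead move, and in $i$-degree $1$ the resulting change in the AMR invariant of $K_G\cup I_g$ is governed entirely by how the longitudes $l_i$ and the latch arcs of $I_g$ cross within the two squares affected by the move.

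Next I would set up the bookkeeping using the $H$-marking formalism of \cite{MP}. A Whitehead move on a non-tail edge $e$ of $G$ is determined by the homology classes labeling the edges around $e$, and the Morita--Penner cocycle assigns to it an explicit element of $\Lambda^3 H$ built from these classes (roughly, a wedge of the classes of the two ``new'' sectors). On our side, the change $\mc{J}(W)=\nabla^{I_g}_{G'}(1)/\nabla^{I_g}_G(1)$ at $i$-degree $1$ is a single $\mathsf{Y}$-graph (plus struts that die under the projection $\mathcal{B}^\mathsf{Y}(2g)\to\mathcal{B}^\mathsf{Y}_1(2g)$, since struts have $i$-degree $0$ — wait, more precisely struts contribute to the strut part which is quotiented out in $\mathcal{A}_h=\mathcal{B}^\mathsf{Y}$; and the $i$-degree $\ge 2$ part is projected away by $Y$). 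Under the isomorphism $\mathcal{B}^\mathsf{Y}_1(2g)\cong\Lambda^3 H_\mathbb{Q}$, this $\mathsf{Y}$-graph becomes a wedge ${\mathfrak h}(i)\wedge{\mathfrak h}(j)\wedge{\mathfrak h}(k)$, and I would check that the three colors $i,j,k$ are exactly (dual to) the homology classes that appear in the Morita--Penner formula, up to the universal scalar $4$. The factor $4$ should come from combining: a factor $1/2$ from each of two chords $Z(X_\pm)=\exp(\pm\frac12\,\text{chord})$ feeding into the single trivalent vertex via the STU relation, reconciled against the normalization in \cite{MP}; I would pin this down by doing the genus-$1$ case by hand.

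Then I would promote the single-move computation to the statement about sequences: since $\mc{J}^\mathsf{Y}$ is a representation of $\Pt(\Sigma_{g,1})$ (composition-preserving, as already established in the text via the telescoping-quotient definition and the fact that it kills the involutivity, commutativity, and pentagon relations), we get $\sum_{i=1}^k \mc{J}^\mathsf{Y}(W_i)=\mc{J}^\mathsf{Y}(W_1\cdots W_k)=\nabla^{I_g}_{\varphi(G)}(1)/\nabla^{I_g}_G(1)$ evaluated through $Y$. By naturality of the construction under the mapping class group action (exactly as in the proof of the theorem in \S\ref{explicitlift}, using $\nabla^{G,I_g}_n(L)=\nabla^{\varphi(G),I_g}_n(\varphi(L))$ — here one must be slightly careful because $I_g$ is a fixed tangle rather than $\varphi$-equivariant, so one tracks the discrepancy, and that discrepancy is precisely $\tau_1(\varphi)$), this equals $\tau_1(\varphi)$ up to the scalar $4$. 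The same telescoping identity holds on the Morita--Penner side by \cite{MP}, so matching the two single-move formulas suffices.

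The main obstacle I expect is the precise identification of the single-Whitehead-move formula: making the combinatorics of ``which sectors of $G$ get crossed by the latches and longitudes, and with what sign'' line up exactly with the $H$-marking data and the Morita--Penner cocycle, including getting the overall constant $4$ and all signs right. This is where one genuinely has to draw the two squares involved in the Whitehead move, track the blackboard framing and orientations of $l_i$, $m_i$ and the latch arcs through the move, apply $\iota_1$ (which contributes the $(-1)^{k-1}(2k-1)!$-type factors — here just a sign at $k=1$), and compare term by term with \cite[\S...]{MP}; the bordered/once-bordered conventions and the choice of greedy maximal tree also enter and must be handled consistently with \cite{ABP}.
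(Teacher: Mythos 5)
There is a genuine gap in the mechanism you propose for the single--Whitehead-move computation, and it is the heart of the matter. You assert that ``all associators and all $\nu$-factors drop out'' (importing the remark from the lowest-degree computation in \S\ref{sec:proofunivnabla}) and that the $i$-degree $1$ change under $W\colon G\to G'$ is ``governed entirely by how the longitudes and the latch arcs cross within the two squares affected by the move,'' with the constant coming from chords $Z(X_\pm)=\exp(\pm\tfrac12\,\cdot)$ fed into a trivalent vertex via STU. This is not what happens: when $I_g\cup K_G$ is put in admissible position and the Whitehead move is performed, there are \emph{no crossing changes at all}. The only changes are (i) the bracketing of the strands in the affected box, e.g.\ from $(C,(B,A))$ to $((C,B),A)$, and (ii) possibly an extra cup or cap from the evolution of the forbidden sectors, which contributes nothing to the $\mathsf{Y}$-part by the computation of $\nu$. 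Consequently the entire $i$-degree $1$ contribution comes precisely from the associator — the term you discarded: for an even associator $\Phi=1+\tfrac{1}{24}\,\mathsf{Y}+(\text{$J$-degree}>3)$ as in (\ref{assocY}), one gets $\mc{J}^\mathsf{Y}(W)=\tfrac{1}{24}\,A\wedge B\wedge C$, which is one fourth of the Morita--Penner cocycle; this is where the factor $4$ in the theorem comes from, not from a $\tfrac12\cdot\tfrac12$ chord count (that heuristic would in any case not force the value $4$, and the paper's determination of the constant still leans on \cite{MP} for the statement that the cocycle computes $\tau_1$).

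A second missing ingredient is the justification that the system of linking pairs $K_G$ can be ignored in the $\mathsf{Y}$-part, which in the paper is Lemma \ref{air}: $Y\bigl(\nabla^{I_g}_G(1_{\Sigma_{g,1}})\bigr)=Y\bigl(\check V_G(I_g)\bigr)$. Your argument that ``struts die under the projection'' is not enough, because a longitude core could a priori carry extra legs that survive $\iota_2$ and produce $\mathsf{Y}$-shaped terms involving $K_G$; ruling this out requires the analysis of the Siamese diagrams (each meridian core carrying exactly four struts with coefficient $\tfrac{1}{4!}$, mapped by $\iota_2$ to $(-1)^2 4!$), together with the observation that one extra leg on a longitude forces a looped edge killed by AS, and more than one is excluded by the degree constraint in $\iota_2$. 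Your telescoping step and the appeal to the composition-preserving property of $\mc{J}$, as well as the overall plan of comparing the single-move value with the $H$-marking formula of \cite{MP}, are in line with the paper; but as written the core computation would not produce the correct local formula, so the proof does not go through without replacing the crossing-based mechanism by the associator-based one and supplying the analogue of Lemma \ref{air}.
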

\subsubsection{Proof of Theorem \ref{lifttau1}}
The computation of the invariant $\mc{J}^\mathsf{Y}$ is considerably simplified by the following observation.
\begin{lemma}\label{air}
For any marked bordered fatgraph $G$ in $\Sigma_{g,1}$, we have
 $$ Y\left(\nabla^{I_g}_G(1_{\Sigma_{g,1}})\right) = Y\left( \check{V}_G(I_g) \right)\in \Lambda^3 H_\mathbb{Q}, $$
where $Y$ is the sequence of maps in (\ref{eq:chainofisos}).
\end{lemma}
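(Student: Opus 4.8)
The plan is to show that the only part of $\nabla^{I_g}_G(1_{\Sigma_{g,1}})$ surviving under the map $Y$ of (\ref{eq:chainofisos}) is exactly the part recording the linking of $I_g$ with itself, which is precisely $\check V_G(I_g)$ up to terms killed by $Y$. First I would recall that by definition $\nabla^{G,I_g}_n(1_{\Sigma_{g,1}})$ is built from $\iota_n(\check V_G(K_G\cup I_g))$ divided by powers of $\iota_n(\check V_G(U_{\pm}))$, since the link $L$ is empty for the trivial cobordism. Since $Y$ projects onto $\mathcal{B}^{\mathsf Y}_1(2g)$, i.e., onto connected $i$-degree $1$ (tripod) diagrams with no strut components, it suffices to identify the degree-$1$ tripod part of $\nabla^{I_g}_G(1_{\Sigma_{g,1}})$ in $\mathcal{A}(\uparrow^{2g})$ after passing to $\mathcal{B}(2g)$ via $\chi^{-1}$.

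The key steps, in order, are as follows. First, observe that the denominator $\iota_n(\check V_G(U_+))^{\sigma_+}\iota_n(\check V_G(U_-))^{\sigma_-}$ contributes $1+(\text{terms of }i\text{-degree}\ge 1)$ up to a sign, and moreover its non-trivial part involves no $\uparrow^{2g}$-legs at all (the unknots are closed components disjoint from $I_g$), so it cannot contribute a connected tripod attached to the cores; hence it is irrelevant modulo the kernel of $Y$, exactly as the analogous computation in $\S$\ref{sec:proofunivnabla} showed the unknot factors contribute only a scalar at lowest degree. Second, analyze $\iota_n(\check V_G(K_G\cup I_g))$: the system $K_G$ of linking pairs consists of longitudes $l_i$ and $0$-framed meridians $m_i$; since $1_{\Sigma_{g,1}}$ carries the empty link, each meridian $m_i$ links only its longitude $l_i$ and nothing else, so as in the universality proof (the ``Siamese diagram'' discussion) the $\iota$-map forces the $m_i$- and $l_i$-cores to be eliminated together, contributing only scalars and never producing a tripod attached to the $I_g$-cores. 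Third, conclude that the only source of an $i$-degree $1$ connected diagram with all three legs on the $\uparrow^{2g}$ cores is a trivalent vertex arising from the mutual linking/Kontsevich integral of the three strands of $I_g$ meeting in admissible position — i.e., precisely the tripod part of $\check V_G(I_g)$; all associator and $\nu$ contributions are $i$-degree $\ge 1$ corrections that either have too many legs or get projected away (cf.\ the remark that projecting away non-struts of $\chi(\nu)$ gives zero, and that $\nu$ is antipode-invariant), and struts are killed by the projection to $\mathcal{B}^{\mathsf Y}$. Fourth, assemble these observations: modulo $\ker Y$, we have $\chi^{-1}(\nabla^{I_g}_G(1_{\Sigma_{g,1}})) \equiv \chi^{-1}(\check V_G(I_g))$ in $i$-degree $1$, whence $Y(\nabla^{I_g}_G(1_{\Sigma_{g,1}}))=Y(\check V_G(I_g))$.

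The main obstacle I expect is bookkeeping the precise claim that \emph{no} looped or mixed term involving $K_G$ or $I_g$ self-linking can contribute a connected tripod on the cores after applying $\iota$ and then projecting onto $\mathcal{B}^{\mathsf Y}_1(2g)$: one must rule out, for instance, a configuration where two legs of a would-be tripod come from $I_g$ and a third is routed through a meridian, or where a short loop on $I_g$ produces an $i$-degree $1$ graph with fewer than three legs on distinct cores. Here I would lean on the same degree/leg-counting argument used in $\S$\ref{sec:proofunivnabla} — each $\iota_n$ summand must have exactly $2n$ vertices on each relevant circle, the meridian cores can only carry struts to their longitudes, and a connected $\mathsf Y$-diagram has exactly three univalent vertices — together with the fact that $I_g$ is chosen to induce a basis of $H_1(1_{\Sigma_{g,1}},\partial;\mathbb{Q})$, so its linking data with $K_G$ is, after normalization, trivial in the relevant degree. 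Once these combinatorial exclusions are in place, the identity is immediate.
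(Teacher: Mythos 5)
Your proposal follows essentially the same route as the paper's proof: discard the unknot normalization, show that the system $K_G$ of linking pairs contributes only ``Siamese''-type configurations, and conclude that the tripod part comes from $I_g$ alone. The combinatorial exclusion you flag as the main obstacle (a tripod leg routed through a longitude or meridian core) is settled exactly by the counting you sketch: for the relevant map $\iota_2$, each meridian core must carry exactly four vertices, all of them ends of struts to its own longitude (this uses the structure of the Kontsevich integral of a linking pair, \cite[Theorem 4]{BNGRT2}), so the longitude core is already saturated and cannot also receive a strut to a latch core or a leg of the unique trivalent vertex; the residual looped-edge configuration dies by AS, which is precisely the argument given in the paper's proof of Lemma \ref{air} and borrowed from $\S$\ref{sec:proofunivnabla}.

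The one substantive omission is the normalization. Since the lemma asserts an equality in $\Lambda^3 H_\mathbb{Q}$, not a proportionality, ``contributing only scalars'' is not enough: you must check that the scalar attached to each surviving Siamese configuration is exactly $1$, i.e.\ that the coefficient $\frac{1}{4!}$ coming from the exponential of struts cancels the factor $(-1)^2\, 4!$ which $\iota_2$ assigns to a Siamese diagram (this is the computation recalled from $\S$\ref{sec:proofunivnabla}), and likewise that the denominator has leading coefficient $+1$: the paper quotes $\iota_2(\check{Z}(U_\pm))=1+(\textrm{terms of $i$-degree}\ge 2)$ from \cite[pp.~283]{O}, which is sharper than your ``up to a sign, irrelevant modulo $\ker Y$'' --- a global sign $-1$ would not lie in $\ker Y$ and would flip the answer. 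With these two explicit checks added, your argument is complete and coincides with the paper's.
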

\noindent In other words, the $\mathsf{Y}$-shaped part of $\nabla^{I_g}_G(1_{\Sigma_{g,1}})$ comes purely from the 
tangle $I_g$, and the system of linking pairs $K_G$ can simply be ignored in the computation.
\begin{proof}
We shall freely make use of the terminology introduced in the proof of Theorem \ref{thm:univ}.
In computing $\nabla^{I_g}_G(1_{\Sigma_{g,1}})$, we can choose $L$ to be empty in (\ref{nablat}).
By \cite[pp.\ 283]{O}, we have that $\iota_2(\check{Z}(U_\pm))=1+$terms of $i$-degree $\ge 2$, and it follows that
 $$ Y\left(\nabla^{I_g}_G(1_{\Sigma_{g,1}})\right) = Y\left( \iota_2(\check{V}_G(K_G\cup I_g)) \right). $$

We now consider the linking pairs $K_G$. We may assume that there are $2g$ disjoint $3$-balls in $1_{\Sigma_{g,1}}$ 
that intersect the system $K_G$ of linking pairs as illustrated on the right-hand side of Figure \ref{main}.  The 
Kontsevich integral of the tangle contained in these balls is computed in \cite[Theorem 4]{BNGRT2}, from which it 
follows that the only terms in $\check{V}_G(K_G\cup I_g)$ that can contribute to 
$Y\left(\nabla^{I_g}_G(1_{\Sigma_{g,1}})\right)$ have exactly $4$ vertices on each meridian core, which are the ends of 
$4$ parallel struts connecting each to the corresponding longitude core.

Suppose that some longitude core has $k$ additional vertices attached.  It follows from the definition that applying 
the map $\iota_2$ produces $\ge k$ univalent vertices, which imposes the constraint that $k\le 1$.
For $k=1$, the diagram is also sent to zero by the map $\iota_2$ since we obtain a sum of Jacobi diagrams each having a 
looped edge, which vanish by the AS relation.  Thus, the only terms which can possibly contribute are Siamese diagrams 
with $4$ struts, cf.\ Figure \ref{AMRclasper}, which come with a coefficient $\frac{1}{4!}$.
As seen in $\S$\ref{sec:proofunivnabla}, $\iota_2$ maps each Siamese diagram to a factor $(-1)^2 4!$ as required.
\end{proof}

We can now proceed with the proof of Theorem \ref{lifttau1} and  calculate the representation $\mc{J}^\mathsf{Y}$ on a Whitehead 
move $W$.
To this end, for any marked fatgraph $G$ in $\Sigma_{g,1}$, we can assume by Lemma \ref{comb} that the q-tangle $I_g$ 
is in admissible position and intersects each box except the preferred one in a trivial q-tangle.  For each oriented edge of $G$, we may equip each strand of the trivial q-tangle in the corresponding box with a sign, according to whether 
its orientation agrees (plus sign) or disagrees (minus sign) with the specified one.
For each oriented edge of $G$, assign an element of $H$ to each box except the preferred one as follows: use the map $\mathfrak{h}$ to label all 
the strands of $I_g$ intersecting the box by elements of the symplectic basis $\{A_i,B_i\}_{i=1}^{2g}$ and take the 
signed sum of these labels in $H$.  We remark that this assignment is precisely an $H$-marking as described in \cite{MP,BKP}.

Thus, we have a situation as in the upper part of Figure \ref{wmoveamr}, where each of the three strands depicted there 
represents a collection of parallel strands of $ I_g\cup K_G$ and where $A,B,C\in H$ are the labels of the box as just 
explained.  Note that the bracketing $(C,(B,A))$ in the bottom-left box is imposed by the condition on hexagons, see 
$\S$\ref{polydec}.
 \begin{figure}[h!]
 \includegraphics[height=3in]{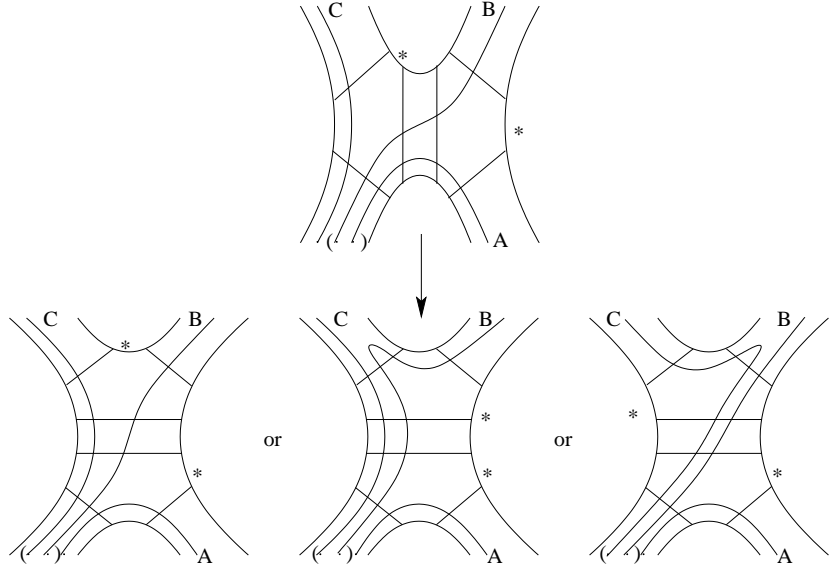}
 \caption{The three possible evolutions of the forbidden sectors under a Whitehead move. }\label{wmoveamr}
 \end{figure}
After the Whitehead move, we have one of the three situations represented in the lower part of Figure \ref{wmoveamr} 
depending on the ordering of the sectors associated to the edge on which the move has been performed.  In each case, we 
see that the bracketing of the three strands in the bottom left box is changed to $((C,B),A)$.  Also, in the last two 
cases, we get an extra cap or cup due to the evolution of the forbiden sectors, and these are the only changes; in 
particular, there are no crossing changes.

It follows from the computation \cite{BNGRT2} of $\nu$ that a cup or a cap cannot contribute to $\mc{J}^\mathsf{Y}$, so in all 
three cases, we get the same value for $\mc{J}^\mathsf{Y}(W)$ coming from the evolution in the bracketing, i.e., from the 
associator.
Recall that an even associator is always of the form
\begin{equation}\label{assocY}
\Phi=1+ \frac{1}{24}\ftt{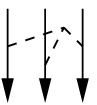}+ \textrm{terms of $J$-degree $>$ 3}.
\end{equation}
Furthermore, the value of the Kontsevich integral on a q-tangle obtained from $\Lambda_+$ (see Figure \ref{elementary}) 
by taking parallel copies of each strand with arbitrary orientation is obtained from $\Phi$ by the comultiplication and 
antipode maps defined in $\S$\ref{opjac}.

By Lemma \ref{air} and Equation \ref{assocY}, we find $\mc{J}^\mathsf{Y}(W) = \frac{1}{24} A\wedge B\wedge C\in \Lambda^3 H_{\mathbb{Q}}$, and this 
formula coincides with one fourth of the Morita-Penner extension of the first Johnson homomorphism $\tau_1$.  The fact 
that it indeed is a multiple of an extension of $\tau_1$ follows as in  \cite{MP}, upon which our determination of the 
factor $4$ currently relies, and completes the proof of Theorem \ref{lifttau1}.
\section{Concluding remarks and questions}\label{conclusions}
There are several obvious questions regarding the  Ptolemy representations derived in Section \ref{sec:ptolemy}.  Most 
notably, one may ask for a geometric interpretation of the mapping class group action arising from the representation 
$\mc{J}$.  In particular, does $\mc{J}$ provide an extension of the full LMO invariant in the same TQFT spirit as in 
Theorem \ref{thm:lmo}?

Also, a natural and interesting issue is the faithfulness of the action of the mapping class group $MC(\Sigma_{g,1})$
on $\mc{A}_{2g}$
induced by $\mc{J}$.  As the groupoid 
formulas for these representations seem simpler to analyze than their corresponding  mapping class group expressions, 
our techniques here may prove pivotal in providing such an answer.   The facts that the pronilpotent representation of 
an automorphism of a free group is faithful and that the Johnson theory presumably corresponds to the tree-like part of LMO 
by \cite{GL,Habegger,massuyeau} together suggest that the induced representation of the mapping class group may be
faithful.

\subsection{Magnus expansions and Johnson homomorphisms}
In recent beautiful work, 
 Gw\'ena\"el Massuyeau \cite{massuyeau} has introduced the notion of symplectic Magnus expansions and proved their 
 existence by giving explicit formulas in terms of the LMO invariant.
Nariya Kawazumi \cite{kawazumi-private} has asked the interesting question if such Magnus expansions might be computed 
directly in terms of suitably marked fatgraphs as in \cite{BKP}.
Our computation here of the LMO invariant provides
such a formula but again a very complicated one.
 Moreover, it seems likely that a construction analogous  to Massuyeau's  using our invariants $\nabla_G$ 
 or $\nabla_G^{I_g}$ will lead to 
 a directly computable version, and it would be an  interesting prospect  to derive formulas for the various Johnson 
 homomorphisms in terms of such a  symplectic Magnus expansion.

\subsection{Relation to triangulations of 3-manifolds}

The dual in a surface $\Sigma$ of a marked uni-trivalent fatgraph $G$ is a triangulation $\Delta_G$ of the surface 
$\Sigma$, where a $k$-valent vertex of $G$ gives rise to a $2k$-gon whose alternating sides correspond to incident 
half-edges and whose complementary sides correspond to arcs in the boundary, cf.\ \cite{penner, Penner04}.  The dual of 
a Whitehead move on a uni-trivalent fatgraph corresponds to a diagonal exchange on its dual ideal triangulation as illustrated in 
Figure \ref{fig:WMove}.  We may imagine this diagonal flip as exchanging the front and the back pair of faces of a 
tetrahedron in the obvious way.
It is thus natural to regard a morphism in the Ptolemy groupoid as a sequence of adjoined tetrahedra starting from the 
corresponding fixed ideal triangulation $\Delta$ of the surface, i.e., a morphism provides  a triangulated cobordism 
between one copy of the surface with triangulation $\Delta$ and another copy of the surface with potentially another 
triangulation.  This is especially natural for a mapping cylinder, where the Ptolemy morphism connects $\Delta$ to its 
image under the corresponding mapping class; this has indeed been the point of view in \cite{MP,BKP}.

Conversely, suppose that we have ideal triangulations of two bordered surfaces $\Sigma$ and $\Sigma'$ and suppose that 
$M$ is a 3-manifold whose boundary contains $\Sigma\sqcup\Sigma '$.  We may ask for a triangulation of $M$ extending 
those given on the boundary all of whose vertices lie
in $\Sigma\sqcup\Sigma '$.  In the spirit of a TQFT, we are led to the following questions.
Do finite compositions of Whitehead moves acting as before 
on triangulated cobordisms in fact act transitively on such triangulations of 
$M$?  Which 3-manifold invariants can be computed that depend upon the ideal triangulations of  $\Sigma\sqcup\Sigma '$ 
but not the triangulation of $M$?
What type of state-sum model corresponds to this?

\subsection{The original AMR invariant}

As pointed out in $\S$\ref{amr}, the AMR invariant $V_G$ employed in the construction of $\nabla _G$ is actually only a 
weak version of the one in \cite{AMR}.  Indeed, we are post-composing the original invariant with the map that forgets 
the homotopy class of chord diagrams on surfaces. It is a natural and important problem to try to build a $3$-manifold 
invariant from the full Andersen-Mattes-Reshetikhin invariant that would retain this homotopy information and thus 
non-trivially extend finite type invariants to all 3-manifolds.
We shall return to this study in a forthcoming paper, where we also discuss how constructions inspired by those in this 
paper can be used to define universal perturbative invariants of closed 3-manifolds and more generally universal 
perturbative TQFTs.

Finally note that in the proof of Theorem \ref{lifttau1}, computations were made amenable by Lemma \ref{air} in 
avoiding the complex maps $\iota_n$ from LMO theory, i.e., our calculation of $\tau _1$ is performed at the ``AMR level'' rather than at the ``LMO level'', cf.\ \cite{Habegger}.  The AMR-valued version of our invariant, or its homotopy 
analogue just discussed, may be suited to other explicit computations as well.
Indeed, the original AMR invariant provides a graded isomorphism between the Vassiliev-filtered free vector space 
generated by links in the cylinder over a surface with a non-empty boundary and the algebra of chord diagrams on the 
surface \cite{AMR}, and this isomorphism is determined once a suitable fatgraph is chosen in the surface as discussed here. We therefore 
get an action of the Ptolemy groupoid on the algebra of chord diagrams on any surface with non-empty boundary just as 
in $\S$\ref{explicitlift}. We shall study this representation of the Ptolemy groupoid in a forthcoming publication.

\vspace{0.5cm}
\end{document}